\DeclareSymbolFont{cyrletters}{OT2}{wncyr}{m}{n}
\DeclareMathSymbol{\Beh}{\mathalpha}{cyrletters}{"42}
\DeclareMathSymbol{\beh}{\mathalpha}{cyrletters}{"62}
\DeclareMathSymbol{\Sha}{\mathalpha}{cyrletters}{"58}
\newcommand{\N}{\mathbb{N}}
\newcommand{\Z}{\mathbb{Z}}
\newcommand{\Q}{\mathbb{Q}}
\newcommand{\R}{\mathbb{R}}
\newcommand{\C}{\mathbb{C}}
\newcommand{\F}{\mathbb{F}}
\newcommand{\G}{\mathbb{G}}
\newcommand{\A}{\mathbb{A}}
\DeclareMathOperator{\Hom}{Hom}
\DeclareMathOperator{\Pic}{Pic}
\DeclareMathOperator{\Gal}{Gal}
\DeclareMathOperator{\Proj}{\mathbb{P}}
\DeclareMathOperator{\Spec}{Spec}
\DeclareMathOperator{\Br}{Br}
\newtheorem{theorem}{Theorem}[section]
\newtheorem{proposition}[theorem]{Proposition}
\newtheorem{corollary}[theorem]{Corollary}
\newtheorem{lemma}[theorem]{Lemma}
\theoremstyle{definition}
\newtheorem{definition}[theorem]{Definition}
\newtheorem{example}[theorem]{Example}
\newtheorem{remark}[theorem]{Remark}
\newtheorem{construction}[theorem]{Construction}
\title{The Brauer-Manin obstruction for stacky curves}
\begin{document}
\author{Tim Santens}
\email{Tim.Santens@kuleuven.be}
\address{KU Leuven, Departement wiskunde, Celestijnenlaan 200B, 3001 Leuven,
 Belgium}
\subjclass[2020]{14G12 (Primary), 11G30, 14A20, 14G25 (Secondary)}
\keywords{Stack, integral points, Hasse principle, Brauer-Manin obstruction, elementary obstruction.}
\thanks{The author is supported by FWO-Vlaanderen (Research Foundation-Flanders) with grant number 11I0621N}
\begin{abstract}
    We show that the Brauer-Manin obstruction is the only obstruction to strong approximation for all stacky curves over global fields with finite abelian fundamental groups. This includes all stacky curves of genus $g = \frac{1}{2}$, thus explaining a recent counterexample to the Hasse principle of Bhargava-Poonen. We will furthermore show that the elementary obstruction is the only obstruction to the integral Hasse principle for smooth proper integral models of stacky curves of genus $g < 1$. We then compute the Brauer-Manin obstruction for smooth proper integral models of stacky curves of genus $\frac{1}{2}$.
\end{abstract}
\maketitle
\tableofcontents
\section{Introduction}
Let $k$ be a global field (a finite extension of $\Q$ or $\F_p(T)$). If $X$ is a variety over $k$ then a necessary condition for $X(k)$ to be non-empty is that $X(k_v) \neq \emptyset$ for all places $v$ of $k$. If this is also sufficient we say that $X$ satisfies the \emph{Hasse principle}.

Recently there has been interest in considering classical arithmetic questions such as the one above  when $X$ is an algebraic stack instead of a variety. In particular, Bhargava and Poonen \cite{Bhargava2020Stacky} have shown that the Hasse principle for a \emph{stacky curve} holds if the genus is $g < \frac{1}{2}$. And they also gave an explicit example of a stacky curve of genus $g = \frac{1}{2}$ which fails the Hasse principle

If $X$ has a dense open subscheme, for example if $X$ is a stacky curve, then $X(k)$ is almost equal to the set of rational points $X_{\text{coarse}}(k)$ on its coarse moduli space $X_{\text{coarse}}$, which is a scheme. We will instead look at \emph{integral points}.

Let $U$ be a Dedekind scheme with fraction field $k$, this is either a smooth curve over a finite field or the spectrum of the ring $\mathcal{O}_{k, S}$ of $S$-integers for a finite set of places $S$ of a number field $k$. An \emph{integral model} $\mathcal{X}$ of $X$ is a separated finite type algebraic stack $\mathcal{X}$ over $U$ equipped with an isomorphism between the generic fiber $\mathcal{X}_k$ and $X$. The set of integral points on $\mathcal{X}$ is $\mathcal{X}(U)$. We remark that if $\mathcal{X}$ is proper over $U$ then we do not necessarily have $\mathcal{X}(U) = X(k)$, unlike the case when $\mathcal{X}$ is a scheme.

A necessary condition for the non-emptiness of $\mathcal{X}(U)$ is that $\mathcal{X}(\mathcal{O}_v) \neq \emptyset$ for all places of $k$ which are defined by points of $U$ and $X(k_v) \neq \emptyset$ for all other places. If this condition is also sufficient then we say that $\mathcal{X}$ satisfies the \emph{integral Hasse principle}. 

Although the study of the arithmetic of stacky curves is novel the study of their integral points is not. Indeed, solutions of the generalized Fermat equation $A x^p + B Y^q = C Z^r$ with $X,Y,Z$ coprime can be interpreted as points on a certain stacky curve. This viewpoint is exploited by Poonen, Schaeffer and Stoll in \cite{Poonen2007Twists} to study the solutions of $X^2 + Y^3 = Z^7$. Furthermore, Darmon and Granville's work \cite{Darmon1995Equations,Darmon1997Faltings} can be interpreted as the statement that Faltings's theorem holds for stacky curves, i.e. stacky curves of genus $g > 1$ have only a finite number of points. We also mention work of Beukers \cite{Beukers1998Diophantine} which can be done with less explicit computations using stacky methods, the parametrisations in his work correspond to torsors under the geometric fundamental group. The author will expand on these connections in future work.

Recently Mitankin, Nakahara and Streeter \cite{Mitankin2022Semi-Integral} studied the Brauer-Manin obstruction for Darmon points on certain quadric orbifolds. The Darmon points on a variety are by definition almost the same as the integral points on a certain root stack, and their orbifold Brauer group is the Brauer group of the corresponding root stack \cite[Rem. 3.12]{Mitankin2022Semi-Integral}. The Brauer-Manin obstruction on such a quadric orbifold is behaves rather different than the Brauer-obstruction on stacky curves, the Brauer group modulo constants is finite for quadric orbifolds but it is often infinite for stacky curves.

We also mention that there recently has been significant work in proving analogues of the Manin conjecture for modular curves, some of which are stacky curves, see \cite{Philips2022Rational} and the references therein. The leading constant, known as Peyre's constant, in the Manin conjecture for varieties has the order of the Brauer group modulo consants as a factor. It would be interesting to understand what role the Brauer group plays in the analogue of Peyre's constant for stacky curves of genus $g < 1$. In all the cases for modular curves where the leading constant is known the Brauer group only contains constant elements. In \cite{Nasserden2021Heights} the Manin conjecture is studied for a stacky curve $X$ over $\Q$ with signature $(0; 2,2,2)$ which has non-constant elements in its Brauer group, but they are unable to obtain a leading constant. Note that the Brauer group modulo constants of this Brauer group is infinite so the role of the Brauer group in Peyre's constant cannot be the obvious generalization.

A single stack can have many different integral models. But we can describe the subset $\mathcal{X}(U) \subset X(k)$ (it is a subset because $\mathcal{X}$ is separated) using purely local conditions, it is the pre-image of the integral adelic points $\mathcal{X}(\A_U)$ under the map $X(k) \to X(\A_k)$ \cite[Lemma 13]{Bhargava2020Stacky}. To study the Hasse principle on all integral models it thus suffices to study the image of $X(k)$ in $X(\A_k)$.

Two important explanations why a variety fails the Hasse principle are the \emph{Brauer-Manin obstruction} and the \emph{finite descent obstruction} \cite{Stoll2007FiniteDescent}. The goal of this paper is to investigate the Brauer-Manin obstruction in the case of stacky curves. One of the main results is.
\begin{theorem}
Let $X$ be a tame stacky curve of genus $g < 1$.
\begin{enumerate}
    \item The finite descent obstruction is the only obstruction to strong approximation. More precisely there exists a torsor $\pi:Y \to X$ under a finite \'etale group scheme such that descent along $\pi$ is the only obstruction to strong approximation, i.e. the image of $X(k)$ is dense in $X(\A_k)^{\pi} \subset X(\A_k)$.
    \item If $X$ has a finite abelian geometric fundamental group then the only obstruction to strong approximation is the Brauer-Manin obstruction, i.e. the image of $X(k)$ is dense in the Brauer-Manin set $X(\A_k)^{\Br} \subset X(\A_k)$.
\end{enumerate}
\label{Brauer-Manin obstruction is only obstruction to strong approximation}
\end{theorem}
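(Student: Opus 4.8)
The plan is to reduce the statement, via descent, to approximation on the geometric universal cover of $X$, exploiting that this cover is a genus‑$0$ object precisely because $g<1$. By the genus formula one has $g\ge g_0$, where $g_0$ is the genus of the coarse space $X_{\mathrm{coarse}}$, so $g<1$ forces $g_0=0$; since an adelic point on $X$ makes $X_{\mathrm{coarse}}$ everywhere locally soluble, hence (Hasse--Minkowski) isomorphic to $\mathbb{P}^1$, we may assume $X_{\mathrm{coarse}}=\mathbb{P}^1$. Then $\pi_1^{\mathrm{geo}}(X)$ is a spherical orbifold group, hence finite, and geometrically $X$ is one of: $\mathbb{P}^1$; a weighted projective line $\mathbb{P}(a,b)$ (the spindles and footballs with coprime cone orders, which are simply connected); or a quotient $[\mathbb{P}^1/G]$ with $G\subset\mathrm{PGL}_2$ finite (the $(n,n)$‑footballs and the spherical triangle orbifolds). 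Being Galois‑canonical, the orbifold universal cover descends to a $k$‑stack $Y$ together with a torsor $\pi\colon Y\to X$ under the finite étale $k$‑group scheme $G:=\underline{\operatorname{Aut}}(Y/X)$, a $k$‑form of $\pi_1^{\mathrm{geo}}(X)$. Thus $Y$ is itself either a conic or a weighted projective line, and, since $\pi$ dominates every finite étale covering of $X$, the descent set $X(\A_k)^{\pi}$ is the finest one, so it suffices to analyse this single torsor.

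\emph{Approximation on the base objects.} I would next prove that $Y$, and each of its twists, satisfies strong approximation for the adelic topology it carries as a stack. If $Y$ is a conic with an adelic point, then $Y\cong\mathbb{P}^1$ is smooth and proper, so every local point is integral for the smooth proper model at all but finitely many places, the stacky adelic space is $\prod_v Y(k_v)$ with the product topology, and strong approximation is just weak approximation on $\mathbb{P}^1$. If $Y=\mathbb{P}(a,b)$, I would use the presentation $Y=[(\A^2\setminus 0)/\mathbb{G}_m]$ compatibly with integral models: a local point of $Y$ lifts to $\A^2\setminus 0$ over every $\mathcal{O}_v$ because $\Pic(\mathcal{O}_v)=0$, while $\A^2\setminus 0$ — being $\A^2$ with a codimension‑$2$ locus removed — satisfies strong approximation unconditionally (as do the finitely many $\Pic(\mathcal{O}_{k,S})$‑twisted forms of it that arise as global lifts); pushing a rational approximation back down gives strong approximation on $Y$. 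The root‑stack data at the stacky points and the conditions at the archimedean and finitely many bad places enter here as open or congruence conditions compatible with all of this.

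\emph{Assembling parts (1) and (2).} For (1) I would run the descent fibration for Deligne--Mumford stacks, in the form $X(\A_k)^{\pi}=\bigcup_{[\sigma]\in H^1(k,G)}\pi^{\sigma}\bigl({}^{\sigma}Y(\A_k)\bigr)$ over twists. Each ${}^{\sigma}Y$ is again a conic or a twisted weighted projective line, and if it has an adelic point it has a $k$‑point (Hasse--Minkowski for its coarse conic, plus the fact that a $k$‑point of the coarse curve lifts to a weighted projective line); so by the previous step $\overline{{}^{\sigma}Y(k)}={}^{\sigma}Y(\A_k)$, and applying $\pi^{\sigma}$ and unioning gives $X(\A_k)^{\pi}\subseteq\overline{X(k)}$, while $X(k)\subseteq X(\A_k)^{\pi}$ is formal. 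For (2), with $G$ abelian, I would identify $X(\A_k)^{\pi}$ with the Brauer--Manin set: on one hand $X(\A_k)^{\Br}\subseteq X(\A_k)^{\Br_1}=X(\A_k)^{\pi}$ by the stacky analogue of the comparison between abelian descent and the algebraic Brauer group $\Br_1$ (in the style of Harari--Skorobogatov) together with $\pi$ being finest; on the other hand, since $\Br(\bar Y)=\Br(\mathbb{P}^1_{\bar k})=0$ and likewise $\Br$ of a weighted projective line over $\bar k$ vanishes, every class pulled back from $\Br(X)$ to a twist ${}^{\sigma}Y$ is constant, hence pairs trivially with any adelic point lifting to ${}^{\sigma}Y$, giving $X(\A_k)^{\pi}\subseteq X(\A_k)^{\Br}$. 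As the genus‑$\tfrac12$ stacky curves are exactly the $(2,2)$‑footballs, with $\pi_1^{\mathrm{geo}}=\mathbb{Z}/2$, they are covered.

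\emph{Main obstacle.} The bulk of the work — and the point I expect to be hardest — is foundational: one must set up torsors under finite group schemes, their twists, the descent fibration and the comparison with the Brauer group for Deligne--Mumford stacks over global fields, including a usable notion of the integral/adelic topology at stacky points and the continuity of $\pi^{\sigma}$ with image inside $X(\A_k)$; and one must keep control of the Brauer group throughout even though $\Br(X)/\Br(k)$ is typically infinite, so that the Brauer--Manin set is still closed and the comparison in part (2) is legitimate. A secondary difficulty is the weighted‑projective base case, where the $\mathbb{G}_m$‑quotient presentation must be reconciled with $\Pic$ of the ring of $S$‑integers.
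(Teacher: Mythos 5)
Your overall architecture matches the paper's: reduce to a universal cover $\pi:Y\to X$ under the finite group scheme $\pi_1(X_{k_s})$, prove strong approximation for the simply connected covers and all their twists, and then combine the descent fibration with a Colliot-Th\'el\`ene--Sansuc/Harari--Skorobogatov style comparison between abelian descent and $\Br_1$ to get part (2). However, there is a genuine gap at the step that carries the real content.

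Your base case for the weighted projective line rests on the claim that $\A^2\setminus\{0\}$ ``satisfies strong approximation unconditionally,'' i.e.\ with no place removed. This is false: the integrality conditions at all finite places of $\Q$ cut out the coprime pairs in $\Z^2$, which are discrete in $\R^2$, so $k^2\setminus\{0\}$ is not dense in $(\A^2\setminus\{0\})(\A_k)$. What is true is strong approximation away from one place, and that is exactly why the earlier result of Christensen only gives strong approximation for $\Proj(n,m)$ off a single place. The paper's Theorem \ref{Simply connected curves satisfy strong approximation} closes this gap not by approximating on $\A^2\setminus\{0\}$ alone but by exploiting the $\G_m$-action on the fibers of $\A^2\setminus\{0\}\to\Proj(n,m)$: one only needs a rational point in the $\G_m$-orbit of the prescribed adelic lift, and the proof introduces auxiliary places $w_0,w_1$ and rescales by carefully chosen $z_1, z_2^m$ (using the weights $n,m$) to absorb the failure of integrality at those places. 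Without this rescaling argument your proof establishes only the weaker ``away from one place'' statement, which does not suffice for the theorem as stated (density in $X(\A_k)^{\pi}$ with no place removed), nor for Corollary \ref{Brauer-Manin obstruction is only obstruction to the integral Hasse principle for stacky curves with abelian fundamental group}.

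A secondary point: you assert that the universal cover descends to a torsor under a finite \'etale $k$-group scheme because it is ``Galois-canonical.'' Canonicity gives descent of $Y$ as a stack, but producing a torsor under a group scheme defined over $k$ requires extending the identity cocycle on $\pi_1(X_{k_s})$ to $\pi_1(X)$; the paper does this (Lemma \ref{Universal covers exist if there is a rational point}) using a rational point of $X$, which exists because $X_{\text{coarse}}\cong\Proj^1_k$ has a $k$-point outside the finite stacky locus. You have the needed ingredient in hand but the step as written is not justified. The remaining components of your plan (Hasse--Minkowski for the twists, vanishing of $\Br$ of the geometric covers, the Poitou--Tate comparison for abelian $G$) agree with the paper's Propositions \ref{Brauer-Manin obstruction is contained in descent obstruction} and Theorem \ref{Strong approximation for quotients} and are sound.
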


We will recall the definition of $X(\A_k)^{\pi}$ and $X(\mathbb{A}_k)^{\Br}$ in \S 4 and the topology we consider on $X(\A_k)$ is the one constructed by Christensen \cite{Christensen2020Topology}. 

If $X$ is a simply connected tame stacky curve then $\Br X = \Br k$ so there is no Brauer-Manin obstruction.
\begin{theorem}
Simply connected tame stacky curves $X$ over $k$ satisfy strong approximation, i.e. the image of $X(k)$ in $X(\A_k)$ is dense.
\label{Strong approximation for simply connected stacky curves}
\end{theorem}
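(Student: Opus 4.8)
The plan is to obtain Theorem~\ref{Strong approximation for simply connected stacky curves} as an immediate consequence of Theorem~\ref{Brauer-Manin obstruction is only obstruction to strong approximation}(2). First I would check that the hypotheses of the latter are met. A simply connected stacky curve has trivial geometric fundamental group, which is in particular finite abelian, so the only thing to verify is the genus bound $g<1$. But a tame stacky curve with $g\geq 1$ is never simply connected: if its coarse space has genus $\geq 1$ then that coarse curve, a quotient of the stacky curve, already has infinite fundamental group; and if the coarse space is $\mathbb{P}^1_{\bar{k}}$ then $g\geq 1$ forces $\sum_i(1-1/e_i)\geq 2$ over the stacky points, so the underlying orbifold has non-positive Euler characteristic and hence infinite (geometric) fundamental group. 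Thus every simply connected tame stacky curve has $g<1$, and Theorem~\ref{Brauer-Manin obstruction is only obstruction to strong approximation}(2) applies, giving that the image of $X(k)$ is dense in $X(\A_k)^{\Br}$.

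It then remains to identify $X(\A_k)^{\Br}$ with $X(\A_k)$. For this I would use the fact recorded just above the statement, that $\Br X=\Br k$ for a simply connected tame stacky curve: every Brauer class on $X$ is the pullback of a constant class $\alpha\in\Br k$, and for an arbitrary adelic point $(x_v)\in X(\A_k)$ the sum of local invariants $\sum_v\mathrm{inv}_v(x_v^{*}\alpha)=\sum_v\mathrm{inv}_v(\alpha)$ vanishes by the reciprocity law of global class field theory. Hence every adelic point is orthogonal to $\Br X$, i.e.\ $X(\A_k)^{\Br}=X(\A_k)$, and combining this with the previous paragraph yields the density of the image of $X(k)$ in $X(\A_k)$. (If $X(\A_k)=\emptyset$ there is nothing to prove, and this case is subsumed by the same argument.)

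So the proof is formal once the two inputs are in place, and the real content lives inside them. Theorem~\ref{Brauer-Manin obstruction is only obstruction to strong approximation} is proved earlier, so the step I expect to require genuine work is the equality $\Br X=\Br k$; I would extract this from a general description of the Brauer group of a tame stacky curve in terms of the Galois action on its fundamental group and Picard group, together with the vanishing of $\Br X_{\bar{k}}$ and $H^1(k,\Pic X_{\bar{k}})$ in the simply connected case. An \emph{alternative} that avoids the Brauer group is to invoke Theorem~\ref{Brauer-Manin obstruction is only obstruction to strong approximation}(1) directly: since $X_{\bar{k}}$ has no nontrivial connected finite \'etale cover, any torsor $\pi:Y\to X$ under a finite \'etale group scheme is pulled back from a torsor over $\Spec k$, so that $X(\A_k)^{\pi}=X(\A_k)$ (again a reciprocity computation), and part (1) then finishes the argument. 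I would present the Brauer-group route as the main one.
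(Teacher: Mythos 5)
Your proposal is circular within the paper's logical architecture. In the paper, Theorem~\ref{Strong approximation for simply connected stacky curves} is proved \emph{first} (in \S 3, restated as Theorem~\ref{Simply connected curves satisfy strong approximation}) and is then the key input to the proof of Theorem~\ref{Brauer-Manin obstruction is only obstruction to strong approximation} in \S 4: there one takes the universal cover $T \to X$, observes that $T$ and all of its twists are simply connected tame stacky curves, invokes Theorem~\ref{Strong approximation for simply connected stacky curves} to get strong approximation on each twist, and only then applies the descent formalism (Theorem~\ref{Strong approximation for quotients} together with Proposition~\ref{Brauer-Manin obstruction is contained in descent obstruction}) to descend strong approximation to $X$. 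So the statement you are trying to prove is precisely the ``real content'' hiding inside the input you defer to. Moreover the circularity is essential, not an accident of ordering: for a simply connected $X$ one has $\Br X = \Br k$ and every finite \'etale torsor is pulled back from $\Spec k$, so $X(\A_k)^{\Br} = X(\A_k)^{\pi} = X(\A_k)$ and both parts of Theorem~\ref{Brauer-Manin obstruction is only obstruction to strong approximation} literally \emph{reduce to} the statement of Theorem~\ref{Strong approximation for simply connected stacky curves} in this case; the descent machinery only transfers strong approximation from the covers to the base and cannot create it.

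What is missing is the actual approximation argument. The paper first classifies simply connected tame stacky curves as the genus $0$ curves and the weighted projective lines $\Proj_k(n,m)$ with $n,m$ coprime and prime to the characteristic. For genus $0$ curves strong approximation is classical. For $\Proj_k(n,m) = [\A^2_k\setminus\{0\}/\G_m]$ the paper uses the $\G_m$-torsor $\pi:\A^2_k\setminus\{0\}\to\Proj_k(n,m)$, which by Hilbert~90 is surjective on $K$-points for every field $K$, and then carries out an explicit construction: the prescribed opens $U_v$ are lifted to opens in $k_v^{\times}\times k_v^{\times}$, a point $(y_1,y_2)\in k^2\setminus\{0\}$ is produced by repeated applications of strong and weak approximation on $\A^1$ with careful control of valuations at two auxiliary places $w_0,w_1$, and one checks that $\pi(y_1,y_2)$ lands in each $U_v$ and is integral (after rescaling by the $\G_m$-action) at all remaining places. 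None of this appears in your proposal. Your observation that simply connected tame stacky curves have genus $g<1$ is correct and consistent with the paper's classification, but it does not bear on the gap.
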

A weaker form of this result is due to Christensen \cite[Thm 1.0.1]{Christensen2020Topology}. He only considers stacky curves of genus $g < \frac{1}{2}$ and shows the weaker statement that strong approximation holds after removing a place in $\A_k$.

All tame stacky curves with genus $g < \frac{1}{2}$ are simply connected due to the Riemann-Hurwitz formula. The above theorem thus give a geometric explanation why the minimal genus of a stacky curve for which the Hasse principle fails is $\frac{1}{2}$.

If $\mathcal{X}$ is an integral model of $X$ then $\mathcal{X}(\A_{U})$ is an open subset of $X(\A_k)$ so we deduce.
\begin{corollary}
Let $\mathcal{X}$ be an integral model of a stacky curve $X$ with finite abelian geometric fundamental group. The Brauer-Manin obstruction is the only obstruction to the integral Hasse principle for $\mathcal{X}$.
\label{Brauer-Manin obstruction is only obstruction to the integral Hasse principle for stacky curves with abelian fundamental group}
\end{corollary}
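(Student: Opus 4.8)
The plan is to deduce the corollary formally from Theorem~\ref{Brauer-Manin obstruction is only obstruction to strong approximation}~(2) together with two facts recorded in the excerpt: that $\mathcal X(U)$ is the preimage of $\mathcal X(\A_U)$ under $X(k)\to X(\A_k)$, i.e.\ $\mathcal X(U) = X(k)\cap \mathcal X(\A_U)$ inside $X(\A_k)$ (using separatedness of $\mathcal X$), and that $\mathcal X(\A_U)$ is an \emph{open} subset of $X(\A_k)$. As a preliminary I would note that a tame stacky curve (and here $X$ is taken to be tame, as in Theorem~\ref{Brauer-Manin obstruction is only obstruction to strong approximation}) has finite geometric fundamental group precisely when its orbifold Euler characteristic is positive, that is, when its genus is $g<1$; so the hypothesis on $X$ forces $g<1$ and Theorem~\ref{Brauer-Manin obstruction is only obstruction to strong approximation}~(2) applies, giving that the image of $X(k)$ is dense in $X(\A_k)^{\Br}$, where $\Br = \Br X$.

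Now suppose the integral Brauer--Manin set $\mathcal X(\A_U)^{\Br} := \mathcal X(\A_U) \cap X(\A_k)^{\Br}$ is non-empty; I must exhibit a point of $\mathcal X(U)$. Since $\mathcal X(\A_U)$ is open in $X(\A_k)$, the set $\mathcal X(\A_U)^{\Br}$ is a non-empty open subset of $X(\A_k)^{\Br}$ for the subspace topology, hence, by density of $X(k)$ in $X(\A_k)^{\Br}$, it contains the image of a point $P \in X(k)$. Then $P \in X(k) \cap \mathcal X(\A_U) = \mathcal X(U)$, so $\mathcal X(U) \neq \emptyset$. This is exactly the statement that the Brauer--Manin obstruction --- here the one attached to $\Br X$ --- is the only obstruction to the integral Hasse principle for $\mathcal X$; and since for a class $\alpha \in \Br \mathcal X$ and a point of $\mathcal X(\A_U)$ the local invariant of $\alpha$ vanishes at every place coming from $U$ (as $\Br \mathcal O_v = 0$), working with $\Br X$ rather than $\Br \mathcal X$ loses nothing and is the more natural normalisation.

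I do not expect a real obstacle: all of the arithmetic content sits in Theorem~\ref{Brauer-Manin obstruction is only obstruction to strong approximation}, and the deduction itself is a soft topological argument. The two points deserving a sentence of care are the openness of $\mathcal X(\A_U)$ in Christensen's topology on $X(\A_k)$ \cite{Christensen2020Topology} --- where separatedness and finite type of the model $\mathcal X$ enter, and which the excerpt already records --- and the comparison of $\Br\mathcal X$ with $\Br X$ indicated above, so that the conclusion genuinely concerns the Brauer--Manin obstruction for the integral model $\mathcal X$.
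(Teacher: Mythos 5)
Your argument is correct and is precisely the deduction the paper itself makes: $\mathcal{X}(\A_U)$ is open in $X(\A_k)$, so a non-empty set $\mathcal{X}(\A_U)\cap X(\A_k)^{\Br}$ meets the (dense) image of $X(k)$ by Theorem \ref{Brauer-Manin obstruction is only obstruction to strong approximation}(2), and such a rational point is an integral point since $\mathcal{X}(U)$ is the preimage of $\mathcal{X}(\A_U)$ under $X(k)\to X(\A_k)$. Your preliminary observation that finiteness of the geometric fundamental group forces $g<1$ for a tame stacky curve is also the right way to reconcile the corollary's hypotheses with those of the theorem.
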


The Brauer group of a stacky curve is in general infinite so computing the set $\mathcal{X}^{\Br}(\A_U)$ is rather difficult in practice. But if $\mathcal{X} \to U$ is smooth and proper then it turns out that there is a finite subquotient $\Beh(\mathcal{X})$ of $\Br X$ which is still able to explain all failures of the integral Hasse principle and for which the corresponding Brauer-Manin set $\mathcal{X}(\A_U)^{\Beh(\mathcal{X})}$ is easier to compute. The group $\Beh (\mathcal{X})$ of \emph{locally constant algebras} is defined in \S 4.

The obstruction coming from $\Beh(\mathcal{X})$ over a number field is known \cite[Prop. 3.3.2, 3.6.4]{Colliot1987Descente} to be related to the vanishing of the \emph{elementary obstruction} \cite[Def. 2.2.1]{Colliot1987Descente} for varieties. We will extend the definition of the elementary obstruction to integral points using a construction due to Harari and Skorogobatov \cite[\S 1]{Harari2013DescentOpen} and prove that this relation extends to the integral case and for stacks, this allows us to show our second main result.
\begin{theorem}
Let $\mathcal{X} \to U$ be a proper smooth map of Deligne-Mumford stacks such that every fiber is a stacky curve. Let $X$ be the generic fiber and assume it has genus $g < 1$.
\begin{enumerate}
    \item The elementary obstruction is the only obstruction to the existence of an integral point for $\mathcal{X}$.
    \label{The elementary obstruction is the only obstruction: part 1}
    \item If there is at least one non-archimedean place  of $k$ which is not induced by a point of $U$ then $\mathcal{X}(\A_U)^{\Beh(\mathcal{X})} \neq \emptyset$ implies $\mathcal{X}(U) \neq \emptyset$.
    \label{The Brauer-Manin obstruction coming from beh is the only one}
    \item In particular, if $X$ has a signature different from $(0)$ and $(0; n,n)$ for all $n > 1$ then  $\mathcal{X}(U) \neq \emptyset.$
    \label{Only stacky curves with signature (0; n, n) can have an obstruction}
\end{enumerate}
\label{The elementary obstruction is the only obstruction}
\end{theorem}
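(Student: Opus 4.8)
The plan is to deduce all three parts from Theorem~\ref{Brauer-Manin obstruction is only obstruction to strong approximation}, by comparing — crucially using that $\mathcal{X}\to U$ is proper and smooth — the (suitably defined) integral elementary obstruction with the finite group $\Beh(\mathcal{X})$. First I would set up the reduction to a statement about adelic points. For any integral model $\mathcal{X}(\A_U)$ is open in $X(\A_k)$ and $\mathcal{X}(U)=X(k)\cap\mathcal{X}(\A_U)$ \cite[Lemma 13]{Bhargava2020Stacky}. Let $\pi\colon Y\to X$ be the finite \'etale torsor of Theorem~\ref{Brauer-Manin obstruction is only obstruction to strong approximation}, so $X(k)$ is dense in $X(\A_k)^\pi$. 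If $X(\A_k)^\pi\cap\mathcal{X}(\A_U)\ne\emptyset$ then, being a nonempty open subset of $X(\A_k)^\pi$, it meets $X(k)$, whence $\mathcal{X}(U)\ne\emptyset$. Thus it suffices to show, for part~\ref{The elementary obstruction is the only obstruction: part 1}, that $\mathcal{X}(\A_U)\ne\emptyset$ together with vanishing of the elementary obstruction forces $X(\A_k)^\pi\cap\mathcal{X}(\A_U)\ne\emptyset$, and, for part~\ref{The Brauer-Manin obstruction coming from beh is the only one}, that $\mathcal{X}(\A_U)^{\Beh(\mathcal{X})}\ne\emptyset$ forces the same when there is a free non-archimedean place.

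The heart of the argument is a comparison theorem for proper smooth $\mathcal{X}\to U$ with stacky-curve fibres of genus $g<1$: the elementary obstruction vanishes if and only if $X(\A_k)^\pi\cap\mathcal{X}(\A_U)\ne\emptyset$ (given $\mathcal{X}(\A_U)\ne\emptyset$), and, once a free non-archimedean place is present, this is implied already by $\mathcal{X}(\A_U)^{\Beh(\mathcal{X})}\ne\emptyset$. I would prove it in two pieces. The first is a d\'evissage showing that on such a model no Brauer class outside $\Beh(\mathcal{X})$ — and, since the geometric fundamental group is finite for $g<1$, no non-abelian descent datum either — can cut down the adelic set, except possibly at the archimedean places: every special fibre of $\mathcal{X}$ over $U$ is a smooth proper tame stacky curve of genus $<1$ over a finite field, hence carries a rational point (its coarse conic has one by Chevalley--Warning, and when the coarse point is forced onto the stacky locus one uses that the residual gerbes are neutral), so any such class has constant local invariant there. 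The residual archimedean discrepancy between the full obstruction and the $\Beh(\mathcal{X})$-obstruction is exactly what the free non-archimedean place of part~\ref{The Brauer-Manin obstruction coming from beh is the only one} absorbs, by giving room to modify an integral adelic point at that place; the elementary obstruction records this discrepancy directly, which is why part~\ref{The elementary obstruction is the only obstruction: part 1} needs no such hypothesis.

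The second, and principal, piece is the algebraic comparison, and here I expect the main obstacle. Following Harari and Skorobogatov \cite[\S 1]{Harari2013DescentOpen} I would define the integral elementary obstruction of $\mathcal{X}/U$ as the class of the two-term complex assembled from the geometric units and the divisors lying over the places outside $U$, and then extend the comparison of Colliot-Th\'el\`ene and Sansuc \cite[Prop. 3.3.2, 3.6.4]{Colliot1987Descente} to this stacky integral setting: vanishing of the elementary obstruction is equivalent to the splitting of the arithmetic-duality sequence expressing $\mathrm{Br}_1\mathcal{X}$ through $\Pic$ and the unit groups, which in turn is equivalent to the pairing of $\mathcal{X}(\A_U)$ with $\Beh(\mathcal{X})$ being identically zero. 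The real work is to compute the relevant $\mathrm{Ext}$-groups of the Galois (respectively arithmetic-fundamental-group) modules attached to $\Pic(\overline{X})$ and the unit groups, and to verify that each ingredient has a Deligne--Mumford analogue — in particular that $\overline{k}[\mathcal{X}]^*=\overline{k}^*$ since $\mathcal{X}$ is proper, and that the local and global duality statements used in \cite{Colliot1987Descente} survive the passage from schemes to stacks. Combining the two pieces with the reduction above yields parts~\ref{The elementary obstruction is the only obstruction: part 1} and~\ref{The Brauer-Manin obstruction coming from beh is the only one}.

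For part~\ref{Only stacky curves with signature (0; n, n) can have an obstruction} I would run through the signatures. For local solubility: at the places of $U$ this is good reduction together with the point on the special fibre just discussed; at the archimedean places one uses that a signature other than $(0)$ and $(0;n,n)$ is not ``doubled'', i.e.\ not of the form $(0;e_1,e_1,\dots,e_r,e_r)$ — and for $g<1$ these are the only doubled signatures over $\mathbb{P}^1$ — so the coarse conic is nowhere anisotropic and hence has a rational point over every completion, which lifts to the stack. For vanishing of the elementary obstruction one computes the group from the previous step out of $\Pic(\overline{X})$, which for a genus-$<1$ stacky curve is an extension of $\Z$ by a finite group read off from the orders of the stacky points, together with its Galois action: the obstruction is forced to vanish unless the coarse space is a nontrivial conic, i.e.\ signature $(0)$, or the Galois action interchanges the two stacky points of a signature $(0;n,n)$ — which is precisely the mechanism behind the Bhargava--Poonen counterexample \cite{Bhargava2020Stacky} — while the presence of a further stacky point, or of stacky points of distinct orders, already kills it.
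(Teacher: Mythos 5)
Your overall strategy --- routing everything through Theorem~\ref{Brauer-Manin obstruction is only obstruction to strong approximation} and an alleged near-coincidence of the full descent/Brauer--Manin set with the $\Beh(\mathcal{X})$-set --- is not the paper's route, and it has a genuine gap at its center. The paper never shows that $\mathcal{X}(\A_U)^{\Beh(\mathcal{X})}\neq\emptyset$ forces $X(\A_k)^{\pi}\cap\mathcal{X}(\A_U)\neq\emptyset$, and your d\'evissage argument for this does not work: constancy of the local evaluation at a place of $U$ holds for $\alpha\in\Beh(\mathcal{X})$ because $\alpha$ extends to $\Br\mathcal{X}_{\mathcal{O}_v}$ and $\Br\mathcal{O}_v=0$, \emph{not} because the special fibre has a rational point, and for the (typically infinitely many) classes of $\Br_1 X/\Br_0 X$ that do not extend over $\mathcal{X}$ the evaluation at places of $U$ is genuinely non-constant. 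So the discrepancy between the two obstruction sets is not concentrated at the archimedean places, and showing that an integral adelic point orthogonal to the finite group $\Beh(\mathcal{X})$ can be modified into one orthogonal to all of $\Br_1 X$ is essentially as hard as the theorem itself. The paper sidesteps this entirely: it proves (Theorem~\ref{The elemantary obstruction is the only obstruction if genus < 1}) that $w_{\mathcal{X}}=0$ alone --- with no adelic hypothesis --- already yields $\mathcal{X}(U)\neq\emptyset$, by using the splitting of the triangle \eqref{The fundamental triangle of KD} to produce a torsor $\mathcal{Y}\to\mathcal{X}$ over $U$ of injective type, and then showing by a Picard-group computation that $\mathcal{Y}$ is a form of $\Proj^1_U$, hence a scheme with a $U$-point by the valuative criterion; strong approximation and adelic points never enter. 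Part (2) then follows from part (1) by the Artin--Verdier duality between $H^1_c(U,KD'(\mathcal{X}))$ and $\mathrm{Ext}^2_U(KD'(\mathcal{X}),\G_m)$ together with the identification of $\beh$ with $w_{\mathcal{X}*}$ (Propositions~\ref{The map beh coming from the snake lemma} and~\ref{Relation between beh and the boundary map of the triangle}) --- the piece you correctly flag as the main algebraic work but leave entirely unexecuted.

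Your treatment of part (3) also inverts the paper's logic and would break down at the hard case. In the paper, part (3) is proved \emph{first} (Theorem~\ref{Only stacky curves with signature not equal to (0;n, n) can have an elementary obstruction}) and is an input to part (1): for any signature other than $(0)$, $(0;n,n)$ and $(0;2,2,2)$-with-transitive-Galois-action there is a stacky point with a distinguished stabilizer order, hence a rational stacky point, and Theorem~\ref{Stacky points are integral points} (properness plus the root-stack structure of Lemma~\ref{Relative stacky curves are smooth}) promotes it directly to a $U$-point --- no local solubility check and no elementary-obstruction computation are needed. The remaining case, signature $(0;2,2,2)$ with $\Gal(k_s/k)$ acting transitively on the three stacky points, is exactly the one your plan cannot reach: there is no rational stacky point, and verifying the vanishing of the elementary obstruction there is not routine. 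The paper handles it by a separate corestriction argument (Lemma~\ref{The case of signature (0; 2, 2, 2)}): it builds a $T^0$-torsor over $\mathcal{X}$ as a corestriction from the cubic field of the stacky locus, shows the resulting genus-$0$ curve acquires a point over that cubic field by controlling the class in $H^1(\cdot,\mathrm{PGL}_2)$, and concludes by the odd-degree trick. Without an argument for this case your proof of part (3) --- and hence, in the paper's architecture, of part (1) as well --- is incomplete.
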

The signature of a stacky curve is defined in \S 2.

This theorem allows us to give a simple criterion for when a smooth proper integral model of a stacky curve of genus $\frac{1}{2}$ has an integral point in Theorem \ref{Criterion for a Brauer-Manin obstruction coming from a single algebra}.
\subsection*{Structure}
We start in \S 2 with some preliminaries on stacky curves, the analytic topology on stacks and on twisting. We then prove the first simple cases of Theorem \ref{The elementary obstruction is the only obstruction}.

We prove Theorem \ref{Strong approximation for simply connected stacky curves} in \S 3 and then apply it and a classical result of Colliot-Th\'el\`ene and Sansuc \cite{Colliot1987Descente} which relates the Brauer-Manin and descent obstructions to prove Theorem \ref{Brauer-Manin obstruction is only obstruction to strong approximation} in \S 4.

We will then compute the Brauer group of a stacky curve in \S 5 and show that it is generated by certain cup products.

In \S 6 we introduce the elementary obstruction to integral points and use it to prove the first and last part of Theorem \ref{The elementary obstruction is the only obstruction}. We then show that this implies the second part of Theorem \ref{The elementary obstruction is the only obstruction}, analogously to \cite[Cor. 3.6]{Harari2013DescentOpen}.

In \S 7 we will use the obstruction induced by $\Beh(\mathcal{X})$ to give a simple necessary and sufficient criterion for a genus $\frac{1}{2}$ curve over $\Q$ to fail the integral Hasse principle. We will finish by using this criterion to reinterpret the example of Bhargava-Poonen.

\subsection*{Conventions}
For any stacks $X, S$ we will use $X(S)$ to denote the set of equivalence classes of morphisms $S\to X$ and for a ring $R$ we write $X(R) := X( \Spec R)$.

Fix for every field $k$ a separable closure $k_s$ and an algebraic closure $\overline{k}$.

If $A$ is an abelian group then we write $A^{\vee} := \Hom(A, \Q/ \Z)$ for its dual. We let $A[n] \subset A$ be the subgroup of $n$-torsion elements.

All cohomology/homotopy groups are \'etale cohomology/homotopy groups. If $G$ is a smooth group scheme over $X$ then $H^1(X, G)$ classifies $G$-torsors over $X$. We will freely use this identification.
\subsection*{Acknowledgment}
Firstly I would like to thank my advisor Arne Smeets for his encouragement and help. I have also benefited from helpful conversations with Fritz Beukers and Siddharth Mathur. I would like to thank Daniel Loughran and John Voight for their help in improving the exposition.
\section{Preliminaries}
\subsection{Stacky curves}
Let $k$ be a field of characteristic $p$ (possibly 0). We define a \emph{stacky curve} over $k$ to be a proper smooth geometrically connected Deligne-Mumford stack of dimension $1$ over $k$ which contains a dense open subscheme. We call a stacky curve \emph{tame} if $p$ does not divide the order of any stabilizer. We refer to \cite[\S 5]{Voight2022Canonical} for basic facts about stacky curves.

For a stacky curve $X$ we let $X_{\text{coarse}}$ be its coarse moduli space, which is always a smooth proper curve. Let $g_{\text{coarse}}$ be the genus of $X_{\text{coarse}}$. Let $\mathcal{P} \subset X_{\text{coarse}}$ be the closed subscheme on which the canonical map $X \to X_{\text{coarse}}$ is not an isomorphism, we will call this the \emph{stacky locus}. Let $1 < e_1, \cdots, e_r$ be the orders of the stabilizers of the finite set of points lying above the points of $\mathcal{P}(\overline{k}) = \mathcal{P}(k_s)$, which we call the \emph{stacky points} (the equality follows from \cite[Lemma 7]{Bhargava2020Stacky}). The \emph{signature} of a stacky curve is the tuple $(g_{\text{coarse}}; e_1, \cdots, e_r)$, it is well-defined up to a permutation of the $e_i$. We will also write $(g; 1, \cdots, 1,  e_1, \cdots, e_r) := (g; e_1, \cdots, e_r)$. The \emph{genus} of a tame stacky curve $X$ is 
\begin{equation}
    g := g_{\text{coarse}} + \frac{1}{2}\left( \frac{e_1 -1}{e_1} + \cdots + \frac{e_r -1}{e_r}\right).
    \label{Genus of stacky curve}
\end{equation}
The Riemann-Hurwitz formula holds for stacky curves with this choice of genus. This follows from \cite[Lemma 5.5.3, Prop. 5.5.6]{Voight2022Canonical}. If $g < 1$, i.e. if $X$ is Fano, then $g_{\text{coarse}} = 0$ and $r \leq 3$.

A tame stacky curve is completely determined by its coarse moduli space, its stacky points and the numbers $e_i$ by \cite[Lemma 5.3.10]{Voight2022Canonical}. In particular, every stacky curve is a root stack \cite[Def. 2.2.1]{Cadman2007Tangency} over its coarse moduli space $X_{\text{coarse}}$. This implies that dominant maps between stacky curves are the same as dominant maps between their coarse moduli spaces satisfying certain ramification conditions.

\subsection{Fundamental groups}
The fundamental groups of stacky curves over $\C$ have been computed in terms of their signature \cite[Prop. 5.6]{Behrend2006Uniformization}. For a general separably closed field $k$ we can replace the Van Kampen theorem by its analogue \cite[Thm. IX.5.1]{Grothendieck2003SGA1} for the \'etale topology. The analogous argument shows that the fundamental group is a quotient of the profinite completion of the group given in \cite[Prop. 5.6]{Behrend2006Uniformization} by a pro-$p$ group. This group is finite and coprime to $p$ if $X$ is a tame stacky curve of genus $g < 1$, so the description of the fundamental groups of stacky curves in \cite[Prop 5.5]{Behrend2006Uniformization} holds for tame stacky curves of genus $<1$ over any separably closed field. We will freely use this in what follows.

\subsection{Relative stacky curves}
A \emph{relative (tame) stacky curve} over a Noetherian scheme $S$ is a smooth proper morphism $\mathcal{X} \to S$ of Deligne-Mumford stacks whose fibers are (tame) stacky curves. It is claimed in \cite[Ex. 11.2.2]{Voight2022Canonical} that relative stacky curves can have certain pathologies, but this is false. In their first example the structure morphism is not well-defined and it is not smooth in their second example. We will prove that all relative tame stacky curves over a regular base are \emph{twisted}, see \cite[Def. 11.2.1]{Voight2022Canonical}. 

Let $X$ be a Deligne-Mumford stack, $D \subset X$ a Cartier divisor and $n \in \N$. We will use the notation $X[\sqrt[n]{D}]$ for the root stack which is rooted at $D$, i.e. what is denoted by $X_{D, n}$ in \cite[Def. 2.2.4]{Cadman2007Tangency}.
\begin{lemma}
Let $\mathcal{X} \to S$ be a tame stacky curve. The map $\mathcal{X}_{\emph{coarse}} \to S$ is smooth.

If $S$ is regular then $\mathcal{X}$ is isomorphic to a root stack over $\mathcal{X}_{\emph{coarse}}$ rooted at irreducible divisors which are pairwise disjoint and \'etale over $S$.
\label{Relative stacky curves are smooth}
\end{lemma}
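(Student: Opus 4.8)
The plan is to reduce everything to the structure theory of tame stacky curves already recalled in \S 2: a tame stacky curve is a root stack over its coarse space along the stacky locus, with prescribed rooting orders $e_i$. First I would treat the coarse space. Since $\mathcal{X} \to S$ is flat and proper with reduced geometrically connected fibres of dimension $1$, the coarse moduli space $\mathcal{X}_{\text{coarse}}$ exists (the fibres are tame Deligne--Mumford, so coarse spaces commute with base change by the Keel--Mori theorem in the tame case), and its formation commutes with the arbitrary base changes $S' \to S$. Its fibres are the coarse spaces of the fibral stacky curves, hence smooth proper curves; flatness of $\mathcal{X}_{\text{coarse}} \to S$ follows from flatness of $\mathcal{X} \to S$ together with finiteness of $\mathcal{X} \to \mathcal{X}_{\text{coarse}}$, and a flat morphism of finite type whose geometric fibres are smooth curves is smooth. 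This gives the first assertion.

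Next I would identify the stacky locus as a relative divisor. Working fppf- or étale-locally on $S$, where by tameness the coarse map is a quotient by a finite group acting on a smooth curve, the locus $\mathcal{P} \subset \mathcal{X}_{\text{coarse}}$ where $\mathcal{X} \to \mathcal{X}_{\text{coarse}}$ fails to be an isomorphism is closed, and its fibre over each $s \in S$ is the finite set of stacky points of $\mathcal{X}_s$. Because $S$ is regular and $\mathcal{X}_{\text{coarse}} \to S$ is a smooth relative curve, $\mathcal{P}$ is pure of codimension $1$ in $\mathcal{X}_{\text{coarse}}$, hence a Cartier divisor; since its intersection with every fibre is finite and étale (the stacky points of a tame stacky curve are $k_s$-rational and reduced, by \cite[Lemma 7]{Bhargava2020Stacky}), the divisor $\mathcal{P} \to S$ is quasi-finite, flat and unramified, hence étale. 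After an étale cover of $S$ it breaks up into sections, so I may take $\mathcal{P} = \coprod_j D_j$ with the $D_j$ irreducible, pairwise disjoint (they are disjoint on each fibre and finite over a separated base), and étale over $S$; the rooting order $n_j$ is locally constant on $S$, hence constant on each $D_j$ up to refining the cover.

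Finally I would show $\mathcal{X}$ is the root stack $\mathcal{X}_{\text{coarse}}\bigl[\sqrt[n_1]{D_1}, \ldots, \sqrt[n_m]{D_m}\bigr]$. One builds the comparison morphism $\mathcal{X} \to \mathcal{X}_{\text{coarse}}\bigl[\sqrt[n_j]{D_j}\bigr]$ from the universal property of the root stack: the line bundle $\mathcal{O}(D_j)$ pulled back to $\mathcal{X}$ acquires a canonical $n_j$-th root because $\mathcal{X} \to \mathcal{X}_{\text{coarse}}$ is ramified to order exactly $n_j$ along the preimage of $D_j$ (this is the defining property of the fibral root-stack structure, applied relatively). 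This morphism is representable, proper and quasi-finite, and it is an isomorphism on every geometric fibre by \cite[Lemma 5.3.10]{Voight2022Canonical}, hence an isomorphism by the fibrewise criterion (flat $+$ fibrewise iso, or simply: proper quasi-finite monomorphism, since it is a fibrewise iso hence a monomorphism).

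The main obstacle I expect is the middle step: showing that $\mathcal{P}$ is a well-behaved \emph{relative} Cartier divisor, étale over $S$, rather than just a closed subset whose fibres are finite. The issue is that a priori the stacky locus could jump in the sense that the ``same'' stacky point acquires a larger stabilizer over a non-generic point of $S$; one must rule this out using tameness and smoothness of $\mathcal{X} \to S$, essentially by the local structure theorem for tame DM stacks (étale-locally a quotient $[V/\Gamma]$ with $\Gamma$ acting with order prime to the residue characteristics and with the ramification divisor of $V \to V/\Gamma$ being smooth over $S$). This is exactly where regularity of $S$ enters — it guarantees that the codimension-$1$ closed set $\mathcal{P}$ is Cartier and that $\mathcal{O}_{\mathcal{X}_{\text{coarse}},\mathcal{P}}$ has the expected local structure — and it is also the point at which the (correct) refutation of the alleged pathologies in \cite[Ex. 11.2.2]{Voight2022Canonical} really lives.
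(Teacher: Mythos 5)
Your proposal is correct and follows essentially the same route as the paper: smoothness of the coarse space from flatness plus smooth fibres, a comparison map to the iterated root stack from the universal property, and \'etale-ness of the branch divisor over $S$ via the local structure theorem for tame stacky curves. The one step you defer --- ruling out jumping of the stacky locus over special points of $S$ --- is exactly what the paper handles by citing Olsson's local normal form $[\Spec \mathcal{O}_{\mathcal{X}_{\text{coarse}}}[z]/(z^n - \pi)/\mu_n]$ with $\mathcal{X}_{\text{coarse}} \to \A^1_S$ \'etale, so that the ramification divisor $\{\pi = 0\}$ is \'etale over $S$.
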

\begin{proof}
The map $\mathcal{X}_{\text{coarse}} \to S$ is flat since $\mathcal{X} \to S$ is flat by \cite[Cor. 3.3]{Abramovich2008Tame}. By loc. cit. we also have for any point $s \in S$ an isomorphism $(\mathcal{X}_{\text{coarse}})_s \cong (\mathcal{X}_s)_{\text{coarse}}$. The latter object is the coarse moduli space of a smooth stacky curve and thus smooth. This implies that $\mathcal{X}_{\text{coarse}} \to S$ is smooth.

Let $D_1, \cdots, D_k \subset \mathcal{X}_{\text{coarse}}$ be the irreducible components of the ramification divisor $D$ of $\mathcal{X} \to \mathcal{X}_{\text{coarse}}$ and let $n_1, \cdots, n_k$ be the corresponding ramification degrees. We get a map $\mathcal{X} \to \mathcal{X}_{\text{coarse}}[\sqrt[n_1]{D_1}] \cdots [\sqrt[n_k]{D_k}]$ to the iterated root stack by the universal property of root stacks. 

We now apply \cite[Prop. 2.2]{Olsson2007Twisted}. The map $\mathcal{X} \to \mathcal{X}_{\text{coarse}}$ is thus \'etale locally on $\mathcal{X}_{\text{coarse}}$ given by the projection map of the quotient stack
\begin{equation}
    [\Spec \mathcal{O}_{\mathcal{X}_{\text{coarse}}}[z]/[z^n - \pi]/ \mu_n] \to \Spec \mathcal{X}_{\text{coarse}}.
    \label{Local description of relative stacky curve}
\end{equation}
Here $\zeta \in \mu_n$ acts via  $z \to \zeta z$ and $\pi \in \mathcal{O}_{\mathcal{X}_{\text{coarse}}}$ is such that the induced map $\mathcal{X}_{\text{coarse}} \to \A^1_S$ is \'etale. The ramification divisor of this map is $\{ \pi = 0 \}$ which is \'etale over $S$, so $D$ is \'etale over $S$. This implies that its irreducible components $D_1, \cdots, D_k$ are disjoint and \'etale over $S$. The map \eqref{Local description of relative stacky curve} is the structure map of the root stack $\mathcal{O}_{{\mathcal{X}_{\text{coarse}}}}[\sqrt[n]{\{ \pi = 0\}}]$ by \cite[Ex. 2.4.1]{Cadman2007Tangency}. The map $\mathcal{X} \to \mathcal{X}_{\text{coarse}}[\sqrt[n_1]{D_1}] \cdots [\sqrt[n_k]{D_k}]$ is thus \'etale locally an isomorphism, hence an isomorphism.
\end{proof}

It follows that stacky points on relative stacky curves extend to integral points. To show this we use the relative normalization of schemes \cite[\href{https://stacks.math.columbia.edu/tag/035H}{Tag 035H}]{stacks-project}.
\begin{theorem}
Let $S$ be a Dedekind scheme with fraction field $K$ and $\mathcal{X} \to S$ a relative stacky curve. Let $X$ be the generic fiber. Let $Q \in \mathcal{P} \subset X_{\emph{coarse}}$ be a point in the stacky locus. Let $T$ be the normalization of $S$ in $Q$. Then $T$ is finite \'etale over $S$ and $Q$ extends to a $T$-point $\mathcal{Q} \in \mathcal{X}(T)$.

In particular, if $\mathcal{P}(k) \neq \emptyset$ then $\mathcal{X}(S) \neq \emptyset$.
\label{Stacky points are integral points}
\end{theorem}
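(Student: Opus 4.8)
The plan is to use Lemma~\ref{Relative stacky curves are smooth} to present $\mathcal{X}$ as an iterated root stack over $\mathcal{X}_{\text{coarse}}$ and then to extend $Q$ by lifting a closed immersion along $\mathcal{X}\to\mathcal{X}_{\text{coarse}}$. Since a Dedekind scheme is regular, Lemma~\ref{Relative stacky curves are smooth} gives an isomorphism $\mathcal{X}\cong\mathcal{X}_{\text{coarse}}[\sqrt[n_1]{\mathcal{D}_1}]\cdots[\sqrt[n_k]{\mathcal{D}_k}]$ with the $\mathcal{D}_i\subset\mathcal{X}_{\text{coarse}}$ irreducible, pairwise disjoint and \'etale over $S$; each $\mathcal{D}_i\to S$ is moreover finite, being closed in the proper $S$-scheme $\mathcal{X}_{\text{coarse}}$, and surjective since finite flat morphisms are open and $S$ is connected, hence finite \'etale. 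Passing to generic fibres shows $\mathcal{P}=\bigcup_i(\mathcal{D}_i)_K$, so $Q$ is the generic point of a unique $\mathcal{D}_i$. As $\mathcal{D}_i$ is \'etale over the regular scheme $S$ it is regular, so reduced and normal; hence $\mathcal{D}_i$ is the reduced closure of $\{Q\}$ and coincides with the normalization $T$ of $S$ in $Q$, and in particular $T\to S$ is finite \'etale. This proves the first assertion, and the final statement is the case of a rational stacky point, for which $T=S$.

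To extend $Q$, observe that since $\mathcal{X}_{\text{coarse}}$ is separated and $T$ is reduced with dense generic fibre, a point $\mathcal{Q}\in\mathcal{X}(T)$ lying over $Q$ is the same as a lift of the closed immersion $\iota\colon T=\mathcal{D}_i\hookrightarrow\mathcal{X}_{\text{coarse}}$ along $\mathcal{X}\to\mathcal{X}_{\text{coarse}}$. By the universal property of root stacks \cite[Def.~2.2.4]{Cadman2007Tangency}, such a lift amounts to giving, for each $j$, a line bundle $M_j$ on $T$ with a section $t_j$, an isomorphism $M_j^{\otimes n_j}\cong\iota^*\mathcal{O}(\mathcal{D}_j)$, and the compatibility $t_j^{\otimes n_j}\mapsto\iota^*s_{\mathcal{D}_j}$ of canonical sections. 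For $j\neq i$ the divisor $\mathcal{D}_j$ is disjoint from $\mathcal{D}_i=T$, so $\iota^*s_{\mathcal{D}_j}$ trivialises $\iota^*\mathcal{O}(\mathcal{D}_j)$ and one may take $M_j=\mathcal{O}_T$, $t_j=1$. For $j=i$ one has $\iota^*s_{\mathcal{D}_i}=0$, so taking $t_i=0$ the only datum left is an isomorphism $M_i^{\otimes n_i}\cong N$, where $N:=\iota^*\mathcal{O}(\mathcal{D}_i)\cong N_{\mathcal{D}_i/\mathcal{X}_{\text{coarse}}}\cong\iota^*T_{\mathcal{X}_{\text{coarse}}/S}$ is the normal bundle (the last isomorphism coming from the conormal sequence, using that $\mathcal{D}_i\to S$ is \'etale).

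Everything therefore reduces to producing an $n_i$-th root of $N$ in $\Pic(T)$, equivalently to showing that the reduced stacky divisor $\mathcal{G}_i:=\{\sigma_i=0\}\subset\mathcal{X}$ --- which is a $\mu_{n_i}$-gerbe over $T=\mathcal{D}_i$ with class the image of $[N]$ under the Kummer boundary $\Pic(T)\to H^2(T,\mu_{n_i})$ --- is neutral over $T$. For this I would use the \'etale-local model from the proof of Lemma~\ref{Relative stacky curves are smooth}, where $\mathcal{X}_{\text{coarse}}\to\A^1_S$ is \'etale and $\mathcal{D}_i=\{\pi=0\}$: there $N$ is locally trivialised by $\overline{\pi}$, the tautological line bundle $\mathcal{L}_i$ on $\mathcal{X}$ restricts on $\mathcal{G}_i$ to a twisted $n_i$-th root of $N$, and one is left to descend $\mathcal{L}_i|_{\mathcal{G}_i}$ along $\mathcal{G}_i\to T$, that is, to kill the resulting class in $\Pic(T)/n_i\hookrightarrow H^2(T,\mu_{n_i})$. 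I expect this descent to be the main obstacle: it is the point at which one must exploit that $\mathcal{X}\to S$ is proper and smooth, so that $T$ is \'etale over $S$ and $\mathcal{D}_i$ arises as the normalization in the stacky point (and not as an arbitrary finite cover), and completing it will require a careful analysis of the gluing data for $\mathcal{L}_i$ or a direct construction of the root from the relative structure of $\mathcal{X}$.
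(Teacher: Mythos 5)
Your first paragraph is correct and is essentially the paper's argument rearranged: the paper first extends $Q$ to a $T$-point of $\mathcal{X}_{\text{coarse}}$ by the valuative criterion for properness and then identifies its image with one of the rooting divisors $D_i$ supplied by Lemma~\ref{Relative stacky curves are smooth}, whereas you identify $T$ directly with the reduced closure of $Q$; both routes give $T \cong D_i$ finite \'etale over $S$. The genuine gap is in the second half. You correctly unwind the definition of the root stack, dispose of the factors $j \neq i$ using disjointness of the divisors, and reduce the existence of the lift to producing an $n_i$-th root of $N := \iota^*\mathcal{O}(D_i) \cong N_{D_i/\mathcal{X}_{\text{coarse}}}$ in $\Pic(T)$ --- and then you stop, conceding that you cannot carry out this last step. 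As written the proposal therefore does not prove the theorem: the obstruction you isolate is the class of $[N]$ in $\Pic(T)/n_i\Pic(T) \hookrightarrow H^2(T,\mu_{n_i})$, and since $T$ is a Dedekind scheme finite \'etale over $S$, the group $\Pic(T)/n_i\Pic(T)$ has no reason to vanish in general, so the step cannot be waved away on formal grounds.

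You should know that the paper's own proof is no more explicit at exactly this point: it asserts that the disjointness of the $D_j$, together with the definition of a root stack, implies that $\mathcal{Q}$ lifts, which in your (correct) bookkeeping only trivializes the root data at the indices $j \neq i$ and says nothing about the required $n_i$-th root of the normal bundle at $j = i$. So you have located the crux rather than missed it. To finish, you would need either an argument that $[N]$ is always an $n_i$-th power in $\Pic(T)$ in this situation (note $N \cong T_{\mathcal{X}_{\text{coarse}}/S}|_{D_i}$ by the conormal sequence, and when $\mathcal{X}_{\text{coarse}} \cong \Proj(\mathcal{E})$ this is $\mathcal{O}(2)\otimes \det(\mathcal{E})^{-1}$ restricted to $D_i$, which is not visibly an $n_i$-th power for $n_i > 2$, nor even for $n_i = 2$ unless $\det(\mathcal{E})^{-1}|_{D_i}$ is a square), or an additional hypothesis on $\mathcal{X}$. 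Until one of these is supplied the proof is incomplete.
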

\begin{proof}
The map $\mathcal{X}_{\text{coarse}} \to S$ is proper so we can extend $Q$ to a $T$-point $\mathcal{Q} \to \mathcal{X}_{\text{coarse}}$ by the valuative criterion for properness. Let $\mathcal{I} \subset \mathcal{X}_{\text{coarse}}$ be its image.

Lemma \ref{Relative stacky curves are smooth} implies that there exist disjoint divisors $D_1, \cdots, D_n \subset \mathcal{X}_{\text{coarse}}$ which are finite \'etale over $S$ and such that $\mathcal{X} \cong \mathcal{X}_{\text{coarse}}[\sqrt[n_1]{D_1}] \cdots [\sqrt[n_k]{D_k}]$ for $n_i \in \N$. The image $\mathcal{I}$ is irreducible and lies in the closure $\overline{P} \subset \mathcal{X}_{\text{coarse}}$ so has to be equal to one such divisor, assume $\mathcal{I} = D_1$. So $\mathcal{I}$ is finite \'etale over $S$ and the universal property of normalization implies that $T \cong \mathcal{I}$. The fact that the divisors $D_i$ are disjoint implies by the definition of a root stack \cite[Def. 2.2.1]{Cadman2007Tangency} that $\mathcal{Q}$ lifts to a point of $\mathcal{X}$.
\end{proof}
\subsection{The analytic topology for points on stacks}
Let $R$ be a topological ring which is \emph{essentially analytic} \cite[Def. 5.0.8]{Christensen2020Topology}. Examples of essentially analytic $R$ are local fields $k$ and their rings of integers $\mathcal{O}_k$.

Christensen \cite{Christensen2020Topology} defines a topology on $X(R)$ for any finitely presented algebraic stack $X$ over $R$ which extends the usual analytic topology for $R$-points on schemes and has all the expected properties. 

Let $U$ be a Dedekind scheme whose field of fractions $k$ is a global field and $S$ the set of places of $k$ which do not come from points of $U$. Let $\A_k$ be the ring of adeles and $\A_U$ the subring of integral adeles. Let $\mathcal{X}$ be a separated finite type stack over $U$ with generic fiber $X$. We give $\mathcal{X}(\A_U) = \prod_{v \in S} X(k_v) \times \prod_{v \not \in S} \mathcal{X}(\mathcal{O}_v)$ the product topology. Christensen also defines a topology on $X(\A_k)$ such that we have a homeomorphism of topological spaces \cite[Prop. 13.0.2]{Christensen2020Topology}.
\begin{equation*}
    X(\A_k) \cong \prod_{v \in S} X(k_v) \times \sideset{}{'}\prod_{v \not \in S}(X(k_v), \mathcal{X}(\mathcal{O}_v)). 
\end{equation*}
The right-hand side is a restricted product. In particular, he shows that the right-hand side is independent of the integral model $\mathcal{X}$.

\begin{remark}
The map $X(k) \to X(\mathbb{A}_k)$ is not necessarily injective when $X$ is a Deligne-Mumford stack, a subtlety which does not happen if $X$ is a scheme. For example, if $X := B \mu_8$ is the classifying stack of $\mu_8$ then the above map is $k^{\times}/ k^{\times 8} \to \A_k^{\times}/ \A_k^{\times 8}$, which is not injective if $k = \Q(\sqrt{7})$, see \cite[Thm. 9.1.11]{Neukirch2008Cohomology}. In practice this seems unimportant.
\end{remark}
\subsection{Twists of torsors} The following is explained in more detail in \cite[\S 2.2]{Skorogobatov2001Torsors}. Let $S$ be a scheme and $G$ a smooth group scheme over $S$. A right $G$-torsor $\mathfrak{a}:T \to S$ is also a left $G_{\mathfrak{a}}$-torsor \cite[p. 13]{Skorogobatov2001Torsors}, where $G_{\mathfrak{a}}$ is the result of twisting $G$ by the cocycle $\mathfrak{a}$ via the conjugation action $G \to \text{Aut}(G):  g \to  (h \to g h g^{-1})$. If $G$ is abelian then $G_{\mathfrak{a}} = G$. We also have the inverse torsor $\mathfrak{a}^{-1}: T \to S$ which has the same underlying scheme as $T$ and which is a right $G_{\mathfrak{a}}$-torsor via the action $t \cdot_{\mathfrak{a}^{-1}} g = g^{-1} \cdot_{\mathfrak{a}} t$. It is analogously a left $G$-torsor.

Let $X$ be an algebraic stack over $S$ and $\pi:Y \to X$ a right $G$-torsor. The \emph{twisted torsor} \cite[p.20, Ex. 2]{Skorogobatov2001Torsors} $\pi_{\mathfrak{a}}: Y_{\mathfrak{a}} \to X$ is the right $G_{\mathfrak{a}}$ torsor given by the quotient stack $[T \times_{\mathfrak{a}^{-1}, S} Y/G]$. The action of $G$ is the diagonal one 
\begin{equation*}
    G \times_S Y \times_S T \to  Y\times_S T: (g; y, t) \to (y \cdot g, g \cdot_{\mathfrak{a}^{-1}} t) = (y \cdot g, t  \cdot_{\mathfrak{a}} \cdot g^{-1} ).
\end{equation*}
If $G$ is abelian then the cohomology class of $\pi_{\mathfrak{a}}: Y_{\mathfrak{a}} \to X$ is equal to $[Y] - \mathfrak{a} \in H^1(X, G)$.

We can use torsors to study points on $X$ using the following lemma
\begin{lemma}
The images $\pi_{\mathfrak{a}}(Y_{\mathfrak{a}}(S))$ of the twists $\pi_{\mathfrak{a}}: Y_{\mathfrak{a}} \to X$  are pairwise disjoint and 
\begin{equation*}
    X(S) = \coprod_{\mathfrak{a} \in H^1(S, G)}\pi_{\mathfrak{a}}(Y_{\mathfrak{a}}(S)).
\end{equation*}
\label{Points on the base and on the torsor}
\end{lemma}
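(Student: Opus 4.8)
The plan is to prove the two assertions — disjointness of the images and surjectivity of the union — by a standard fibered-category / torsor-theoretic argument, working entirely over $S$. Fix a point $x \in X(S)$, i.e. a morphism $x\colon S \to X$. Pulling back the torsor $\pi\colon Y \to X$ along $x$ yields a right $G$-torsor $x^*Y \to S$, whose class I will call $\mathfrak{b} = [x^*Y] \in H^1(S,G)$. The first key step is to check that $x$ lifts to a point of $Y$ (over $S$) if and only if $\mathfrak{b}$ is the trivial class: a lift $S \to Y$ over $x$ is exactly a section of $x^*Y \to S$, and a $G$-torsor over $S$ admits a section precisely when it is trivial. This immediately gives the disjointness: if $x \in \pi_{\mathfrak{a}}(Y_{\mathfrak{a}}(S))$ then $x$ lifts to the twisted torsor $Y_{\mathfrak{a}}$, so by the same reasoning $x^*Y_{\mathfrak{a}}$ is trivial, and — using that for abelian $G$ the class of $\pi_{\mathfrak{a}}$ is $[Y] - \mathfrak{a}$, hence $[x^*Y_{\mathfrak{a}}] = \mathfrak{b} - \mathfrak{a}$ — this forces $\mathfrak{a} = \mathfrak{b}$. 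So $x$ lies in the image of at most one twist, the one indexed by $\mathfrak{a} = \mathfrak{b}$.

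For surjectivity I run the same computation in reverse: given $x\in X(S)$ with invariant $\mathfrak{b} = [x^*Y]$, the twist $\pi_{\mathfrak{b}}\colon Y_{\mathfrak{b}} \to X$ has $[x^*Y_{\mathfrak{b}}] = \mathfrak{b} - \mathfrak{b} = 0$, so $x^*Y_{\mathfrak{b}} \to S$ is the trivial torsor and therefore has a section; that section is a lift of $x$ to $Y_{\mathfrak{b}}(S)$, showing $x \in \pi_{\mathfrak{b}}(Y_{\mathfrak{b}}(S))$. Thus $X(S) = \bigcup_{\mathfrak{a}} \pi_{\mathfrak{a}}(Y_{\mathfrak{a}}(S))$, and combined with the disjointness from the previous paragraph this is exactly the claimed decomposition into a coproduct. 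One should also record that the indexing by $H^1(S,G)$ is the honest set of isomorphism classes of $G$-torsors over $S$, which is the identification the paper has declared it will use freely.

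The main technical point to be careful about — and the step I expect to require the most attention — is the identification $[x^*(Y_{\mathfrak{a}})] = [x^*Y] - \mathfrak{a}$ and, more basically, that pulling back the twisted torsor $Y_{\mathfrak{a}} = [T \times_{\mathfrak{a}^{-1},S} Y / G]$ along $x$ really does produce the $G$-torsor whose class is $\mathfrak{b} - \mathfrak{a}$. This is just the compatibility of twisting with base change together with the formula recorded just above the lemma for abelian $G$, but because $X$ is a stack rather than a scheme one has to make sure the pullback of the quotient-stack presentation commutes with the quotient — which it does, since $x^*[T\times_{S}Y/G] = [x^*T \times_S x^*Y / G]$ and $x^*T = T$ as $T \to S$ is the fixed torsor $\mathfrak a$ pulled back along $\mathrm{id}_S$. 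There is nothing subtle in the stacky setting beyond this: torsors under a smooth group scheme over a stack still pull back, trivialize exactly when they have sections, and are classified by $H^1$, so the scheme-theoretic proof (e.g. \cite[\S 2.2, Ex.\ 2]{Skorogobatov2001Torsors}) goes through verbatim. I would close by remarking that no properness or separatedness hypothesis on $X$ is used; only that $G$ is smooth so that the torsor/cohomology dictionary applies.
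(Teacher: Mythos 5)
Your argument is the standard one, and it is essentially what the paper's proof points to: the paper simply cites the scheme case (Poonen, Thm.\ 8.4.1) and asserts that the proof for stacks is completely analogous, whereas you write that proof out. The core mechanism --- pull back $\pi$ along $x\colon S \to X$ to get a class $\mathfrak{b} = [x^*Y] \in H^1(S,G)$, observe that $x$ lifts to a twist precisely when the pulled-back twisted torsor has a section, and conclude that $x$ lies in the image of exactly the twist indexed by $\mathfrak{b}$ --- is correct and is the intended argument. Your remarks on the stacky points (representability of $\pi$ so that $x^*Y$ is an honest torsor over $S$, compatibility of the quotient presentation with pullback) are also the right things to check.

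There is, however, one genuine gap: you derive both disjointness and surjectivity from the identity $[x^*Y_{\mathfrak{a}}] = \mathfrak{b} - \mathfrak{a}$, which you yourself flag as valid only for abelian $G$. The lemma is stated for an arbitrary smooth group scheme $G$, and the paper applies it in the non-abelian case --- e.g.\ in Theorem \ref{Strong approximation for quotients} and for universal covers under $\pi_1(X_{k_s})$, which can be $A_4$, $S_4$ or $A_5$. For non-abelian $G$ the set $H^1(S,G)$ is only a pointed set, so ``$\mathfrak{b}-\mathfrak{a}$'' has no meaning, and moreover $Y_{\mathfrak{a}}$ is a torsor under the twisted group $G_{\mathfrak{a}}$ rather than under $G$. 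The repair is standard but must be said: pullback commutes with twisting, so $x^*(Y_{\mathfrak{a}}) \cong (x^*Y)_{\mathfrak{a}}$ as $G_{\mathfrak{a}}$-torsors over $S$, and twisting by the cocycle $\mathfrak{a}$ induces a bijection $H^1(S,G) \to H^1(S,G_{\mathfrak{a}})$ carrying $\mathfrak{a}$ to the distinguished (trivial) class. Hence $(x^*Y)_{\mathfrak{a}}$ is trivial if and only if $[x^*Y] = \mathfrak{a}$, which yields both that $x$ lies in the image of the twist by $[x^*Y]$ and that it lies in no other, with no group structure on $H^1$ invoked. With that substitution your proof covers the full statement.
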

\begin{proof}
If $X$ is a variety then this is \cite[Thm 8.4.1]{Poonen2007Twists}. The proof for stacks is completely analogous.
\end{proof}
\section{Simply connected stacky curves}
We recall the definition of the weighted projective line.
\begin{definition}
Let $S$ be a stack, and $n, m \in \N^2$. The \emph{weighted projective line} $\Proj_S(n,m)$ is the quotient stack $[\A^2_{S} \setminus \{ 0 \}/ \G_m]$ where $\G_m$ acts on $\A^2_{S}$ as 
\begin{equation*}
    \G_m \times \A^2_S \to \A^2_S: (t, x, y) \to (t^n x, t^m y).
\end{equation*}
We will use the traditional notation $\Proj_R(n,m) := \Proj_{\Spec R}(n,m)$.
\end{definition}
This construction is compatible with base change. The stack $\Proj_S(n,m)$ has a dense open subscheme if and only if $S$ has a dense open subscheme and $n$ and $m$ are coprime. If $S = \Spec k$ then the resulting stacky curve has signature $(0; n, m)$.

\begin{lemma}
The only simply connected tame stacky curves over a field $k$ of characteristic $p$ are the genus $0$ curves and the weighted projective spaces $\Proj_k(n,m)$ for coprime $n,m$ with $p \nmid n,m$.
\end{lemma}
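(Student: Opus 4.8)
The plan is to use the classification of fundamental groups of tame stacky curves recalled in \S 2.2, together with the Riemann--Hurwitz formula and the genus bound from \S 2.1. First I would reduce to the geometric statement: by the exact sequence relating $\pi_1(X)$, $\pi_1(X_{k_s})$ and $\Gal(k_s/k)$, simple connectivity over $k$ forces the geometric fundamental group $\pi_1(X_{k_s})$ to be trivial (here I use that $X$ has a $k$-point coming from a stacky point, or more simply that the projection $\pi_1(X) \to \Gal(k_s/k)$ has a section coming from the generic point of the coarse space being a curve with a rational point after base change — in any case the relevant point is that $X$ simply connected $\Rightarrow X_{k_s}$ simply connected). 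So it suffices to classify tame stacky curves over a separably closed field with trivial fundamental group.

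Next I would invoke the description of $\pi_1$ in terms of the signature from \cite[Prop. 5.6]{Behrend2006Uniformization} (valid in the tame $g<1$ range by \S 2.2; note that any stacky curve with non-trivial fundamental group that is \emph{not} of genus $<1$ cannot be simply connected anyway, since e.g. genus $g_{\text{coarse}}\geq 1$ already gives a non-trivial $\pi_1$, and $g \geq 1$ stacky curves have non-trivial, indeed infinite, fundamental group). Thus a simply connected tame stacky curve must have $g < 1$, hence by \S 2.1 we have $g_{\text{coarse}} = 0$ and $r \leq 3$. Now go through the finitely many signatures $(0; e_1, \dots, e_r)$ with $r \leq 3$: for $r = 0$ we get $\Proj^1$, a genus $0$ curve; for $r = 1$ the signature $(0; e_1)$ does not correspond to a stacky curve (a single stacky point of order $>1$ on $\Proj^1$ violates the root-stack / Riemann--Hurwitz constraints — equivalently its "orbifold Euler characteristic" is not of the right form), so this case is empty; for $r = 2$, signature $(0; n, m)$, the fundamental group is $\Z/\gcd(n,m)$, which is trivial exactly when $\gcd(n,m) = 1$, and by the discussion after the definition of the weighted projective line these are precisely the $\Proj_k(n,m)$ with $n,m$ coprime (and $p \nmid nm$ by tameness); for $r = 3$, the spherical triangle groups $(0; e_1, e_2, e_3)$ with $\sum (1 - 1/e_i) < 2$ all have non-trivial (finite, $\geq 2$) fundamental group — these are the dihedral and the $A_4, S_4, A_5$ cases — so none of them is simply connected.

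Finally I would check the converse: genus $0$ curves are simply connected because $\Proj^1_{k_s}$ is, and the twisted forms have no non-trivial étale covers; and $\Proj_k(n,m)$ with $\gcd(n,m)=1$ has trivial geometric $\pi_1$ by the signature computation, hence (having a rational point, e.g. $[1:0]$ or $[0:1]$ suitably interpreted, or just by the section argument) is simply connected over $k$. The main obstacle I expect is the bookkeeping in the $r = 3$ case: one must be sure that \emph{every} spherical triple $(e_1, e_2, e_3)$ genuinely yields a stacky curve with the stated non-trivial fundamental group, rather than being excluded for some subtler reason, and that the non-triviality survives in positive characteristic (this is exactly where tameness and the "quotient by a pro-$p$ group" statement of \S 2.2 enter — since the triangle groups here have order coprime to $p$ when $p \nmid e_i$, the geometric $\pi_1$ is the full finite triangle group and in particular non-trivial). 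Once that is in hand, the list is complete.
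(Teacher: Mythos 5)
Your overall strategy (reduce to the separably closed case, then enumerate signatures using the presentation of the fundamental group) is reasonable and close in spirit to the paper's, which simply cites the uniformization theorem of Behrend--Noohi for the geometric classification. However, there are two genuine problems. The first is your claim that the signature $(0;e_1)$ with a single stacky point ``does not correspond to a stacky curve'': this is false. The root stack $\Proj^1_k[\sqrt[n]{P}] \cong \Proj_k(n,1)$ at a single rational point is a perfectly good tame stacky curve of signature $(0;n)$, and the presentation $\langle x_1 \mid x_1^{e_1},\, x_1 \rangle$ of its geometric fundamental group shows it is simply connected. (The ``bad orbifold'' intuition that teardrops are excluded applies to developable orbifolds, i.e.\ global quotients; nothing in the root-stack or Riemann--Hurwitz formalism for Deligne--Mumford stacks rules them out.) These teardrops are exactly the $\Proj_k(n,1)$ appearing in the statement of the lemma, so as written your enumeration both asserts a false nonexistence claim and omits part of the classification.

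Second, you reduce to the classification over $k_s$ and essentially stop there, but the lemma is a statement over $k$: you must still rule out nontrivial $k$-forms. Knowing that $X_{k_s}$ has signature $(0;n,m)$ with $n,m$ coprime does not by itself give $X \cong \Proj_k(n,m)$; a priori the stacky points could fail to be $k$-rational or the coarse space could be a nontrivial conic. The paper's proof handles this in one line that your proposal is missing: the $\Gal(k_s/k)$-action on the stacky points preserves stabilizer orders, and for coprime $n,m$ not both equal to $1$ the stacky points have pairwise distinct orders (or there is only one of them), so each is Galois-fixed and hence a $k$-point of $X_{\text{coarse}}$; this forces $X_{\text{coarse}} \cong \Proj^1_k$, and then $X \cong \Proj_k(n,m)$ by the root-stack description of tame stacky curves. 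Without this descent step the forward direction of the lemma is not proved. The converse direction in your last paragraph is fine.
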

\begin{proof}
This is true over a separably closed field by \cite[Thm. 1.1]{Behrend2006Uniformization}. We may thus assume that $X$ has signature $(0;n,m)$ with $n, m$ as in the statement. If $n = m = 1$ then $X$ has genus $0$. Otherwise the action of $\Gal(k_s/k)$ on the stacky $k_s$-points has to be trivial since it preserves stabilizers, so the stacky points are $k$-points. It follows that $X \cong \Proj_k(n,m)$ by \cite[Lemma 5.3.10]{Voight2022Canonical}.
\end{proof}

We now prove Theorem \ref{Strong approximation for simply connected stacky curves}. Note that the map $\A^2 \setminus \{0 \} \to \Proj(n,m)$ is a $\G_m$-torsor so Hilbert's Theorem 90 and Lemma \ref{Points on the base and on the torsor} imply that $K^2 \setminus \{0 \} = \A^2 \setminus \{ 0 \}(K) \to \Proj(n,m)(K)$ is surjective for any field $K$. 
\begin{theorem}
Simply connected tame stacky curves $X$ over a global field $k$ satisfy strong approximation, i.e. the image of $X(k)$ in $X(\A_k)$ is dense.
\label{Simply connected curves satisfy strong approximation}
\end{theorem}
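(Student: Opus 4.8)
The plan is to reduce strong approximation for simply connected stacky curves to the known case of $\A^n \setminus \{0\}$ (or equivalently affine space minus a codimension-$\geq 2$ closed set), using the $\G_m$-torsor structure. By the classification lemma just proved, we only need to handle two cases: genus $0$ curves, and the weighted projective lines $\Proj_k(n,m)$ with $n,m$ coprime and prime to $p$. Genus $0$ curves that are simply connected: a smooth proper genus $0$ curve over $k$ with a rational point is $\Proj^1_k$, which satisfies strong approximation only in the weak sense (it is proper, so $X(k) \to X(\A_k)$ has dense image iff $X(k) \neq \emptyset$ and $X$ is proper geometrically connected — indeed for a proper variety strong approximation is equivalent to the Hasse principle plus weak approximation, and $\Proj^1$ satisfies weak approximation). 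Actually since $X$ here is a stacky curve with a dense open \emph{subscheme} and genus $0$, if it is simply connected it has signature $(0)$, so $X \cong X_{\text{coarse}}$ is a smooth proper genus $0$ curve; it has points everywhere locally iff it has a $k$-point, and then it is $\Proj^1_k$ which satisfies weak approximation, hence strong approximation in the proper sense. So the genus $0$ case is classical.

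**The heart of the matter is $\Proj_k(n,m)$.** Here I would use the $\G_m$-torsor $q: \A^2_k \setminus \{0\} \to \Proj_k(n,m)$. By Hilbert 90 and Lemma \ref{Points on the base and on the torsor}, for every field $K \supseteq k$ the map $K^2 \setminus \{0\} \to \Proj_k(n,m)(K)$ is surjective (the remark before the theorem statement records this). I want the analogous statement adelically and compatibly with the topology. The key point is that $\A^2 \setminus \{0\}$ over $k$ satisfies strong approximation: this is a standard fact (e.g. it follows from strong approximation for $\A^2 = \mathbb{A}^2$ together with the fact that removing a codimension-$2$ closed subset does not affect strong approximation, or directly because $\mathbb{A}^2 \setminus \{0\}$ is a homogeneous space under $\mathrm{SL}_2$ — it is $\mathrm{SL}_2 / \mathrm{SL}_1$-ish — with simply connected stabilizer, so Platonov–Rapinchuk-style strong approximation applies; alternatively $\mathbb{A}^2\setminus\{0\}$ admits a fibration over $\mathbb{A}^1$ with $\mathbb{G}_m$ or $\mathbb{A}^1$ fibers and one argues by hand). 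So $k^2 \setminus \{0\}$ is dense in $(\A^2 \setminus\{0\})(\A_k)$.

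**Then the argument is:** given an adelic point $(P_v) \in \Proj_k(n,m)(\A_k)$, I want to lift it to an adelic point of $\A^2_k \setminus \{0\}$, approximate that by a global point $(a,b) \in k^2 \setminus \{0\}$, and push down to get a global point of $\Proj_k(n,m)$ close to $(P_v)$. For this I need: (i) the map $q$ is open, or at least that $q$ on adelic points is open and surjective with enough local sections, so that lifting an adelic point and approximating downstairs is legitimate — this is where Christensen's topology and its good properties (submersions induce topological submersions on $R$-points, compatibility with $\G_m$-torsors) are used; (ii) surjectivity of $q$ on $k_v$-points and on $\mathcal{O}_v$-points for almost all $v$, which again is Hilbert 90 locally plus the fact that $\G_m$-torsors over $\Spec \mathcal{O}_v$ are trivial (since $\Pic \mathcal{O}_v = 0$ and $H^1_{\text{et}}(\mathcal{O}_v, \G_m) = \mathrm{Pic} = 0$). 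Concretely: pick for each $v$ a lift $\tilde P_v \in k_v^2 \setminus \{0\}$ of $P_v$, with $\tilde P_v \in \mathcal{O}_v^2 \setminus \{0\}$ (primitive) for almost all $v$ so that $(\tilde P_v)$ is an adelic point of $\A^2 \setminus \{0\}$; approximate it by $(a,b) \in k^2 \setminus \{0\}$ using strong approximation there; then $q(a,b) \in \Proj_k(n,m)(k)$ approximates $(P_v)$ because $q$ is continuous on adelic points.

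**The step I expect to be the main obstacle** is the topological bookkeeping: verifying that Christensen's adelic topology on $\Proj_k(n,m)(\A_k)$ makes $q$ into an open continuous surjection with local sections, i.e. that "approximate upstairs then push down" actually produces points close in the stacky adelic topology, and that the restricted-product structure is respected (one must check the almost-all-$v$ integrality of the chosen lifts $\tilde P_v$, which hinges on surjectivity of $(\A^2\setminus\{0\})(\mathcal{O}_v) \to \Proj_k(n,m)(\mathcal{O}_v)$ — true because $\G_m$-torsors over $\mathcal{O}_v$ split). I would isolate this as a lemma: for a $\G_m$-torsor $q:Y \to X$ of finite-type stacks over a Dedekind base, $q$ induces an open surjection on adelic points. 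Once that lemma is in hand the theorem follows by the three-line argument above, and combined with the genus $0$ case this proves strong approximation for all simply connected tame stacky curves.
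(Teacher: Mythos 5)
There is a genuine gap, and it is at the exact point you flagged as ``a standard fact'': $\A^2 \setminus \{0\}$ does \emph{not} satisfy strong approximation at all places. The field $k$ is discrete (and closed) in $\A_k$, so $k^2 \setminus \{0\}$ is discrete in $(\A^2\setminus\{0\})(\A_k)$ and certainly not dense. Every version of strong approximation you invoke --- for $\A^n$, for $\mathrm{SL}_2$-homogeneous spaces in the style of Platonov--Rapinchuk --- is strong approximation \emph{away from a nonempty set $S$ of places}; none of them gives density in the full adelic space. Consequently your ``lift, approximate upstairs, push down'' scheme can only produce a global point that approximates the given adelic point away from at least one place, i.e.\ it recovers exactly the weaker statement of Christensen that the paper is explicitly improving upon. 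The rest of your outline (local liftability of adelic points via Hilbert 90 and $\Pic\mathcal{O}_v = 0$, openness of the torsor map in Christensen's topology, and the genus $0$ case via properness and weak approximation) is sound, but it does not close this hole.

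The paper's proof shows what extra idea is needed: one applies strong approximation for $\A^1$ only away from auxiliary places $w_0$ (and later $w_1$), accepting that the resulting pair $(y_1,y_2)\in k^2\setminus\{0\}$ fails to be a primitive integral vector at those places, and then \emph{repairs} the failure downstairs using the weighted $\G_m$-action: by arranging $n \mid w_0(y_1)$ and $m\, w_0(y_1) < n\, w_0(y_2)$, the point $(y_1,y_2)$ lies in the $\G_m(k_{w_0})$-orbit of a point of $(\A^2_{\mathcal{O}_k}\setminus\{0\})(\mathcal{O}_{w_0})$, hence its image is an $\mathcal{O}_{w_0}$-point of $\Proj_{\mathcal{O}_k}(n,m)$ even though the lift is not integral. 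This is where the weights $(n,m)$ and their coprimality genuinely enter, and it has no analogue for the trivial weights, which is consistent with the fact that $\A^2\setminus\{0\}$ itself does not satisfy strong approximation at all places. To fix your write-up you would need to add such a valuation-juggling step (or an equivalent mechanism) at the auxiliary places; without it the argument proves strictly less than the theorem.
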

\begin{proof}
For genus $0$ curves this is known so we may assume that $X = \Proj_k(n,m)$ with $n,m$ coprime. We will use the integral model $\Proj_{\mathcal{O}_k}(n,m)$ to describe $\Proj_k(n,m)(\A_k)$ as a restricted product.

Let $S$ be a finite set of places of $k$ containing all the archimedean ones. For every $v \in S$ let $U_v \subseteq \Proj_{k}(n,m)(k_v)$ be a non-empty open subset. Our goal is to find a point $x \in \Proj_{k}(n,m)(k)$ such that $x \in U_v$ for all $v \in S$ and $x \in \Proj_{\mathcal{O}_k}(n,m)(\mathcal{O}_{v})$ for all $v \not \in S$.

Let $\pi: \A^2_{k} \setminus \{0 \} \to \Proj_{k}(n,m)$ be the $\G_m$-torsor. We will find $(y_1,y_2) \in \A^2_{k}(k) \setminus \{0 \} = k^2 \setminus \{ (0,0)\}$ such that $x := \pi(y_1, y_2)$ works. The map $\pi: k_v^2 \setminus \{0 \} = \A^2_{k} \setminus \{0 \}(k_v) \to \Proj_{k}(n,m)(k_v)$ is surjective so we can choose non-empty opens $V_{1,v}, V_{2,v} \subset k^{\times}_v$ such that $V_{1,v} \times V_{2,v} \subset \pi^{-1}(U_v)$ for all $v \in S$. Now choose non-empty opens $V'_{1,v}, V'_{2,v} \subset k^{\times}_v$ and $W_{1,v}, W_{2,v} \subset k^{\times}_v$ open neighbourhoods of $1$ such that $(W_{i,v})^{m} \cdot V'_{i,v} \subset V_{i,v}$ for $i = 1,2$. These exists since multiplication is continuous.

Fix a place $w_0 \not \in S$. Apply strong approximation for $\A^1$ twice to choose $y'_1, y'_2 \in k^{\times}$ such that: 
\begin{itemize}
    \item $y'_i \in V'_{i,v}$ for $v \in S$ and $i = 1,2$
    \item $y'_1, y'_2 \in \mathcal{O}_v$ and such that at least one of $y'_1, y'_2$ lies in $\mathcal{O}^{\times}_v$ for $v \not \in S \cup \{ w_0 \}$.
\end{itemize}

Let $T$ be the finite set of places $v \not \in S \cup \{ w_0 \}$ such that $y'_1 y'_2 \not \in \mathcal{O}_{v}^{\times}$. Fix another place $w_1 \not \in S \cup T \cup \{w_0\}$. Choose a $z_1 \in k^{\times}$ with strong approximation such that:
\begin{itemize}
    \item $z_1 \in (W_{1,v})^{m}$ for $v \in S$
    \item $z_1 \in \mathcal{O}^{\times}_v$ for $v \in T$
    \item $n \mid w_0(z_1 y_1)$ and $ m w_0(z_1 y'_1) < n w_0( y'_2)$
    \item $z_1 \in \mathcal{O}_v$ for $v \not \in S \cup T \cup \{w_0, w_1\}$.
\end{itemize}

Let $T'$ be the finite set of places $v \not \in S \cup \{w_0, w_1 \}$ such that $y_1 y_2 \not \in \mathcal{O}_v^{\times}$. Apply weak approximation to find a $z_2 \in k^{\times}$ such that:
\begin{itemize}
    \item $z_2 \in W_{2,v}$ for $v \in S$
    \item $z_2 \in \mathcal{O}^{\times}_v$ for $v \in T' \cup \{ w_0 \}$
    \item $ w_1(z_1) > n w_1(z_2)$.
\end{itemize}

We now claim that $y_1 := z_1 y'_2, y_2 := z_2^m y'_2$ works:
\begin{itemize}
    \item For $v \in S$ we have $y_i \in (W_{i,v})^{m} \cdot V'_{i,v} \subset V_{i,v}$ by construction, so $\pi(y_1, y_2) \in U_v$.
    \item We have $(y_1, y_2) \in (\A^2_{\mathcal{O}_k} \setminus \{0\})(\mathcal{O}_v)$ for $v \in T'$.
    \item If $\pi$ is a uniformizer in $\mathcal{O}_{w_0}$ then $(\pi^{-w_0(z_1 y'_1)} y_1, \pi^{-m w_0(z_1 y'_1) /n} y_2)\in \mathcal{O}_v^{\times} \times \mathcal{O}_v$. Hence $(y_1, y_2) \in \G_m(k_{w_0}) \cdot (\A^2_{\mathcal{O}_k} \setminus \{0\})(\mathcal{O}_{w_0})$ which maps into $\Proj_{\mathcal{O}_k}(\mathcal{O}_{w_0})$.
    \item Analogously we have $(z_2^{-m} y_1 , z_2^{-m} y_2) \in \mathcal{O}_v \times \mathcal{O}_v^{\times}$ if $v(z_2) < 0$ for $v \not \in S \cup T' \cup \{ w_0 \}$. Otherwise we have $(y_1, y_2) \in \mathcal{O}_v^{\times} \times \mathcal{O}_v$. 
\end{itemize}
\end{proof}
\begin{remark}
The projection map $\pi: \A^2_{k} \setminus \{0 \} \to \Proj_{k}(n,m)$ played an important role in the proof. This is a special case of a more general construction, it is a \emph{universal torsor} \cite[(2.0.4)]{Colliot1987Descente} of $\Proj_{k}(n,m)$. The fact that  $\Pic(\A^2_{k_s} \setminus \{0 \}) = 0$ $k_s[ \A^2_k \setminus \{0 \}]^{\times} = k_s^{\times}$ imply by \cite[Prop. 2.1.1]{Colliot1987Descente} that it is a universal torsor. 
\end{remark}

All tame stacky curves of genus $g < \frac{1}{2}$ are simply connected so this is an improvement over \cite[Thm. 1.0.1]{Christensen2020Topology} in two ways, it holds for more stacky curves and it gives strong approximation for all places and not only away from a single place.
\section{Cohomological obstructions for stacks}
\begin{definition}
Let $X$ be a Deligne-Mumford stack. Its \emph{Brauer group} is defined as
\begin{equation*}
    \Br X := H^2(X, \G_m).
\end{equation*}
\end{definition}

For the rest of this section let $k$ be a global field unless otherwise mentioned. Let $\Omega_k$ be its set of places. We recall from local class field theory that for each place $v$ of $k$ there exists an injection $\text{inv}_v: \Br k_v \to \Q/\Z$ which is surjective if $v$ is finite.

\begin{definition}
Let $X$ be a separated finite type Deligne-Mumford stack over $k$. The \emph{Brauer-Manin pairing} is defined as the map
\begin{equation*}
    X(\A_k) \times \Br X \to \Q/\Z: (A, (x_v)_{v \in \Omega_k}) \to \sum_{v \in \Omega_k} \text{inv}_v(A(x_v)).
\end{equation*}

The \emph{Brauer-Manin set} $X(\A_k)^{\Br}$ is the left kernel of this paring.

The image $\Br_0(X) := \text{im}(\Br k \to \Br X)$ lies in the right kernel of this pairing due to \cite[Thm. 12.1.8]{Colliot2021Brauer} so we get an induced pairing
\begin{equation*}
    X(\A_k) \times \Br X/\Br_0 X \to \Q/\Z: (A, (x_v)_{v \in \Omega_k}) \to \sum_{v \in \Omega_k} \text{inv}_v(A(x_v)).
\end{equation*}
which we also call the Brauer-Manin pairing.

If $B$ is a subgroup of $\Br X$ or $\Br X/ \Br_0 X$ then we let $X(\A_k)^{B} \subset X(\A_k)$ be the set of adelic points which are orthogonal to $B$ for the Brauer-Manin pairing. Important subquotients are the \emph{algebraic Brauer group} $\Br_1 X := \ker(\Br X \to \Br X_{k_s})$ and its quotient $\Br_a X := \Br_1 X/ \Br_0 X$.

If $\pi: \mathcal{X} \to U$ is an integral model over a Dedekind scheme $U$ then we define the \emph{integral Brauer-Manin set} of $\mathcal{X}$ as $\mathcal{X}(\A_{\mathcal{O}_{k,S}})^{\Br} := \mathcal{X}(\A_{\mathcal{O}_{k,S}}) \cap X(\A_k)^{\Br}.$ If $\mathcal{X}$ is regular then $\Br \mathcal{X}$ is a subgroup of $\Br X$ by \cite[Prop. 2]{Hassett2016Stable}. We then similarly define, $\Br_0 \mathcal{X} := \text{im}( \Br U \to  \Br \mathcal{X})$, $\Br_1 \mathcal{X} := \ker(\Br \mathcal{X} \to H^0(U, R^2 \pi_* \G_m))$ and $\Br_a \mathcal{X} = \Br_1 \mathcal{X}/ \Br_0 \mathcal{X}$.

If $S$ is the set of places of $k$ which do not come from a point in $U$, 
$S$ must in particular contain all archimedean places, then we define the subgroup of \emph{locally constant algebras} to be $\Beh(\mathcal{X}) := \ker( \Br_a \mathcal{X} \to \prod_{v \in S} \Br_a X_{k_v})$.
\end{definition}

Some relevant basic properties.
\begin{proposition}
\hfill
\begin{enumerate}
    \item The Brauer-Manin pairing is well-defined.
    \item The image of $X(k)$ lies in $X(\A_k)^{\Br}$.
    \item For all $A \in \Br X$ and every place $v$ the evaluation map $X(k_v) \to \Q/\Z: x_v \to \emph{inv}(A(x_v))$ is locally constant.
    \item The Brauer-Manin set is closed.
\end{enumerate}
\end{proposition}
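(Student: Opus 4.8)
The plan is to handle the four assertions in turn: $(1)$ and $(4)$ come down to a finiteness statement, $(2)$ is formal, and $(3)$ is reduced to the classical case of schemes over local fields. For $(1)$ the only real content is that, for a fixed $A\in\Br X$ and $(x_v)\in X(\A_k)$, the sum $\sum_v\mathrm{inv}_v(A(x_v))$ has only finitely many nonzero terms; the statement that $\Br_0 X$ lies in the right kernel is \cite[Thm.~12.1.8]{Colliot2021Brauer} applied to $\Spec k$ and pulled back along $X\to\Spec k$. For finiteness I would use the restricted-product description of $X(\A_k)$: there is a fixed integral model $\mathcal X\to\Spec\mathcal O_{k,S}$ and a finite $S'\supseteq S$ with $x_v\in\mathcal X(\mathcal O_v)$ for $v\notin S'$. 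Since $X=\varprojlim_{T\supseteq S}\mathcal X_{\mathcal O_{k,T}}$ along affine transition maps, continuity of \'etale cohomology gives $\Br X=\varinjlim_T\Br\mathcal X_{\mathcal O_{k,T}}$, so $A$ extends to a class over $\mathcal X_{\mathcal O_{k,T}}$ for some finite $T\supseteq S'$; then for $v\notin T$ the point $x_v$ factors through $\mathcal X_{\mathcal O_{k,T}}$ and $A(x_v)\in\Br\mathcal O_v=0$, as $\mathcal O_v$ is a henselian local ring with finite residue field. Assertion $(2)$ is then immediate: for $x\in X(k)$ the local point $x_v$ is the composite $\Spec k_v\to\Spec k\xrightarrow{x}X$, so $A(x_v)=\mathrm{res}_{k_v/k}(x^{*}A)$ and $\sum_v\mathrm{inv}_v(\mathrm{res}_{k_v/k}(x^{*}A))=0$ by the fundamental exact sequence of global class field theory.

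For $(3)$ I would first record the classical scheme case: if $Y$ is of finite type over a local field $K$, $A\in\Br Y$ and $y_0\in Y(K)$, then the henselization $\mathcal O^{h}_{Y,y_0}$ has residue field $K$, so $A$ restricted to it is the pullback of a unique $\gamma\in\Br K$ along the structure map; this equality already holds over some \'etale neighbourhood $(\Omega,\omega_0)\to(Y,y_0)$ with $\omega_0\in\Omega(K)$, whence $A|_{\Omega}$ is pulled back from $\gamma$ and $y^{*}A=\gamma=y_0^{*}A$ for every $y$ in the open set $\mathrm{image}(\Omega(K)\to Y(K))$. Now let $X$ be a separated finite type Deligne--Mumford stack over $K=k_v$ and $x_0\in X(k_v)$; its automorphism group $G_0:=\underline{\mathrm{Aut}}(x_0)$ is a finite \'etale $k_v$-group scheme, and by the \'etale-local structure theory of Deligne--Mumford stacks there is an affine $k_v$-scheme $W$ with $G_0$-action, a $G_0$-fixed point $w_0\in W(k_v)$ and an \'etale morphism $[W/G_0]\to X$ carrying $[w_0]$ to $x_0$; in particular $W\to[W/G_0]\to X$ is \'etale and $x_0$ lies in the open set $U:=\mathrm{image}(W(k_v)\to X(k_v))$. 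Every $y\in U$ lifts to some $w\in W(k_v)$ with $y^{*}A=w^{*}(A|_W)$, so by the scheme case $w\mapsto\mathrm{inv}_v(w^{*}(A|_W))$ is locally constant on $W(k_v)$; since $W(k_v)\to U$ is an open surjection, hence a topological quotient map, $y\mapsto\mathrm{inv}_v(y^{*}A)$ is locally constant on $U$, and therefore on all of $X(k_v)$. The main obstacle is exactly this reduction: one must produce the slice $[W/G_0]\to X$ through the given $k_v$-point $x_0$ (so that $x_0$ itself, not merely its image in $|X|$, lifts to $W(k_v)$, i.e.\ corresponds to the trivial $G_0$-torsor for this presentation), and one must know that in Christensen's topology \cite{Christensen2020Topology} an \'etale morphism of finite-type stacks induces an open map on $k_v$-points, so that $W(k_v)\to U$ is a quotient map; both are standard but stacky, and everything else reduces to the scheme case and to continuity of \'etale cohomology.

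Assertion $(4)$ then follows formally. Writing $Z_A:=\{(x_v)\in X(\A_k):\sum_v\mathrm{inv}_v(A(x_v))=0\}$ we have $X(\A_k)^{\Br}=\bigcap_{A\in\Br X}Z_A$, so it suffices to show each $Z_A$ is closed. Near any $(x_v^0)\in X(\A_k)$, the finiteness argument from $(1)$ yields a finite set $T$ and a basic open neighbourhood $\prod_{v\in T}X(k_v)\times\prod_{v\notin T}\mathcal X(\mathcal O_v)$ on which $A(x_v)=0$ for all $v\notin T$; on this neighbourhood $(x_v)\mapsto\sum_v\mathrm{inv}_v(A(x_v))$ is a finite sum, indexed by $T$, of the locally constant functions of $(3)$, hence locally constant. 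As $\Q/\Z$ is discrete this makes each $Z_A$ open and closed, so $X(\A_k)^{\Br}$ is an intersection of closed sets.
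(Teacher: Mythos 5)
Your proof is correct and follows essentially the same route as the paper: parts (1), (2) and (4) are the standard scheme arguments transported verbatim, and part (3) is reduced to the scheme case by lifting the local point along a cover of $X$ by a scheme and using that this cover induces an open map on $k_v$-points. The only cosmetic difference is that you manufacture an \'etale neighbourhood through $x_0$ via the slice theorem, whereas the paper simply invokes Christensen's theorem that any $x_v \in X(k_v)$ lifts along some smooth cover $U \to X$ with $U$ a scheme.
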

\begin{proof}
Everything but the third statement follows by changing the word scheme by Deligne-Mumford stack in the proofs of \cite[Prop. 12.3.1, Thm. 12.3.2]{Colliot2021Brauer}.

For the third statement, let $x_v \in X(k_v)$. There exists a smooth cover $U \to X$ such that $x_v$ lies in the image of $U(k_v)$ by \cite[Thm. 7.0.7]{Christensen2020Topology}. The statement then follows from the analogous statement for schemes, functoriality of the evaluation map and the fact that the morphism of topological spaces $U(k_v) \to X(k_v)$ is open.
\end{proof}

Recall that the twists of a $G$-torsor $\pi: Y \to X$ allow us to define the \emph{descent obstruction} \cite[\S 5.3]{Skorogobatov2001Torsors}, another obstruction to strong approximation. We define the obstruction set coming from $\pi$ as
\begin{equation*}
    X(\A_k)^{\pi} := \bigcup_{a \in H^1(k, G)}\pi_a(Y_a(\A_k)).
\end{equation*}

It is a classical result of Colliot-Th\'el\`ene and Sansuc \cite{Colliot1987Descente} that the Brauer-Manin obstruction is related to the descent obstruction for groups of multiplicative type. We will need a form of this relation which holds for stacks and in positive characteristic. The following proof is analogous to the proof of \cite[Thm 4.2]{Harari2013DescentBrauer}.
\begin{proposition}
Let $X$ be a separated finite type Deligne-Mumford stack over $k$ and $G$ a smooth group scheme of multiplicative type over $k$. Let $\pi:Y \to X$ be a $G$-torsor. Then we have an inclusion
\begin{equation*}
    X(\A_k)^{\Br_1 X} \subseteq X(\A_k)^{\pi} = \bigcup_{a \in H^1(k, G)}\pi_a(Y_a(\A_k)).
\end{equation*}
\label{Brauer-Manin obstruction is contained in descent obstruction}
\end{proposition}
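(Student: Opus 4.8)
The plan is to run the Colliot-Th\'el\`ene--Sansuc / Harari descent-to-Brauer comparison, using Lemma~\ref{Points on the base and on the torsor} to handle the stacky setting and Poitou--Tate duality to produce a global twisting class. Write $p\colon X\to\Spec k$ for the structure morphism and $\widehat G$ for the Galois module of characters $\Hom(G_{k_s},\G_{m,k_s})$. The tautological pairing $\widehat G\times G\to\G_m$, combined with pullback along $p$ and cup product with the torsor class $[Y]\in H^1(X,G)$, gives a homomorphism
\[
  \beta\colon H^1(k,\widehat G)\longrightarrow H^2(X,\G_m)=\Br X,\qquad \chi\longmapsto p^*\chi\cup[Y].
\]
Since $\widehat G$ has vanishing $H^1$ over the separably closed field $k_s$, the class $p^*\chi$ restricts to $0$ on $X_{k_s}$, whence $\beta(\chi)\in\Br_1 X$. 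It therefore suffices to prove the stronger inclusion $X(\A_k)^{B}\subseteq X(\A_k)^{\pi}$ for $B:=\operatorname{im}\beta\subseteq\Br_1 X$ (indeed $X(\A_k)^{\Br_1 X}\subseteq X(\A_k)^{B}$ since $B\subseteq\Br_1 X$).

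Now fix $(x_v)_v\in X(\A_k)^{B}$. For each place $v$, Lemma~\ref{Points on the base and on the torsor} applied over $k_v$ yields a \emph{unique} $a_v\in H^1(k_v,G)$ with $x_v\in\pi_{a_v}(Y_{a_v}(k_v))$; since $[Y_{a_v}]=[Y]-a_v$ for abelian $G$ and $x_v^*[Y_{a_v}]=0$, this class is simply $a_v=x_v^*[Y]$. Choosing a finite set $S_0$ large enough that $X$, $Y$, $G$ and $\pi$ all spread out to a $\mathcal G$-torsor $\mathcal Y\to\mathcal X$ of finite-type Deligne--Mumford stacks over $U=\Spec\mathcal O_{k,S_0}$ and that $(x_v)$ is integral with respect to $\mathcal X$ outside $S_0$, the point $x_v$ lifts to the integral torsor for $v\notin S_0$, so $a_v$ is unramified there; hence $(a_v)_v$ defines an element of the restricted product $\sideset{}{'}\prod_v H^1(k_v,G)$.

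Next I compute the Brauer--Manin pairing of $\beta(\chi)$ against $(x_v)$. By functoriality of the cup product and the identity $a_v=x_v^*[Y]$,
\[
  \beta(\chi)(x_v)=x_v^*\bigl(p^*\chi\cup[Y]\bigr)=(\chi|_{k_v})\cup a_v\in H^2(k_v,\G_m)=\Br k_v,
\]
so $\sum_v\operatorname{inv}_v\!\bigl(\beta(\chi)(x_v)\bigr)=\sum_v\operatorname{inv}_v\!\bigl((\chi|_{k_v})\cup a_v\bigr)$ is precisely the value at $\chi$ of the Poitou--Tate pairing $\sideset{}{'}\prod_v H^1(k_v,G)\times H^1(k,\widehat G)\to\Q/\Z$. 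The hypothesis $(x_v)\in X(\A_k)^{B}$ forces this to vanish for every $\chi$, i.e.\ $(a_v)_v$ is orthogonal to all of $H^1(k,\widehat G)$. By Poitou--Tate duality for the group of multiplicative type $G$---the exactness of
\[
  H^1(k,G)\longrightarrow\sideset{}{'}\prod_v H^1(k_v,G)\longrightarrow H^1(k,\widehat G)^{\vee}
\]
---there is a global class $a\in H^1(k,G)$ with $a|_{k_v}=a_v$ for all $v$. Then $x_v\in\pi_a(Y_a(k_v))$ for every $v$; enlarging $S_0$ so that $a$ is unramified outside $S_0$, the local lifts can be chosen integral for $v\notin S_0$, and assembling them produces a point of $Y_a(\A_k)$ mapping to $(x_v)$. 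Hence $(x_v)\in\pi_a(Y_a(\A_k))\subseteq X(\A_k)^{\pi}$, as desired.

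The main obstacle is the Poitou--Tate input in the generality required: for $G$ finite the exactness used above is part of the nine-term Poitou--Tate sequence, and for tori it is a theorem of Milne, but for an arbitrary group of multiplicative type over a global field---especially in characteristic $p$, where the $p$-primary part must be handled with care---it has to be assembled by d\'evissage. A second, more technical, point is to verify that $(a_v)_v$ genuinely lies in the restricted product and that compatible integral lifts of the $x_v$ exist once a global $a$ has been produced. The stack-theoretic content, by contrast, is confined to the already-proved Lemma~\ref{Points on the base and on the torsor} and to the formalism of cup products on the \'etale site, both of which carry over verbatim from schemes to Deligne--Mumford stacks.
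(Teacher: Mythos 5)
Your proposal is correct and follows essentially the same route as the paper: cup the torsor class with $H^1(k,\widehat G)$ to land in $\Br_1 X$, extract local twisting classes $a_v$ via the quotient presentation, identify the Brauer--Manin sum with the Poitou--Tate pairing, and lift $(a_v)_v$ to a global class. The one point you flag as an obstacle --- Poitou--Tate for groups of multiplicative type in characteristic $p$ --- is resolved in the paper by observing that smoothness of $G$ forces $\widehat G$ to have no $p$-torsion, after which \cite[Thm.\ I.4.20]{Milne2006Duality} applies directly.
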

\begin{proof}
Let $G' = \Hom(G, \G_m)$ be the group of characters of $G$. It is a $\Gal(k_s/k)$-module which is finitely generated as an abelian group. Let $[Y] \in H^1(X,G)$ be the class corresponding to the torsor $Y \to X$. The cup product coming from the pairing $G \times G' \to \G_m$ defines a morphism 
\begin{equation*}
    H^{1}(k, G') \to H^2(X, \G_m)  = \Br X: \varphi \to \varphi \cup [Y]
\end{equation*}
whose image lands in $\Br_1 X$ because $\varphi = 0$ in $H^1(k_s, G') = 0$.

Let $(x_v)_v \in X(\A_k)^{\Br_1 X}$. Since $X \cong [Y/G]$ as a stack there exists for each place $v$ a class $a_v\in H^1(k_v, G)$ and a $y_v \in  Y_{a_v}(k_v)$ such that $x_v = f_{a_v}(y_v)$.

Recall that $[Y_{a_v}] = [Y] - a_v$ so we compute that for an arbitrary $\varphi \in H^1(k, G')$
\begin{equation}
    (\varphi \cup [Y])(x_v) = \pi_{a_v}^* (\varphi \cup [Y])(y_v) = \pi_{a_v}^* (\varphi \cup ([Y_{a_v}] + a_v))(y_v) = (\varphi \cup a_v) \in \Br k_v.
    \label{Computation of evalutation for Brauer group element coming from torsors}
\end{equation}

The group $G'$ has no $p$-torsion because $G$ is smooth \cite[Rem. 12.5]{Milne2017Algebraic}. The following sequence is thus exact by Poitou-Tate duality \cite[Thm. I.4.20]{Milne2006Duality}.
\begin{equation*}
    H^1(k, G) \to \sideset{}{'}\prod H^1(k_v, G) \xrightarrow{(a_v)_v \to (\varphi \to \sum_v \text{inv}_v(\varphi \cup a_v))}  H^1(k, G')^{\vee}.
\end{equation*}

The assumption $(x_v)_v \in X(\A_k)^{\Br_1 X}$ and \eqref{Computation of evalutation for Brauer group element coming from torsors} imply for all $\varphi \in H^1(k, G')$ that 
\begin{equation*}
    \sum_v \text{inv}_v(\varphi \cup a_v) = \sum_v \text{inv}_v((\varphi \cup [Y])(x_v)) = 0.
\end{equation*} There thus exists an $a \in H^1(k, G)$ such that the image of $a$ in $H^1(k_v, G)$ is $a_v$ for all $v$. Then $(x_v)_v = \pi_a((y_v)_v)$ as desired.
\end{proof}

The following theorem is a generalization of \cite[Thm. 4.1]{Harari2013DescentOpen} to stacks and  positive characteristic.
\begin{theorem}
Let $X$ be a separated finite type Deligne-Mumford stack over a global field $k$ and let $G$ be a smooth group scheme. Let $Y$ be a $G$-torsor over $X$. If the twist $Y_a$ satisfies strong approximation for all $a \in H^1(k,G)$ then descent along $\pi:Y \to X$ is the only obstruction to strong approximation for $X$

If $G$ is of multiplicative type then the algebraic Brauer-Manin obstruction is the only obstruction to strong approximation for $X$.
\label{Strong approximation for quotients}
\end{theorem}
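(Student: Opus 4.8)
The plan is to deduce this directly from Lemma \ref{Points on the base and on the torsor}, Proposition \ref{Brauer-Manin obstruction is contained in descent obstruction}, and the hypothesis on strong approximation for the twists. First I would recall from Lemma \ref{Points on the base and on the torsor} that $X(k) = \coprod_{a \in H^1(k,G)} \pi_a(Y_a(k))$, and similarly (applying the same lemma over each completion $k_v$ and then assembling) that $X(\A_k) = \bigcup_{a} \pi_a(Y_a(\A_k))$, where the union over $a$ ranging through $H^1(k,G)$ is what defines $X(\A_k)^{\pi}$ — note this is genuinely a union and not necessarily disjoint, since an adelic point can be a product of local points lying on different local twists, but the collection of $a \in H^1(k,G)$ whose localisations are everywhere locally soluble is what matters. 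So the task reduces to approximating a given adelic point in $X(\A_k)^{\pi}$ by a rational point.

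Next I would fix an open neighbourhood $\prod_{v \in S} U_v \times \prod_{v \notin S} \mathcal{X}(\mathcal{O}_v)$ of a point $(x_v)_v \in X(\A_k)^{\pi}$ in the restricted-product topology of \cite{Christensen2020Topology}, using the homeomorphism recalled in \S 2 so that basic opens have this shape for a suitable integral model $\mathcal{X}$ and finite $S$. By definition of $X(\A_k)^{\pi}$ there is a single class $a \in H^1(k,G)$ such that $(x_v)_v \in \pi_a(Y_a(\A_k))$; choose $(y_v)_v \in Y_a(\A_k)$ with $\pi_a((y_v)_v) = (x_v)_v$. Since the structure map $\pi_a : Y_a \to X$ is a $G_{\mathfrak a}$-torsor, hence smooth and in particular open on $R$-points for every essentially analytic $R$ (this openness is exactly the input used in the proof of part (3) of the preceding proposition), the preimage $\pi_a^{-1}\bigl(\prod_{v\in S} U_v \times \prod_{v \notin S}\mathcal{X}(\mathcal{O}_v)\bigr)$ contains an open neighbourhood of $(y_v)_v$ in $Y_a(\A_k)$ — here one picks an integral model $\mathcal{Y}_a$ of $Y_a$ with a map to $\mathcal{X}$ extending $\pi_a$, so that $\mathcal{Y}_a(\mathcal{O}_v) \to \mathcal{X}(\mathcal{O}_v)$ for almost all $v$, shrinking $S$ if necessary. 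Now apply the hypothesis that $Y_a$ satisfies strong approximation: there is $y \in Y_a(k)$ lying in this neighbourhood. Then $x := \pi_a(y) \in X(k)$ lies in the prescribed neighbourhood of $(x_v)_v$. This proves the first assertion.

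For the second assertion, suppose $G$ is of multiplicative type and let $(x_v)_v \in X(\A_k)^{\Br_1 X} = X(\A_k)^{\Br_a X}$ (the two coincide since $\Br_0 X$ pairs trivially). By Proposition \ref{Brauer-Manin obstruction is contained in descent obstruction} we have $X(\A_k)^{\Br_1 X} \subseteq X(\A_k)^{\pi}$, so $(x_v)_v \in X(\A_k)^{\pi}$ and the argument of the previous paragraph applies verbatim, producing a rational point in any prescribed neighbourhood. Hence the image of $X(k)$ is dense in $X(\A_k)^{\Br_1 X}$, which is the statement that the algebraic Brauer-Manin obstruction is the only one.

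The main obstacle I anticipate is the bookkeeping at the places outside $S$: one must make the choice of integral models of $X$, of $Y_a$ for the relevant single class $a$, and the compatibility $\mathcal{Y}_a(\mathcal{O}_v) \to \mathcal{X}(\mathcal{O}_v)$ hold simultaneously for all but finitely many $v$, and absorb the finitely many bad places into $S$ without disturbing the restricted-product description — this is routine spreading-out but needs Christensen's compatibility results \cite[Prop. 13.0.2]{Christensen2020Topology} and the openness of $Y_a(k_v) \to X(k_v)$, both of which are available. A second, more cosmetic point is that $X(\A_k)^\pi$ is a union over $H^1(k,G)$ that may be infinite when $G$ is of multiplicative type; but we only ever need the single class $a$ furnished either by hypothesis or by Proposition \ref{Brauer-Manin obstruction is contained in descent obstruction}, so no uniformity over the union is required.
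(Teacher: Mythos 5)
Your proposal is correct and takes essentially the same route as the paper: decompose $X(k)$ via Lemma~\ref{Points on the base and on the torsor}, push density of $Y_a(k)$ in $Y_a(\A_k)$ through the continuous map $\pi_a$ for the single relevant global class $a$, and reduce the Brauer--Manin statement to the descent statement via Proposition~\ref{Brauer-Manin obstruction is contained in descent obstruction}. Two small points: the displayed claim $X(\A_k)=\bigcup_a \pi_a(Y_a(\A_k))$ is false in general (that union is by definition $X(\A_k)^{\pi}$, typically a proper subset --- otherwise there would be no descent obstruction), though your argument never actually uses it; and only continuity of $\pi_a$ on adelic points is needed (preimages of opens are open), so the openness and integral-model bookkeeping you anticipate is unnecessary.
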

\begin{proof}
We have by definition that 
\begin{equation*}
    X(\A_k)^{\pi} = \bigcup_{a \in H^1(k, G)}\pi_a(Y_a(\A_k)).
\end{equation*}
But we know by Lemma \ref{Points on the base and on the torsor} that
\begin{equation*}
    X(k) = \coprod_{a \in H^1(k, G)}\pi_a(Y_a(k)).
\end{equation*}
Since the $\pi_a$ are continuous in the adelic topology and the image $Y_a(k)$ is dense in $Y_a(\A_k)$ we conclude that the image of $X(k)$ is dense in $X(\A_k)^{\pi}$.

If $G$ has multiplicative type then $X(\A_k)^{\Br_1 X} \subseteq X(\A_k)^{\pi}$ by Proposition \ref{Brauer-Manin obstruction is contained in descent obstruction}. The image of $X(k)$ is dense in $X(\A_k)^{\pi}$ so it is also dense in the subset $X(\A_k)^{\Br_1 X}$.
\end{proof}
We will identify \'etale group schemes over $k$ with groups with a $\Gal(k_s/k)$-action. We also recall that for a geometrically connected algebraic stack the exact sequence
\begin{equation}
    1 \to \pi_1(X_{k_s}) \to \pi_1(X) \to \Gal(k_s/k) \to 1
\end{equation}
induces a $\Gal(k_s/k)$-action on $\pi_1(X_{k_s})$.

\begin{lemma}
Let $X$ be a geometrically connected finite type algebraic stack over a field $k$ such that $X(k) \neq \emptyset$. Let $G$ be an \'etale group scheme and $f: \pi_1(X_{k_s}) \to G_{k_s}$ a $\Gal(k_s/k)$-equivariant morphism. There then exists a $G$-torsor $\pi: T \to X$ such $\pi_{k_s}: T_{k_s} \to X_{k_s}$ corresponds to $f \in H^1(\pi_1(X_{k_s}), G_{k_s})$.
\label{Universal covers exist if there is a rational point}
\end{lemma}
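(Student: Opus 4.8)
The plan is to use the rational point to split the homotopy exact sequence and then ``extend $f$ by the identity on the Galois group''.

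First I would fix a geometric point $\bar x$ lying over the rational point $x \in X(k)$ and use it as base point for all fundamental groups; its image under $X \to \Spec k$ is a geometric point of $\Spec k$, so $\pi_1(\Spec k) = \Gal(k_s/k)$ with respect to it. Applying functoriality of $\pi_1$ to $x : \Spec k \to X$, and using that the composite $\Spec k \xrightarrow{x} X \to \Spec k$ is the identity, one obtains a continuous section $s : \Gal(k_s/k) \to \pi_1(X)$ of the projection $p : \pi_1(X) \to \Gal(k_s/k)$ in the exact sequence $1 \to \pi_1(X_{k_s}) \to \pi_1(X) \to \Gal(k_s/k) \to 1$. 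Hence $s$ gives an isomorphism $\pi_1(X) \cong \pi_1(X_{k_s}) \rtimes \Gal(k_s/k)$ in which the semidirect product action of $\sigma$ on $\gamma \in \pi_1(X_{k_s})$ is $\sigma(\gamma) := s(\sigma)\,\gamma\,s(\sigma)^{-1}$; by construction this is exactly the $\Gal(k_s/k)$-action recalled before the statement (it is independent of $\bar x$ up to an inner automorphism of $\pi_1(X_{k_s})$).

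Next I would invoke the torsor--homomorphism dictionary, which holds for geometrically connected finite type Deligne--Mumford stacks just as for schemes: writing $\widetilde G := G(k_s) \rtimes \Gal(k_s/k)$ and $q : \widetilde G \to \Gal(k_s/k)$ for the projection, the set of $G$-torsors over $X$ is in bijection with the set of $G(k_s)$-conjugacy classes of continuous homomorphisms $\rho : \pi_1(X) \to \widetilde G$ with $q \circ \rho = p$, and under this bijection the base change $T_{k_s} \to X_{k_s}$ of the torsor attached to $\rho$ corresponds to the class of $\rho|_{\pi_1(X_{k_s})} : \pi_1(X_{k_s}) \to \ker q = G(k_s)$ in $H^1(\pi_1(X_{k_s}), G_{k_s})$. (The case $X = \Spec k$ recovers the usual description of $H^1(k, G)$ by cocycles $\Gal(k_s/k) \to G(k_s)$.) With these identifications the construction is immediate: define $\rho(\gamma, \sigma) := (f(\gamma), \sigma)$, which is continuous and satisfies $q \circ \rho = p$, and compare
\[
\begin{gathered}
\rho\big((\gamma_1,\sigma_1)(\gamma_2,\sigma_2)\big) = \big(f(\gamma_1 \cdot \sigma_1(\gamma_2)),\, \sigma_1\sigma_2\big),\\
\rho(\gamma_1,\sigma_1)\,\rho(\gamma_2,\sigma_2) = \big(f(\gamma_1)\cdot \sigma_1(f(\gamma_2)),\, \sigma_1\sigma_2\big);
\end{gathered}
\]
these agree precisely because $f$ is $\Gal(k_s/k)$-equivariant, so $\rho$ is a homomorphism. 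Letting $\pi : T \to X$ be the $G$-torsor corresponding to $\rho$, and noting $\rho|_{\pi_1(X_{k_s})} = f$, the dictionary gives that $\pi_{k_s} : T_{k_s} \to X_{k_s}$ corresponds to $f$, as required.

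The only real work is in the dictionary of the third paragraph: transporting the SGA1-style classification of torsors by homomorphisms out of the étale fundamental group to Deligne--Mumford stacks, and checking that the conjugation action coming from $s$ agrees with the $\Gal(k_s/k)$-action in the hypothesis (up to the harmless inner ambiguity from the choice of $\bar x$, under which the isomorphism class of the constructed torsor does not change). Once that is granted, the construction itself is the formal ``extend by the identity on $\Gal(k_s/k)$'' above. Alternatively one can argue by Galois descent: $f$ being equivariant provides, for each $\sigma$, a canonical isomorphism between the $G_{k_s}$-torsor classified by $f$ and its $\sigma$-conjugate, and canonicity forces the cocycle condition, so this torsor descends along $k_s/k$ to $X$ after the customary reduction to a finite subextension; but the homomorphism argument is cleaner.
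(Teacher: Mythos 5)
Your proof is correct and is essentially the paper's argument in different clothing: the paper constructs the cocycle $\varphi(\sigma) = f(\sigma P_*(p(\sigma))^{-1})$ directly on $\pi_1(X)$ and verifies the cocycle condition using equivariance of $f$, which under your semidirect-product splitting is exactly the homomorphism $\rho(\gamma,\sigma) = (f(\gamma),\sigma)$ and the same computation. The translation between nonabelian cocycles and homomorphisms to $G(k_s) \rtimes \Gal(k_s/k)$ is standard, so there is no substantive difference.
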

\begin{proof}
Torsors are classified by cohomology classes so it suffices to find a cocycle $\pi_1(X) \to G_{k_s}$ whose restriction to $\pi_{1}(X_{k_s})$ is equal to $f$. 

Choose a point $P \in X(k)$, this induces a section $P_*: \Gal(k_s/k) \to \pi_1(X)$ of the projection $p: \pi_1(X) \to \Gal(k_s/k)$. We then consider the cochain
\begin{equation*}
    \varphi:\pi_1(X) \to G_{k_s}: \sigma \to f( \sigma P_*(p(\sigma))^{-1}).
\end{equation*}
Its restriction to $\pi_{1}(X_{k_s})$ is $f$. It is a cocycle because for $\sigma, \tau \in \pi_1(X)$ we have
\begin{equation*}
    \varphi(\sigma \tau) =  f( \sigma \tau P_*(p(\tau))^{-1} P_*(p(\sigma))^{-1}) = \varphi(\sigma) f(P_*(p(\sigma)) \tau P_*(p(\tau))^{-1} P_*(p(\sigma))^{-1}).
\end{equation*}
But the conjugation $P_*(p(\sigma)) \tau P_*(p(\tau))^{-1} P_*(p(\sigma))^{-1}$ is by definition the result of $\sigma$ acting on $\tau P_*(p(\tau))^{-1} \in \pi_{1}(X_{k_s})$. And $f$ is $\Gal(k_s/k)$-equivariant so the right-hand side is equal to $\varphi(\sigma) \sigma \varphi(\tau)$.
\end{proof}
\begin{remark}
If $G$ is abelian then this can also be proven using the $5$-term exact sequence coming from the Hochschild-Serre spectral sequence.

The $G$-torsor $\pi$ is not unique, but it is unique up to twists.

If $\pi_1(X_{k_s})$ is finite and $f = \text{id}: \pi_1(X_{k_s}) \to \pi_1(X_{k_s})$ is the identity then we will call the morphism $\pi: T \to X$ a \emph{universal cover}.
\end{remark}
\begin{theorem}
If $X$ is a tame stacky curve of genus $g < 1$ over $k$ and $X(\A_k) \neq \emptyset$ then there exists an universal cover $T \to X$, which is a torsor under the finite group scheme $\pi_1(X_{k_s})$. Descent along $T$ is the only obstruction to strong approximation.

If the group $\pi_1(X_{k_s})$ is abelian then the algebraic Brauer-Manin obstruction is the only obstruction to strong approximation.
\end{theorem}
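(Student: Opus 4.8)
The plan is to reduce to the case $X_{\text{coarse}} \cong \mathbb{P}^1_k$, to construct the universal cover $T \to X$ by a local-to-global argument, to observe that $T$ and all its twists are simply connected, and then to invoke Theorem~\ref{Strong approximation for quotients}. First I would treat the simply connected case separately: there $\pi_1(X_{k_s}) = 1$, the universal cover is $X$ itself, descent along it is vacuous, strong approximation holds by Theorem~\ref{Simply connected curves satisfy strong approximation}, and $\Br_1 X = \Br_0 X$, so the assertion about the algebraic Brauer-Manin obstruction is automatic. Assume now $\Gamma := \pi_1(X_{k_s}) \neq 1$. Since $g < 1$ we have $g_{\text{coarse}} = 0$, so $X_{\text{coarse}}$ is a smooth conic which acquires a point over every $k_v$ (because $X(\A_k) \neq \emptyset$), hence $X_{\text{coarse}} \cong \mathbb{P}^1_k$ by Hasse--Minkowski. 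By the description of fundamental groups of genus $< 1$ stacky curves recalled in \S 2, $\Gamma$ is a finite group of order prime to $p$ — cyclic, dihedral, $A_4$, $S_4$ or $A_5$ — and the geometric universal cover of $X_{k_s}$ is again a stacky curve, simply connected and, by Riemann--Hurwitz, of genus $< 1$.

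The crux is to realise the universal cover over $k$. For each place $v$ the set $X(k_v)$ is nonempty, so applying Lemma~\ref{Universal covers exist if there is a rational point} over $k_v$ to $f = \text{id}$ gives a universal cover $T_v \to X_{k_v}$, a torsor under $\Gamma$. The obstruction to gluing the $T_v$ into a global universal cover $T \to X$ is a gerbe banded by the finite étale group scheme $\Gamma$ which is neutral over every $k_v$; whether it is neutral over $k$ is governed by $\Sha^2(k, Z(\Gamma))$, the centre with its Galois action (for abelian $\Gamma$ this obstruction is the Hochschild--Serre transgression of $\text{id}$, a priori in $H^2(k, \Gamma)$ but landing in $\Sha^2$ as it dies at every place; for non-abelian $\Gamma$ one uses Giraud's theory, under which gerbes banded by a fixed band are controlled by $H^2$ of its centre). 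I would then verify $\Sha^2(k, Z(\Gamma)) = 0$ in each case: $Z(\Gamma)$ is trivial (for $A_4$, $S_4$, $A_5$ and the odd dihedral $\Gamma$), or $\mu_n$ possibly twisted by the quadratic character permuting two stacky points (for cyclic $\Gamma$, and, with $n = 2$, for the even dihedral $\Gamma$), or $(\Z/2)^2$ with a permutation action of a subgroup of $S_3$ (for signature $(0; 2, 2, 2)$). By Poitou--Tate duality it suffices that $\Sha^1$ of the Cartier dual vanish; the Cartier dual is either a finite cyclic module whose monodromy group is cyclic — so its $\Sha^1$ vanishes by Chebotarev — or $(\Z/2)^2$ with a subgroup of $S_3$ acting, where one computes directly that already $H^1(S_3, (\Z/2)^2) = 0$; no Grunwald--Wang pathology arises. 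Thus $T \to X$ exists; in the cyclic cases one can alternatively exhibit it directly as a power map onto $X$, after a quadratic descent when the stacky points are conjugate.

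It remains to check the hypothesis of Theorem~\ref{Strong approximation for quotients}: that each twist $T_a$, $a \in H^1(k, \Gamma)$, satisfies strong approximation. Since $a$ becomes trivial over $k_s$ we have $(T_a)_{k_s} \cong T_{k_s}$, so $T_a$ is a form of the geometric universal cover of $X_{k_s}$; being finite étale over the tame stacky curve $X$ of genus $< 1$, it is itself a simply connected tame stacky curve of genus $< 1$, and hence satisfies strong approximation by Theorem~\ref{Simply connected curves satisfy strong approximation}. Theorem~\ref{Strong approximation for quotients} then yields that descent along $T \to X$ is the only obstruction to strong approximation for $X$. Finally, if $\Gamma$ is abelian then, being finite étale abelian of order prime to $p$, it is of multiplicative type, and the second half of Theorem~\ref{Strong approximation for quotients} gives that the algebraic Brauer-Manin obstruction is the only obstruction to strong approximation.

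The step I expect to be the main obstacle is the existence of $T$ over $k$: one must make the gerbe-theoretic obstruction precise, especially for non-abelian $\Gamma$, and control the possible $\Gamma$ sharply enough to see that all the relevant $\Sha^2$-groups vanish. This is exactly where the hypothesis $g < 1$ is used — it confines $\Gamma$, and hence $Z(\Gamma)$, to a short list sidestepping any Grunwald--Wang obstruction; everything past that point is formal and uses only results already proved in the paper.
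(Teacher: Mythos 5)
Your overall architecture matches the paper's: reduce to $X_{\text{coarse}} \cong \Proj^1_k$, produce a universal cover $T \to X$, note that $T$ and all its twists are simply connected stacky curves, and feed this into Theorem~\ref{Simply connected curves satisfy strong approximation} and Theorem~\ref{Strong approximation for quotients} (with the multiplicative-type clause for abelian $\pi_1(X_{k_s})$). Those outer steps are fine.

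The problem is the crux, the existence of $T$ over $k$. You treat this as a genuine local-to-global problem and propose a gerbe-theoretic program (local universal covers $T_v$, a gluing obstruction in $\Sha^2(k, Z(\Gamma))$, Poitou--Tate, and a case-by-case vanishing argument over the list of possible $\Gamma$), which you then leave as a sketch and yourself flag as the main obstacle. But there is no local-to-global problem here at all: once $X_{\text{coarse}} \cong \Proj^1_k$, the dense open subscheme of $X$ on which $X \to X_{\text{coarse}}$ is an isomorphism is the complement of the finite stacky locus in $\Proj^1_k$, and $\Proj^1(k)$ is infinite, so $X(k) \neq \emptyset$. This is exactly the hypothesis of Lemma~\ref{Universal covers exist if there is a rational point}, which then produces $T \to X$ over $k$ directly by applying $f = \mathrm{id}$ (the rational point splits $\pi_1(X) \to \Gal(k_s/k)$ and the explicit cocycle does the rest) --- no gerbes, no $\Sha^2$, no case analysis. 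As written, your proof is incomplete precisely where it departs from this: the neutrality of the putative gerbe for non-abelian bands is asserted to reduce to $H^2(k, Z(\Gamma))$ without justification (for non-abelian $L$ the set of $L$-banded gerbes is a pseudo-torsor under $H^2(k, Z(L))$, but identifying which classes are neutral, and whether local neutrality plus vanishing of a $\Sha^2$ forces global neutrality, requires an argument you do not give), and the $\Sha^2$ computations are only promised. The missing idea is simply that $X(\A_k) \neq \emptyset$ already forces $X(k) \neq \emptyset$ for these curves, which collapses your hardest step to an application of a lemma already in the paper.
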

\begin{proof}
If $X(\A_k) \neq \emptyset$ then $X_{\text{coarse}}(\A_k) \neq \emptyset$. The coarse moduli space $X_{\text{coarse}}$ has genus $0$ so $X_{\text{coarse}} \cong \Proj^1$ and thus has a rational point lying on the dense open $U$ on which $X \to X_{\text{coarse}}$ is an isomorphism. So $X(k) \neq \emptyset$ and the group $\pi_1(X_{k_s})$ is finite and has order coprime to $p$ because of \cite[Prop 5.5]{Behrend2006Uniformization}. We deduce from Lemma \ref{Universal covers exist if there is a rational point} that $X$ has a universal cover $T \to X$.

The universal cover $T$ and all of its twists are simply connected stacky curves so combining Theorem \ref{Simply connected curves satisfy strong approximation} and Theorem \ref{Strong approximation for quotients} gives the desired statement.

If $\pi_1(X_{k_s})$ is abelian then $T \to X$ is a torsor under a group of multiplicative type so we can apply the second part of Theorem \ref{Strong approximation for quotients}.
\end{proof}
We have thus proven Theorem \ref{Brauer-Manin obstruction is only obstruction to strong approximation}. An interesting corollary is.
\begin{corollary}
If $X$ is stacky curve of genus $g < \frac{5}{6}$ then the only obstruction to strong approximation is the Brauer-Manin obstruction.
\end{corollary}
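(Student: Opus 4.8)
The plan is to reduce the statement to Theorem \ref{Brauer-Manin obstruction is only obstruction to strong approximation}(2): it suffices to show that a tame stacky curve $X$ of genus $g < \frac{5}{6}$ automatically has finite abelian geometric fundamental group, and then the density of $X(k)$ in $X(\A_k)^{\Br}$ is immediate from that theorem.

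First I would apply the genus formula \eqref{Genus of stacky curve}. Since $\frac{5}{6} < 1$ we have $g < 1$, so $g_{\text{coarse}} = 0$ and $r \leq 3$, and (by the discussion in \S 2.2) $\pi_1(X_{k_s})$ is the finite group attached by \cite[Prop. 5.5]{Behrend2006Uniformization} to the signature $(0; e_1, \dots, e_r)$: the quotient of the free group on loops $\gamma_1, \dots, \gamma_r$ around the stacky points by the relations $\gamma_i^{e_i} = 1$ and $\gamma_1 \cdots \gamma_r = 1$. If $r \leq 2$ this group is cyclic (of order $\gcd(e_1, e_2)$, with the convention that an absent index contributes $1$), hence abelian, and we are done for those signatures.

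It remains to handle $r = 3$, where $\pi_1(X_{k_s})$ is the von Dyck group on $\gamma_1,\gamma_2,\gamma_3$ with $\gamma_i^{e_i} = 1$ and $\gamma_1\gamma_2\gamma_3 = 1$, which need not be abelian in general; here I would use the genus bound to pin down the signature. Ordering $2 \leq e_1 \leq e_2 \leq e_3$, the formula \eqref{Genus of stacky curve} reads $g = \frac{1}{2}\bigl(3 - \tfrac{1}{e_1} - \tfrac{1}{e_2} - \tfrac{1}{e_3}\bigr)$, so $g < \frac{5}{6}$ is equivalent to $\tfrac{1}{e_1} + \tfrac{1}{e_2} + \tfrac{1}{e_3} > \tfrac{4}{3}$. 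If $e_1 \geq 3$ the left side is $\leq 1$; if $e_1 = 2$ and $e_2 \geq 3$ it is $\leq \tfrac12 + \tfrac23 = \tfrac76 < \tfrac43$; and if $e_1 = e_2 = 2$ with $e_3 \geq 3$ it is $\leq \tfrac12 + \tfrac12 + \tfrac13 = \tfrac43$, which is not strictly larger. Hence the only three-point signature with $g < \frac{5}{6}$ is $(0; 2, 2, 2)$, for which $\gamma_1\gamma_2\gamma_3 = 1$ together with $\gamma_i^2 = 1$ forces $\gamma_1\gamma_2 = \gamma_2\gamma_1$, so $\pi_1(X_{k_s}) \cong (\Z/2)^2$ is abelian.

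In every case $\pi_1(X_{k_s})$ is finite and abelian, so Theorem \ref{Brauer-Manin obstruction is only obstruction to strong approximation}(2) applies and yields the corollary. The only genuine content is the numerical observation that $\frac{5}{6}$ is exactly the genus of $(0; 2, 2, 3)$, the "next" three-point signature above $(0; 2, 2, 2)$ — the latter being precisely the unique three-point signature whose orbifold fundamental group is abelian; everything else is bookkeeping, and I do not anticipate a serious obstacle.
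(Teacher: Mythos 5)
Your proof is correct and follows the same route as the paper: reduce to Theorem \ref{Brauer-Manin obstruction is only obstruction to strong approximation}(2) by showing the geometric fundamental group is finite abelian for $g < \frac{5}{6}$. The paper simply cites \cite[Prop. 5.5]{Behrend2006Uniformization} for this fact, whereas you unpack it via the genus formula to isolate $(0;2,2,2)$ as the only non-cyclic case and check its von Dyck group is abelian --- a harmless elaboration of the same argument.
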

\begin{proof}
The geometric fundamental group of all tame stacky curves of genus $g < \frac{5}{6}$ is abelian by \cite[Prop. 5.5]{Behrend2006Uniformization}.
\end{proof}
\begin{remark}
It will follow from the next section that if $X$ is a stacky curve of signature $(0 ; 2,3,5)$ then $\Br_1 X = \Br k$ and that if $k$ is a number field then $\Br X = \Br_1 X$. There is thus no Brauer-Manin obstruction to strong approximation. But $\pi_1(X_{k_s})$ is isomorphic to the icosahedral group $A_5$ by \cite[Prop. 5.5]{Behrend2006Uniformization}. The stack $X$ thus has a non-trivial \'etale cover which implies that it does not satisfy strong approximation by \cite[Thm. 8.4.10]{Poonen2017Rational}. (This theorem is only proven for varieties, but the argument also applies to stacks).
\end{remark}
\section{Brauer groups of stacky curves}
\begin{proposition}
Let $X$ be a stacky curve over an algebraically closed field $k$. Then $\Br X = 0$.
\end{proposition}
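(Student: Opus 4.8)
The plan is to run the Leray spectral sequence $E_2^{p,q} = H^p(C, R^q f_* \G_m) \Rightarrow H^{p+q}(X, \G_m)$ for the coarse map $f \colon X \to C := X_{\text{coarse}}$ and to show that all three $E_2$-terms of total degree $2$ vanish. Since $C$ is a smooth proper curve over $k = \overline k$, Tsen's theorem gives $\Br C = H^2(C, \G_m) = 0$, and one checks $f_* \G_m = \G_m$ (the $\mu_n$-invariant units of the local models are the units of the base), so $E_2^{2,0} = H^2(C, \G_m) = 0$. The whole content is therefore the computation of $R^1 f_* \G_m$ and $R^2 f_* \G_m$, which are supported on the finitely many stacky points $P_1, \dots, P_r$ because $f$ is an isomorphism away from the stacky locus.

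To compute the stalks at a stacky point $P_j$ of order $n = e_j$ (coprime to $p$ by tameness), I would use that $X$ is a root stack over $C$, so \'etale-locally at $P_j$ it is the quotient stack $Y := [\Spec B / \mu_n]$ with $B = A[z]/(z^n - \pi)$, where $A = \mathcal{O}_{C, \overline{P_j}}^{\mathrm{sh}}$ is a strictly henselian DVR with uniformizer $\pi$; here $B$ is again a strictly henselian DVR, $\Spec B \to Y$ is a $\mu_n$-torsor, $B^{\mu_n} = A$ and $(B^\times)^{\mu_n} = A^\times$. Because $B$ is a strictly henselian DVR with algebraically closed residue field, $H^1(\Spec B, \G_m) = \Pic B = 0$ and $H^2(\Spec B, \G_m) = \Br B = \Br \overline k = 0$, so the Hochschild--Serre spectral sequence for the $\mu_n$-torsor $\Spec B \to Y$ collapses to $H^q(Y, \G_m) = H^q(\mu_n, B^\times)$; and $\mu_n \cong \Z/n$ as a group scheme since $p \nmid n$ and $k = \overline k$. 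Thus $(R^1 f_* \G_m)_{\overline{P_j}} = H^1(\Z/n, B^\times)$ is finite, so $R^1 f_* \G_m$ is a skyscraper sheaf and $E_2^{1,1} = H^1(C, R^1 f_* \G_m) = 0$, skyscraper sheaves on a curve having no higher cohomology.

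For the last term it remains to see $R^2 f_* \G_m = 0$, i.e. $H^2(\Z/n, B^\times) = 0$. By the periodicity of the cohomology of a finite cyclic group this group is $(B^\times)^{\mu_n}/\Norm(B^\times) = A^\times / \Norm_{B/A}(B^\times)$, where $\Norm_{B/A}(b) = \prod_{\zeta \in \mu_n} b(\zeta z)$. This vanishes: $A^\times$ is $n$-divisible (Hensel's lemma, using $p \nmid n$, together with $\overline k^\times = (\overline k^\times)^n$), and $\Norm_{B/A}(a) = a^n$ for $a \in A^\times$, so $A^\times = (A^\times)^n \subseteq \Norm_{B/A}(B^\times)$. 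Hence $E_2^{0,2} = H^0(C, R^2 f_* \G_m) = 0$, and the spectral sequence yields $\Br X = H^2(X, \G_m) = 0$. The main obstacle is precisely this stalk computation at the stacky points --- extracting the local model $[\Spec B / \mu_n]$, reducing $H^\ast(Y, \G_m)$ to the group cohomology of $\Z/n$, and running the norm/divisibility argument that kills $H^2$; the remaining ingredients (Tsen's theorem, $f_* \G_m = \G_m$, and the vanishing of skyscraper cohomology) are routine.
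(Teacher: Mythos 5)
Your argument is correct, but it takes a genuinely different route from the paper. The paper's proof is a one-liner: since $X$ is regular, $\Br X$ injects into $\Br k(X)$ (citing \cite[Prop. 2]{Hassett2016Stable}), and $k(X)$ is the function field of a curve over an algebraically closed field, so $\Br k(X) = 0$ by Tsen's theorem. You instead run the Leray spectral sequence for the coarse map $f\colon X \to X_{\text{coarse}}$ and kill the three terms $E_2^{2,0}$, $E_2^{1,1}$, $E_2^{0,2}$ separately; the only nontrivial input beyond Tsen is the stalk computation at a stacky point, where the local model $[\Spec B/\mu_n]$ with $B = A[z]/(z^n-\pi)$ reduces everything to cyclic group cohomology of $B^\times$, and the $n$-divisibility of $A^\times$ (Hensel plus tameness) kills $\hat H^0 = A^\times/\Norm_{B/A}(B^\times)$. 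Both arguments are sound; the paper's is shorter because it outsources all the work to the injectivity statement for regular Deligne--Mumford stacks, whereas yours is self-contained, makes the role of tameness explicit (it is exactly what makes $T^n - u$ separable over the residue field), and computes the sheaves $R^q f_* \G_m$ along the way, which is more information than the statement requires. Two small points of hygiene: the vanishing of $H^1(C, R^1 f_*\G_m)$ needs only that $R^1 f_*\G_m$ is supported on finitely many closed points (so it is a pushforward from a disjoint union of copies of $\Spec \overline{k}$), not the finiteness of its stalks; and identifying the stalk of $R^q f_*\G_m$ at $\overline{P_j}$ with $H^q$ of the base change to $\Spec \mathcal{O}_{C,\overline{P_j}}^{\mathrm{sh}}$ uses the compatibility of \'etale cohomology with filtered limits of qcqs (stacky) schemes, which holds for arbitrary abelian sheaves but deserves a citation since $\G_m$ is not torsion.
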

\begin{proof}
The Brauer group injects into $\Br k(X)$ by \cite[Prop. 2]{Hassett2016Stable}, which is trivial by Tsen's theorem \cite[Thm. 6.2.8]{Gille2017Central}.
\end{proof}

This implies that $\Br_1 X = \Br X$ over a perfect field $k$. The Hochschild-Serre spectral sequence implies that $\Br_1 X/\Br_0 X$ injects into $H^1(k, \Pic X_{k_s})$ \cite[Prop. 4.3.2]{Colliot2021Brauer}. In this section we will compute $H^1(k, \Pic X_{k_s})$ for a tame stacky curve with coarse moduli space $\Proj^1_k$.

The Picard group $\Pic X$ of a stacky curve $X$ over a field $k$ can be described as a class group of divisors \cite[Lemma 5.4.5]{Voight2022Canonical}. Let $\mathcal{P} \subset X_{\text{coarse}}$ be the stacky locus. For any point $P \in \mathcal{P}$ let $e_P$ be the order of the stabilizer of the point in $X$ lying above it and denote by $[P] \in \Pic X$ the class of the unique integral substack in $X$ lying above it. 

There exists a degree map $\Pic X \xrightarrow{\deg} \Q$, which can in general take non-integral values unlike the case of non-stacky curves. We have $\deg [P] = \frac{[k(P): k]}{e_P}$. Let $d_P$ be the denominator of this fraction in lowest terms, i.e. $d_P := e_P/\gcd([k(P): k], e_P)$. Let $d_X := \text{lcm}_{P \in \mathcal{P}}(d_P)$.

We define $\Pic^0 X$ as the kernel of the degree map $\Pic X \xrightarrow{\deg} \Q$. We then have
\begin{lemma}
Let $X$ be a tame stacky curve with $X_{\emph{coarse}} \cong \Proj^1_k$. The image of $\deg$ is $\frac{1}{d_X} \Z$ and we have an exact sequence
\begin{equation*}
     0 \to \Pic^0 X \to \bigoplus_{P \in \mathcal{P}} \Z/e_P \Z[P] \xrightarrow{[P] \to \deg [P]}  \left(\frac{1}{d_X}\Z\right) / \Z \to 0
\end{equation*}
\label{Picard group of stacky curve with coarse genus 0}
\end{lemma}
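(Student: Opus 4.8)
The plan is to compute $\Pic X$ by descending to the Picard group of the coarse space $\pi\colon X \to X_{\text{coarse}} = \Proj^1_k$. The first step is to establish the short exact sequence
\[
0 \longrightarrow \Z \xrightarrow{\ \pi^*\ } \Pic X \xrightarrow{\ q\ } \bigoplus_{P \in \mathcal P} \Z/e_P\Z\,[P] \longrightarrow 0,
\]
and the second step is to combine it with the degree map by a snake-lemma argument, where the first map in the asserted sequence will be the restriction of $q$.

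For the short exact sequence I would use the description of $\Pic X$ as a divisor class group \cite[Lemma 5.4.5]{Voight2022Canonical} together with the root-stack structure of a tame stacky curve (the field case of Lemma \ref{Relative stacky curves are smooth}). Codimension-one integral closed substacks of $X$ are in bijection with closed points of $\Proj^1_k$, and for a closed point $Q$ the local description \eqref{Local description of relative stacky curve} of $\pi$ (with $e_Q := 1$ when $Q \notin \mathcal P$) shows $\operatorname{ord}_{[Q]_X}(f) = e_Q \cdot \operatorname{ord}_Q(f)$ for $f \in k(X)^\times = k(\Proj^1_k)^\times$. Identifying $\mathrm{Div}(X) \cong \mathrm{Div}(\Proj^1_k)$ by $[Q]_X \leftrightarrow [Q]$, and letting $\Lambda$ be the injective endomorphism of $\mathrm{Div}(\Proj^1_k)$ that multiplies the $[Q]$-coordinate by $e_Q$, one gets $\Pic X = \mathrm{Div}(\Proj^1_k)/\Lambda(\mathrm{Prin}(\Proj^1_k))$, where $\Lambda(D) = \pi^* D$ under the identification. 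The filtration $\mathrm{Div} \supseteq \Lambda(\mathrm{Div}) \supseteq \Lambda(\mathrm{Prin})$ then yields the sequence: the quotient $\mathrm{Div}/\Lambda(\mathrm{Div}) = \bigoplus_{P \in \mathcal P}\Z/e_P\Z$ provides the surjection $q$, while $\Lambda(\mathrm{Div})/\Lambda(\mathrm{Prin}) \cong \Pic \Proj^1_k = \Z$ (using that $\Lambda$ is injective) is the kernel, identified with $\pi^*\Z$.

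Next I would compute the degree. Since $\pi^*$ preserves degrees, $\deg \circ \pi^* \colon \Z \to \Q$ is the inclusion, and by hypothesis $\deg[P] = [k(P):k]/e_P$; writing this in lowest terms as $a_P/d_P$ with $\gcd(a_P, d_P) = 1$ gives $\Z + \tfrac{[k(P):k]}{e_P}\Z = \tfrac1{d_P}\Z$, so $\mathrm{im}(\deg) = \Z + \sum_{P}\tfrac{[k(P):k]}{e_P}\Z = \tfrac{1}{\text{lcm}_P d_P}\Z = \tfrac1{d_X}\Z$, which is the first assertion. In particular $\deg$ is surjective onto $\tfrac1{d_X}\Z$ and $\deg \circ \pi^*$ is injective, whence $\Pic^0 X \cap \pi^*\Z = 0$ and $q$ restricts to an injection on $\Pic^0 X$. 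The assignment $[P] \mapsto \deg[P] \bmod \Z$ is a well-defined map $\bigoplus_{P}\Z/e_P\Z\,[P] \to (\tfrac1{d_X}\Z)/\Z$ (well-defined since $e_P \deg[P] = [k(P):k] \in \Z$) which is surjective by the computation above, and the short exact sequence maps to the tautological sequence $0 \to \Z \to \tfrac1{d_X}\Z \to (\tfrac1{d_X}\Z)/\Z \to 0$ via $(\mathrm{id}_\Z,\ \deg,\ [P] \mapsto \deg[P])$. Applying the snake lemma, with the left vertical map an isomorphism and the middle one surjective, identifies $\Pic^0 X = \ker(\deg)$ with $\ker\big(\bigoplus_P\Z/e_P\Z\,[P] \to (\tfrac1{d_X}\Z)/\Z\big)$ via $q$, which is exactly the claimed exact sequence.

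The step I expect to be the main obstacle is establishing the short exact sequence above, i.e. making precise the comparison of divisors and principal divisors on $X$ with those on $\Proj^1_k$ — in particular the ramification identity $\operatorname{ord}_{[Q]_X}(f) = e_Q \operatorname{ord}_Q(f)$ and the bijection between codimension-one integral substacks of $X$ and closed points of $\Proj^1_k$. This is where the root-stack structure of tame stacky curves and the local model \eqref{Local description of relative stacky curve} are used; once it is available the remainder is a diagram chase together with the elementary arithmetic of the denominators $d_P$.
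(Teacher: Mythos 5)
Your proposal is correct and follows essentially the same route as the paper: both reduce to the divisor-class-group description of $\Pic X$ and the ramification identity $\pi^*Q = e_Q[Q]_X$ coming from the root-stack structure, and then extract the exact sequence by elementary arithmetic with the degree map. The paper writes down the resulting presentation of $\Pic X$ by generators $\mathrm{div}(x)$, $[P]$ and relations $e_P[P]=[k(P):k]\,\mathrm{div}(x)$ directly, whereas you package the same information as a filtration of $\mathrm{Div}(\Proj^1_k)$ plus a snake-lemma diagram chase; the content is identical.
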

\begin{proof}
We use the description of $\Pic X$ as a divisor class group \cite[Lemma 5.4.5]{Voight2022Canonical}.

A stacky curve contains a dense open subscheme so $k(X) = k(X_{\text{coarse}}) = k(\frac{x}{y})$, where $x,y$ are coordinates on $X_{\text{coarse}}$ such that $\{x = 0\}$ lies outside the stacky locus. For any integral $Z \subset X$ its image $Z_{\text{coarse}} \subset X_{\text{coarse}}$ is defined by an equation $f(x,y)$ of degree $[k(Z_{\text{coarse}}): k]$.  Let $G_Z$ be the stabilizer group of $Z$. The stacky curve $X$ is a root stack over $X_{\text{coarse}}$ so we can see from the local description of root stacks \cite[Ex. 2.4.1]{Cadman2007Tangency} that the divisor $\text{div}(f)$ on $X$ corresponding to $f$ is equal to $\# G_Z[Z]$. The fact that $f(x,y)/ x^{[k(Z): Z]} \in k(X)$ implies that $\# G_Z [Z] = [k(Z): Z] \text{div}(x)$. 

These relations generate all relations coming from $k(X)$ because every element is a product of such $f(x,y)$. It follows that $\Pic X$ is equal to the group generated by the divisors $\text{div}(x)$ and $[P]$ for all $[P] \in \mathcal{P}$ with respect to the relations $e_P [P] = [k(P): k] \text{div}(x)$. The lemma follows.
\end{proof}
\begin{remark}
If $k$ is separably closed then $d_P = e_P$ for all $P \in \mathcal{P}$. So for a stacky curve $X$ with signature $(0; e_1, \cdots, e_r)$ we have $\Pic^0 X = 0$ if and only if all the $e_i$ are pairwise coprime. One can check that if $g < 1$ the only such curves are the simply connected ones and the stacky curve of signature $(0; 2,3,5)$.
\label{The stacky curves with Pic^0 trivial}
\end{remark}

\begin{lemma}
Let $X$ be a tame stacky curve with $X_{\emph{coarse}} \cong \Proj^1_k$. Let $U$ be a normal scheme with fraction field $k$ such that the action of $\Gal(k_s/k)$ on $\Pic X_{k_s}$ factors through $\pi_1(U)$, e.g. $U = \Spec k$. We can then identify $\Pic X_{k_s}$ and $\Pic^0 X_{k_s}$ with the corresponding locally constant \'etale sheaves on $U$. We then have an exact sequence
\begin{equation*}
    0 \to \Z/ \frac{d_{X_s}}{d_{X}} \Z \to H^1(U, \Pic^0 X_{k_s}) \to H^1(U, \Pic X_{k_s}) \to 0.
\end{equation*}
\label{H1 of the Kummer map is surjective}
Where the first map sends the generator to the cocycle
\begin{equation*}
    \varphi: \pi_1(U) \to \Pic^0 X_{k_s}: \sigma \to \sigma D - D
\end{equation*}
for any divisor $D$ on $X_{k_s}$ with degree $1/d_{X_s}$.
\label{Computation of H^1(U, Pic)}
\end{lemma}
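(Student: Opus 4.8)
The plan is to run the degree exact sequence over $k_s$ through the long exact cohomology sequence on $U$. Apply Lemma~\ref{Picard group of stacky curve with coarse genus 0} to $X_{k_s}$ over the separably closed field $k_s$: there every stacky point is rational, so the image of $\deg$ is $\tfrac{1}{d_{X_s}}\Z$, and by the definition of $\Pic^0$ one obtains a short exact sequence of $\pi_1(U)$-modules
\begin{equation*}
    0 \to \Pic^0 X_{k_s} \to \Pic X_{k_s} \xrightarrow{\deg} \tfrac{1}{d_{X_s}}\Z \to 0 ,
\end{equation*}
in which $\Pic^0 X_{k_s}$ is finite (it embeds into $\bigoplus_P \Z/e_P\Z$) and $\tfrac{1}{d_{X_s}}\Z \cong \Z$ carries the trivial action, because the degree of a divisor class is Galois-invariant. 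Sheafifying on $U$ and taking \'etale cohomology gives the exact sequence
\begin{equation*}
    H^0(U, \Pic X_{k_s}) \xrightarrow{\deg} \tfrac{1}{d_{X_s}}\Z \xrightarrow{\delta} H^1(U, \Pic^0 X_{k_s}) \to H^1(U, \Pic X_{k_s}) \to H^1(U, \Z) .
\end{equation*}

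Two inputs then finish the computation. First, $H^1(U, \Z) = 0$: since $U$ is connected and normal this group is $\Hom_{\mathrm{cont}}(\pi_1(U), \Z)$, which vanishes because $\pi_1(U)$ is profinite and $\Z$ is torsion-free; hence $H^1(U, \Pic^0 X_{k_s}) \to H^1(U, \Pic X_{k_s})$ is surjective with kernel $\operatorname{im}(\delta) \cong \tfrac{1}{d_{X_s}}\Z \big/ \operatorname{im}(\deg)$. Second, I identify $\operatorname{im}(\deg)$. Because $X_{\text{coarse}} \cong \Proj^1_k$ has a rational point outside the finite stacky locus, $X(k) \neq \emptyset$; as $\G_m(X_{k_s}) = k_s^\times$, the five-term exact sequence of the Hochschild--Serre spectral sequence for $\G_m$ along $X_{k_s} \to X$ is $0 \to \Pic X \to (\Pic X_{k_s})^{\Gal(k_s/k)} \xrightarrow{d_2} \Br k$ (the leading zero by Hilbert~90), and a rational point makes $d_2$ factor through $\Pic(k_s) = 0$, so $\Pic X \xrightarrow{\sim} (\Pic X_{k_s})^{\Gal(k_s/k)}$. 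Since the $\Gal(k_s/k)$-action on $\Pic X_{k_s}$ factors through $\pi_1(U)$, this group equals $H^0(U, \Pic X_{k_s})$, and Lemma~\ref{Picard group of stacky curve with coarse genus 0} over $k$ gives $\operatorname{im}(\deg) = \deg(\Pic X) = \tfrac{1}{d_X}\Z$. As $d_X \mid d_{X_s}$ (because $d_P \mid e_P$ for every $P$), we get $\tfrac{1}{d_{X_s}}\Z \big/ \tfrac{1}{d_X}\Z \cong \Z / \tfrac{d_{X_s}}{d_X}\Z$, which is exactly the claimed short exact sequence.

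To pin down the first map I would unwind $\delta$ on the generator. The generator of $\Z/\tfrac{d_{X_s}}{d_X}\Z$ corresponds to $\tfrac{1}{d_{X_s}} \in \tfrac{1}{d_{X_s}}\Z$; a lift to a $0$-cochain in $\Pic X_{k_s}$ is the class of any divisor $D$ on $X_{k_s}$ of degree $1/d_{X_s}$ (one exists since $\deg$ is surjective over $k_s$), and applying the differential produces the $1$-cocycle $\sigma \mapsto \sigma D - D$, which has values in $\Pic^0 X_{k_s}$ since degree is Galois-invariant. Its cohomology class does not depend on the choice of $D$: two choices differ by an element of $\Pic^0 X_{k_s}$, which changes the cocycle by a coboundary. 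This is the cocycle $\varphi$ in the statement.

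The single step with real content is the equality $H^0(U, \Pic X_{k_s}) = \Pic X$ --- equivalently, that $\deg$ takes all of $\tfrac{1}{d_X}\Z$, and nothing smaller, on the $\Gal(k_s/k)$-invariant divisor classes. The rational point of $X$ furnished by $X_{\text{coarse}} \cong \Proj^1_k$ is precisely what forces the Hochschild--Serre obstruction in $\Br k$ to vanish; everything else is formal once $H^1(U, \Z) = 0$ is granted.
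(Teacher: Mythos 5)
Your proof is correct and follows essentially the same route as the paper: take the long exact \'etale cohomology sequence of the degree sequence $0 \to \Pic^0 X_{k_s} \to \Pic X_{k_s} \to \tfrac{1}{d_{X_s}}\Z \to 0$, use $H^1(U,\Z)=0$, identify the image of $\deg$ on invariants as $\tfrac{1}{d_X}\Z$ via $X(k)\neq\emptyset$, and read off the boundary cocycle. You merely spell out in more detail (via Hochschild--Serre) the step $(\Pic X_{k_s})^{\pi_1(U)} = \Pic X$, which the paper asserts directly.
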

\begin{proof}
Note that $H^1(U, \Z) = 0$ so the long exact sequence in cohomology coming from the degree exact sequence $0 \to \Pic^0 X_{k_s} \to \Pic X_{k_s} \to \frac{1}{d_{X_s}}\Z \to 0$ is
\begin{equation*}
     (\Pic X_{k_s})^{\pi_1(U)} \xrightarrow{\deg} \frac{1}{d_{X_s}}\Z \to H^1(U, \Pic^0 X_{k_s}) \to H^1(U, \Pic X_{k_s}) \to 0.
\end{equation*}
The map $\frac{1}{d_{X_s}}\Z \to H^1(U, \Pic^0 X_{k_s})$ sends the generator to $\varphi$ by the explicit description of this boundary map. The assumption $X_{\text{coarse}} \cong \Proj^1_k$ implies that $X(k) \neq \emptyset$ and thus that $(\Pic X_{k_s})^{\pi_1(U)} = \Pic X$, so the image of the degree map is $\frac{1}{d_X}$.
\end{proof}
We can now construct an inverse of the map $\Br_1 X \to H^1(k, \Pic X_{k_s})$ using this description of the Picard group and work of Skorogabotov. We will utilise the notion of the \emph{type} \cite[Def. 2.3.1]{Skorogobatov2001Torsors} of a torsor.

Let $T \to X$ be a torsor with type $\Pic^0 X_{k_s} \to \Pic X_{k_s}$. It is a $\Hom(\Pic^0 X_{k_s}, \G_m)$-torsor. We have $X_{\text{coarse}} \cong \Proj^1_k$ and thus $X(k) \neq \emptyset$. Such a point defines a splitting of the exact sequence \cite[Cor. 2.3.9]{Skorogobatov2001Torsors} so such a torsor exists.
\begin{proposition}
Let $X$ be a tame stacky curve with $X_{\emph{coarse}} \cong \Proj^1_k$. The group $\Br_1 X$ is generated by $\Br k$ and the Brauer classes obtained by cupping elements of $H^1(k, \Pic^0 X_{k_s})$ with $[T]$.
\label{Brauer group is generated by cyclic algebras}
\end{proposition}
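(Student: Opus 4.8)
The plan is to identify $\Br_1 X/\Br_0 X$ with $H^1(k,\Pic X_{k_s})$ via Hochschild--Serre and then to exhibit the map $\varphi\mapsto\varphi\cup[T]$ as essentially surjective onto it, using the theory of the type of a torsor together with the surjectivity statement of Lemma~\ref{Computation of H^1(U, Pic)}; this simultaneously produces the announced inverse of $\Br_1 X\to H^1(k,\Pic X_{k_s})$.

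First I would unpack the hypothesis $X_{\mathrm{coarse}}\cong\Proj^1_k$. It provides a $k$-point lying on the open where $X\to X_{\mathrm{coarse}}$ is an isomorphism, so $X(k)\neq\emptyset$. Since $X_{k_s}$ is proper and geometrically connected we have $k_s[X_{k_s}]^\times=k_s^\times$ and $\Br X_{k_s}=0$, so the Hochschild--Serre spectral sequence $H^p(k,H^q(X_{k_s},\G_m))\Rightarrow H^{p+q}(X,\G_m)$ gives $\Br_1 X/\Br_0 X\hookrightarrow H^1(k,\Pic X_{k_s})$, and the section $\Spec k\to X$ forces the differentials $d_2$ out of $E_2^{0,1}$ and $E_2^{1,1}$ to vanish, by naturality of the spectral sequence. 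Hence $\Br k\to\Br_1 X$ is injective with image $\Br_0 X$, and the edge map is an isomorphism $h\colon\Br_1 X/\Br_0 X\xrightarrow{\ \sim\ }H^1(k,\Pic X_{k_s})$. I would also record that $\Pic^0 X_{k_s}$ is finite of order prime to $p$ (it is a subgroup of $\bigoplus_P\Z/e_P\Z$ by Lemma~\ref{Picard group of stacky curve with coarse genus 0}, with $p\nmid e_P$ by tameness), so that $S:=\Hom(\Pic^0 X_{k_s},\G_m)$ is a smooth group scheme of multiplicative type and the torsor $T\to X$ of type $\iota\colon\Pic^0 X_{k_s}\hookrightarrow\Pic X_{k_s}$ constructed above is an $S$-torsor.

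The heart of the argument is the compatibility between $-\cup[T]$ and the type $\iota$, which I would take from \cite[\S 2.3]{Skorogobatov2001Torsors} (with a direct check via the Leray spectral sequence as a fallback): writing $M:=\Pic^0 X_{k_s}=\widehat S$, the composite
\begin{equation*}
    H^1(k,M)\xrightarrow{\ \varphi\,\mapsto\,\varphi\cup[T]\ }\Br_1 X\xrightarrow{\ h\ }H^1(k,\Pic X_{k_s})
\end{equation*}
equals $\pm\iota_*$. Indeed, base change to $k_s$ identifies $[T_{k_s}]\in H^1(X_{k_s},S)$ with $\iota\in\Hom(M,\Pic X_{k_s})=H^1(X_{k_s},S)$, the identification using that $k_s^\times$ is divisible, so $\mathrm{Ext}^1(M,k_s^\times)=0$; this is the type. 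Computing $\varphi\cup[T]$ for $\varphi$ pulled back from $\Spec k$ inside the Leray spectral sequence of $f\colon X\to\Spec k$, compatibility of cup products with that spectral sequence shows that its image under $h$ is the cup product of $\varphi\in H^1(k,M)$ with $\iota\in H^0(k,R^1f_*S)=\Hom_{\Gal(k_s/k)}(M,\Pic X_{k_s})$, followed by the map $R^1f_*S\otimes M\to R^1f_*\G_m=\Pic X_{k_s}$ induced by the evaluation $S\otimes M\to\G_m$; this is exactly $\iota_*(\varphi)$. The part of $[T]$ not seen by the type lives in $H^1(k,S)$ and only adds to $\varphi\cup[T]$ a class in $H^2(k,\G_m)$, i.e.\ an element of $\Br_0 X$.

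Finally I would apply Lemma~\ref{Computation of H^1(U, Pic)} with $U=\Spec k$: its exact sequence says precisely that $\iota_*\colon H^1(k,\Pic^0 X_{k_s})\to H^1(k,\Pic X_{k_s})$ is surjective. Since $h$ is an isomorphism and $h\circ(-\cup[T])=\pm\iota_*$ is surjective, the map $\varphi\mapsto\varphi\cup[T]\bmod\Br_0 X$ is surjective onto $\Br_1 X/\Br_0 X$; sending $\psi\in H^1(k,\Pic X_{k_s})$ to $\widetilde\psi\cup[T]\bmod\Br_0 X$, for any lift $\widetilde\psi\in H^1(k,\Pic^0 X_{k_s})$ of $\psi$ along $\iota_*$, is then a well-defined two-sided inverse of $h$. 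This gives the statement: $\Br_1 X$ is generated by $\Br_0 X=\Br k$ and the classes $\varphi\cup[T]$ with $\varphi\in H^1(k,\Pic^0 X_{k_s})$. I expect the only genuinely delicate point to be the middle step --- pinning down the type/cup-product compatibility with the correct normalisation and confirming that the ``type-invisible'' part of $[T]$ affects only the $\Br_0 X$-coset; the first and last steps are formal once Lemma~\ref{Computation of H^1(U, Pic)} and the Hochschild--Serre computation are in hand.
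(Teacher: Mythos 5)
Your argument is correct and follows essentially the same route as the paper: the paper's proof is the one-line citation of \cite[Thm.\ 4.1.1]{Skorogobatov2001Torsors} (which is precisely your middle step identifying $h(\varphi\cup[T])$ with $\pm\iota_*(\varphi)$ and noting that changing $T$ within its type only moves the class by $\Br_0 X$), combined with the surjectivity of $\iota_*$ from Lemma~\ref{Computation of H^1(U, Pic)} and the definition of $T$ as a torsor of type $\Pic^0 X_{k_s}\hookrightarrow\Pic X_{k_s}$. You have simply written out in detail the Hochschild--Serre identification and the type/cup-product compatibility that the paper delegates to Skorobogatov.
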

\begin{proof}
Follows from \cite[Thm. 4.1.1]{Skorogobatov2001Torsors}, Lemma \ref{Computation of H^1(U, Pic)} and the definition of $T$.
\end{proof}
\begin{example}
Let $n, m \neq 1$ and $X$ a tame stacky curve over $k$ with signature $(0; n, m)$. Assume that $X_{\text{coarse}} \cong \Proj^1_k$. Let $P, Q \in X_{\text{coarse}}(k_s)$ be the stacky points which have stabilizers of order $n, m$ respectively and define $r := \gcd(n,m)$. We see from Lemma \ref{Picard group of stacky curve with coarse genus 0} that $\Pic^0 X_{k_s} \cong \Z / r\Z$ as an abstract group. The generator is given by $\frac{n}{r}[P'] - \frac{m}{r}[Q']$. Here $P', Q' \in X$ are the unique integral closed substacks lying over $P, Q$ respectively.

If the action of $\Gal(k_s/k)$ on the stacky points is trivial then we may assume that $P = [1: 0], Q := [0:1]$. In this case $\Gal(k_s/k)$ acts trivially on $\Pic^0 X_{k_s} \cong \Z/ r\Z$ so $T \to X$ is a $\mu_r$-torsor. One can check via an explicit computation that the $\mu_r$-torsor $T := \Proj^1_{s,t} \to X$ which on coarse moduli spaces is given by $[s:t] \to [s^r: t^r]$ has type $\Pic^0 X_{k_s} \to  \Pic X_{k_s}$.

If the action on the stacky points is non-trivial then $n = m$ and the action becomes trivial after a quadratic extension $K/k$. One can then recover a choice of $[T]$ by descending the above construction. To make this explicit, let $\sigma \in \Gal(K/k)$ be the generator and $f(X,Y) = (X - \alpha Y)(X - \sigma(\alpha) Y) \in k[X,Y]$ a homogenous polynomial of degree 2 defining the stacky locus. The relevant torsor over $K$ is given by the map $\Proj^1_{K; s,t} \to X_K$ which on the coarse moduli spaces is given by $[s:t] \to [\alpha s^r - \sigma(\alpha) t^r: s^r - t^r]$. The action on $\Proj^1_K$ which we will use to descend is the map $[s:t] \to [\sigma(t): \sigma(s)]$. The result of this descent is the curve $T := \{ N_{K/k}(U - \alpha V) = W^2\} \subset \Proj^2$. The map $T \to X$ is given by $[u: v: w] \to [ \alpha (u - \sigma(\alpha) v)^r - \sigma(\alpha) w^{r}: (u - \sigma(\alpha) v)^r - w^r]$.

Let us assume that $n = m = r = 2$. We can use $w^2 = (u - \alpha v)(u - \sigma(\alpha)v)$ to simplify the image of $[u:v: w]$ to
\begin{equation*}
    [(u - \sigma(\alpha) v)(\alpha u - \alpha \sigma(\alpha) v - \sigma(\alpha) u + \sigma(\alpha) \alpha v): (u - \sigma(\alpha) v)(u - \sigma(\alpha) v -u + \alpha v)] = [u: v].
\end{equation*} This is the same torsor which shows up in the example of Bhargava-Poonen \cite{Bhargava2020Stacky}.
\label{The universal cover there are at most two stacky points}
\end{example}

One can do similar constructions for other stacky curves $X$ with $X_{\text{coarse}} \cong \Proj^1$.

\section{The elementary obstruction}
Given a Deligne-Mumford stack $X$ we let $\mathcal{D}(X)$ be the derived category of sheaves on the (small) \'etale site of $X$. Any morphism $f: Y \to X$ of Deligne-Mumford stacks defines a right derived functor $R f_*: \mathcal{D}(X) \to \mathcal{D}(Y)$ and a pullback $f^* :\mathcal{D}(Y) \to \mathcal{D}(X)$. The global sections functor $\Gamma$ has a right derived functor $R\Gamma(X, \cdot): \mathcal{D}(X) \to \mathcal{D}(\text{Ab})$ where $\mathcal{D}(\text{Ab})$ is the derived category of abelian groups. We denote by $H^p(X, \cdot) := H^p(R \Gamma(X, \cdot))$ the hypercohomology. We can extend the constructions of \cite[\S 1]{Harari2013DescentOpen} to stacks.
\begin{construction}
Let $\pi:X \to S$ be a morphism of Deligne-Mumford stacks. We denote $KD(X) := (\tau_{\leq 1} R \pi_* \G_m)[1]$ the shift of the truncation of $R \pi_* \G_m$. It is concentrated in degrees $0, -1$ and $H^{-1}(KD(X)) = \pi_* \G_m, H^0(KD(X)) = R^1 \pi_* \G_m$.

The map $\G_m \to R \pi_* \G_m$ defines a morphism $\G_m[1] \to KD(X)$. Its cokernel will be denoted $KD'(X)$. We thus have a distinguished triangle in $\mathcal{D}(S)$
\begin{equation}
    \G_m[1] \to KD(X) \to KD'(X) \xrightarrow{w_X} \G_m[2].
    \label{The fundamental triangle of KD}
\end{equation}
\end{construction}
If $\pi$ has a section $s: S \to X$ then the map $R \pi_* \G_m \to R \pi_*  R s_* \G_m \cong \G_m$ induces a section $KD(X) \to \G_m[1]$ of $\G_m[1] \to KD(X)$. This implies that the triangle \eqref{The fundamental triangle of KD} splits, equivalently that $w_X = 0$. We call $w_X \in \Hom_S( KD'(X), \G_m[2]) = \text{Ext}_S^2(KD'(X), \G_m)$ the \emph{elementary obstruction}. If $S$ is the spectrum of a field and $\pi_*\G_m = \G_m$ then $w_X$ is the elementary obstruction occurring in \cite[Prop 2.2.4]{Colliot1987Descente}.

A more explicit construction of the complex $KD(X)$ when $X$ is a variety smooth over $S$ is given in \cite[App. A]{Harari2013DescentBrauer}.
\begin{lemma}
\hfill
\begin{enumerate}
    \item If $w_X = 0$ then $w_{X_{S'}} = 0$ for all \'etale morphisms $f: S' \to S$.
    \item If $f:X \to Y$ is a morphism of $S$-Deligne-Mumford stacks then $w_X = 0$ implies that $w_Y = 0$.
\end{enumerate}
\label{Functoriality of the elementary obstruction}
\end{lemma}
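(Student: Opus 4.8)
The plan is to deduce both parts from the naturality of the assignment $X \mapsto (KD(X), KD'(X), w_X)$: in each case the elementary obstruction of the target should be obtained from that of the source by applying an exact functor (part (1)) or by pre-composing with a morphism (part (2)), so that it vanishes whenever the source obstruction does.

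For part (1) I would form the Cartesian square in which $\pi' : X_{S'} \to S'$ is the base change of $\pi : X \to S$ along the \'etale map $f : S' \to S$ and $f' : X_{S'} \to X$ is the other projection. The relevant inputs are that $f^*$ is exact (as $f$ is \'etale), commutes with the truncation functor $\tau_{\leq 1}$ and with shifts, and that \'etale base change yields a canonical isomorphism $f^* R\pi_* \G_m \cong R\pi'_* \G_m$; for Deligne-Mumford stacks this last point reduces to the scheme case by choosing an \'etale presentation. Applying $f^*$ to the distinguished triangle \eqref{The fundamental triangle of KD} and using $f'^* \G_m = \G_m$ identifies it, compatibly with the maps out of $\G_m[1]$, with the triangle defining $KD(X_{S'})$ and $KD'(X_{S'})$; hence $w_{X_{S'}} = f^* w_X$, and vanishing follows at once.

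For part (2), writing $\pi_X = \pi_Y \circ f : X \to Y \to S$, the plan is to start from the unit $\G_m \to Rf_* \G_m$ of the adjunction $(f^*, Rf_*)$ on $Y$ (using $f^* \G_m = \G_m$), apply $R\pi_{Y*}$, and invoke $R\pi_{Y*} Rf_* = R\pi_{X*}$ to obtain a morphism $R\pi_{Y*}\G_m \to R\pi_{X*}\G_m$ in $\mathcal{D}(S)$. By the standard compatibility of adjunction units under composition, this morphism sits below the unit maps from the constant sheaf $\G_m$ on $S$. Applying $\tau_{\leq 1}$ and shifting produces a morphism $KD(Y) \to KD(X)$ compatible with the two maps from $\G_m[1]$; completing the resulting commutative square to a morphism of distinguished triangles (axiom TR3, equivalently functoriality of the cone) produces $c : KD'(Y) \to KD'(X)$ with $w_Y = w_X \circ c$, so that $w_X = 0$ forces $w_Y = 0$.

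I do not expect a deep obstacle: the content is entirely derived-category bookkeeping, and the points to be careful about are that the base-change isomorphism in (1) genuinely respects the morphism $\G_m[1] \to KD(X)$ (so it descends to the cokernel $KD'(X)$), and that the morphism $R\pi_{Y*}\G_m \to R\pi_{X*}\G_m$ in (2) is compatible with the units from $\G_m$ on $S$. Both are standard once one has the right formulations of \'etale base change and of the compatibility of units for composite adjunctions, and the passage from schemes to Deligne-Mumford stacks in these statements is routine via an \'etale presentation, since $\G_m$ and the small \'etale site behave well under such presentations.
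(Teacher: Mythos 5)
Your proposal is correct and follows essentially the same route as the paper: part (1) via exactness of \'etale pullback and the identification $KD'(X_{S'}) = f^*KD'(X)$, so $w_{X_{S'}} = f^*w_X$; part (2) via the map $R\pi_{Y*}\G_m \to R\pi_{X*}\G_m$ inducing a morphism of the two distinguished triangles and hence $w_Y = w_X \circ c$. The only difference is that you spell out the adjunction and base-change compatibilities that the paper leaves implicit.
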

\begin{proof}
All pushforwards are taken in the \'etale topology so $KD(\mathcal{X}_{S'}) = f^{*}KD(\mathcal{X})$, $KD'(\mathcal{X}_{S'}) = f^{*}KD'(\mathcal{X})$ and $w_{X_{S'}} = f^* w_{X} = 0$.

For the second statement consider the map $R \pi_* \G_m \to R (\pi \circ f)_* \G_m$, it induces a morphism $KD(X) \to KD(Y)$ which induces a map of triangles
\begin{equation*}
    \begin{tikzcd}
	{\G_m[1]} & {KD(Y)} & {KD'(Y)} & {\G_m[2]} \\
	{\G_m[1]} & {KD(X)} & {KD'(X)} & {\G_m[2]}
	\arrow["{=}"{marking}, draw=none, from=1-1, to=2-1]
	\arrow[from=2-1, to=2-2]
	\arrow[from=2-2, to=2-3]
	\arrow["{w_X}", from=2-3, to=2-4]
	\arrow["{=}"{marking}, draw=none, from=1-4, to=2-4]
	\arrow[from=1-3, to=2-3]
	\arrow[from=1-2, to=2-2]
	\arrow[from=1-1, to=1-2]
	\arrow[from=1-2, to=1-3]
	\arrow["{w_Y}", from=1-3, to=1-4]
\end{tikzcd}
\label{Comparing two elementary obstruction triangles}
\end{equation*}
So if $w_X = 0$ then $w_Y = 0$.
\end{proof}

The complexes $KD(X)$ and $KD'(X)$ are related to the Brauer group as follows
\begin{lemma}
\hfill
\begin{enumerate}
    \item $H^1(S, KD( X)) \cong \Br_1 X := \ker(\Br X \to  H^0(S, R^2 \pi_* \G_m))$.
    \item $\ker(H^1(S, KD'( X )) \xrightarrow{w_{X *}} H^3(S, \G_m)) \cong  \Br_a X := \Br_1 X / \emph{im}(\Br S \to \Br X)$.
\end{enumerate}
\label{Cohomology of KD and the Brauer group}
\end{lemma}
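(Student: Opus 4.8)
The plan is to read off both isomorphisms from the long exact hypercohomology sequences of the two distinguished triangles already at hand, after identifying the maps that appear in them with the maps occurring in the definitions of $\Br_1 X$ and $\Br_0 X$. This is the stack-theoretic counterpart of the arguments of Colliot-Th\'el\`ene--Sansuc and of Harari--Skorobogatov, and no genuinely new ingredient is needed: all the formal tools ($R\pi_*$, truncation functors, the hypercohomology spectral sequence, the Leray spectral sequence, adjunction) are available on the small \'etale site of a Deligne--Mumford stack.

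For the first part, I would begin with the trivial reindexing $H^1(S, KD(X)) = H^2(S, \tau_{\leq 1}R\pi_*\G_m)$ coming from the shift, and then apply $R\Gamma(S,-)$ to the truncation triangle $\tau_{\leq 1}R\pi_*\G_m \to R\pi_*\G_m \to \tau_{\geq 2}R\pi_*\G_m \xrightarrow{+1}$. Since $\tau_{\geq 2}R\pi_*\G_m$ has cohomology sheaves concentrated in degrees $\geq 2$, its hypercohomology spectral sequence has only the zero term in total degree $\leq 1$ and only the term $H^0(S, R^2\pi_*\G_m)$ in total degree $2$; hence $H^1(S, \tau_{\geq 2}R\pi_*\G_m) = 0$ and $H^2(S, \tau_{\geq 2}R\pi_*\G_m) = H^0(S, R^2\pi_*\G_m)$. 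The long exact sequence then gives an exact sequence $0 \to H^2(S, \tau_{\leq 1}R\pi_*\G_m) \to H^2(X,\G_m) \to H^0(S, R^2\pi_*\G_m)$ in which, after unwinding the identifications, the second arrow is precisely the edge map $\Br X \to H^0(S, R^2\pi_*\G_m)$ of the Leray spectral sequence of $\pi$, i.e. the map defining $\Br_1 X$. Hence $H^1(S, KD(X)) \cong \Br_1 X$.

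For the second part, I would apply $R\Gamma(S,-)$ to the fundamental triangle $\G_m[1] \to KD(X) \to KD'(X) \xrightarrow{w_X} \G_m[2]$. Using $H^n(S, \G_m[j]) = H^{n+j}(S,\G_m)$, the first part, and the compatibility of the identification $H^1(S, KD(X)) = \Br_1 X$ with the inclusion $\Br_1 X \hookrightarrow \Br X = H^1(S, R\pi_*\G_m[1])$, the relevant stretch of the long exact sequence reads $\Br S \xrightarrow{u} \Br_1 X \xrightarrow{v} H^1(S, KD'(X)) \xrightarrow{w_{X*}} H^3(S,\G_m)$, where the composite of $u$ with $\Br_1 X \hookrightarrow \Br X$ is the pullback $\pi^*$ (because $\G_m[1] \to KD(X)$ is, by construction, induced by the adjunction unit $\G_m \to R\pi_*\G_m$). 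In particular $\pi^*\Br S \subseteq \Br_1 X$ and $\operatorname{im}(u) = \pi^*\Br S = \Br_0 X$, so exactness of the sequence yields that $v$ induces an isomorphism $\Br_a X = \Br_1 X/\Br_0 X \xrightarrow{\ \sim\ } \ker\bigl(H^1(S, KD'(X)) \xrightarrow{w_{X*}} H^3(S,\G_m)\bigr)$, which is the claim.

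The only place that requires care — the main obstacle, such as it is — is the bookkeeping that identifies the maps produced by the two triangles (the arrow $\Br X \to H^0(S, R^2\pi_*\G_m)$ coming from the truncation triangle, and the arrow $\Br S \to \Br_1 X$ coming from the fundamental triangle) with the maps appearing in the definitions of $\Br_1 X$ and $\Br_0 X$. These are standard compatibilities between truncation functors, the Leray spectral sequence, and adjunction, which go through verbatim for the small \'etale site of a Deligne--Mumford stack; compare the explicit treatment for varieties in \cite[App. A]{Harari2013DescentBrauer} and the constructions of \cite[\S 1]{Harari2013DescentOpen}.
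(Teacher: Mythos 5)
Your argument is correct and is exactly the ``standard computation with distinguished triangles'' that the paper's proof delegates to \cite[Lemma 2.1]{Harari2008Local-global} and \cite[p. 7]{Harari2013DescentOpen}: part (1) from the truncation triangle for $R\pi_*\G_m$ together with the Leray edge-map identification, and part (2) from the long exact sequence of the fundamental triangle \eqref{The fundamental triangle of KD}, noting that $\mathrm{im}(\Br S \to \Br X) = \pi^*\Br S \subseteq \Br_1 X$. You have simply written out the details the paper leaves to the references; there is no substantive difference in approach.
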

\begin{proof}
This follows from a standard computation with distinguished triangles, see e.g. \cite[Lemma 2.1]{Harari2008Local-global} and \cite[p. 7]{Harari2013DescentOpen}.
\end{proof}

If $T$ is a group of multiplicative type over $S$ with sheaf of characters $\hat{T}$ then we have the following exact sequence as in \cite[Prop. 1.1]{Harari2013DescentOpen}.
\begin{equation}
    H^1(S, T) \to H^1(X, T) \xrightarrow{\chi} \Hom_S(\hat{T}, KD'(X)) \xrightarrow{\partial} H^2(S, T) \to H^2(X, T).
    \label{Long exact sequence for tori coming from the triangle}
\end{equation}

We can compute the complex $KD'(\mathcal{X})$ for stacky curves.
\begin{lemma}
Let $S$ be a regular integral scheme. Let $\mathcal{X} \to S$ be a relative stacky curve whose generic fiber $X$ has a coarse moduli space of genus $0$. The complex $KD'(\mathcal{X})$ is concentrated in degree $0$ and is \'etale locally a constant finitely generated group, i.e. $\pi_* \G_m = \G_m$ and $R^1 \pi_* \G_m$ is \'etale locally a constant finitely generated group. Moreover, the order of the torsion part of $R^1 \pi_* \G_m$ is invertible on $S$.
\label{R pi G_m of a stacky curve}
\end{lemma}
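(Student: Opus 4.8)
The plan is to unwind the statement about the complex $KD'(\mathcal{X})$ into two assertions about the étale sheaves $\pi_*\G_m$ and $R^1\pi_*\G_m = \Pic_{\mathcal{X}/S}$, and then to compute the latter explicitly after an étale base change on $S$. Since $KD(\mathcal{X}) = (\tau_{\leq 1} R\pi_*\G_m)[1]$ has cohomology only in degrees $-1$ and $0$, namely $\pi_*\G_m$ and $R^1\pi_*\G_m$, and since the map $\G_m[1] \to KD(\mathcal{X})$ in \eqref{The fundamental triangle of KD} is on $H^{-1}$ the adjunction morphism $\G_m \to \pi_*\G_m$, the long exact cohomology sequence of that triangle shows that $KD'(\mathcal{X})$ is concentrated in degree $0$ with $H^0(KD'(\mathcal{X})) = R^1\pi_*\G_m$ exactly when $\G_m \xrightarrow{\sim} \pi_*\G_m$. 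To see $\pi_*\G_m = \G_m$ I would factor $\pi$ through the coarse space, $\mathcal{X} \xrightarrow{c} \mathcal{X}_{\text{coarse}} \xrightarrow{q} S$: for a tame stack $c_*\mathcal{O}_{\mathcal{X}} = \mathcal{O}_{\mathcal{X}_{\text{coarse}}}$ universally, hence $c_*\G_m = \G_m$, and by Lemma \ref{Relative stacky curves are smooth} together with the properness of $\mathcal{X}\to S$ the morphism $q$ is smooth and proper with geometrically connected and geometrically reduced fibres, so $q_*\mathcal{O} = \mathcal{O}_S$ universally and $q_*\G_m = \G_m$; composing, $\pi_*\G_m = q_* c_* \G_m = \G_m$ and $KD'(\mathcal{X}) = R^1\pi_*\G_m$ placed in degree $0$.

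It remains to analyse $R^1\pi_*\G_m = \Pic_{\mathcal{X}/S}$. Both "étale locally constant and finitely generated" and "torsion of order invertible on $S$" may be checked after replacing $S$ by an étale cover, since $R^1\pi_*\G_m$ commutes with étale base change and units and torsion orders descend along faithfully flat maps; so I am free to localise. The genus of the fibres of $q$ is locally constant, hence $\equiv 0$ as $S$ is connected, so $\mathcal{X}_{\text{coarse}}$ is a form of $\mathbb{P}^1_S$ and becomes $\mathbb{P}^1_S$ after an étale cover; if $\mathcal{X}$ has no stacky points we are then done, as $R^1\pi_*\G_m = \Pic_{\mathbb{P}^1_S/S} = \underline{\mathbb{Z}}$. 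Otherwise Lemma \ref{Relative stacky curves are smooth} presents $\mathcal{X}$ as an iterated root stack $\mathbb{P}^1_S[\sqrt[n_1]{D_1}]\cdots[\sqrt[n_k]{D_k}]$ with the $D_i$ pairwise disjoint and finite étale over $S$, and after a further étale cover each $D_i$ splits as a disjoint union of sections $s_{i1},\dots,s_{i\delta_i}$ of $\mathbb{P}^1_S \to S$.

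Now I would read off $\Pic_{\mathcal{X}/S}$ from the root stack structure. Writing $c : \mathcal{X} \to \mathbb{P}^1_S$ for the coarse map, $h$ for the relative hyperplane class and $L_{ij}$ for the tautological $n_i$-th root of $\mathcal{O}(s_{ij})$ on $\mathcal{X}$, one gets for every $T \to S$, using that $[s_{ij}] = h$ in $\Pic(\mathbb{P}^1_T)/\Pic(T)$, that $c_*\mathcal{O} = \mathcal{O}$, and that a line bundle on the root stack with all weights zero descends to $\mathbb{P}^1_T$, a presentation
\begin{equation*}
    \Pic(\mathcal{X}_T)/\Pic(T) \;\cong\; \Bigl(\mathbb{Z}h \oplus \bigoplus_{i,j}\mathbb{Z}L_{ij}\Bigr)\Big/\bigl\langle\, n_iL_{ij} - h \;:\; i,j \,\bigr\rangle \;=:\; M .
\end{equation*}
Thus $T \mapsto \Pic(\mathcal{X}_T)/\Pic(T)$ is the constant presheaf $M$, its sheafification $R^1\pi_*\G_m$ is the constant sheaf $\underline{M}$, and $M$ is finitely generated of rank $1$; moreover the weight maps give a short exact sequence $0 \to \mathbb{Z}h \to M \to \bigoplus_{i,j}\mathbb{Z}/n_i \to 0$, so $M_{\text{tors}}$ injects into $\bigoplus_{i,j}\mathbb{Z}/n_i$ and has order dividing $\prod_i n_i^{\delta_i}$, which is invertible on $S$ because $\mathcal{X} \to S$ is tame (each $D_i \to S$ being finite étale surjective, the residue characteristics of $S$ are all prime to $n_i$). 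This is exactly the assertion.

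The hard part will be the computation of $\Pic$ of the iterated root stack in families — justifying the presentation of $M$ uniformly over all $T$, in particular the surjectivity of the weight maps and the descent of weight-zero line bundles — which rests on the precise definition of a root stack and on the identification of a root stack at a disjoint union of divisors with the iterated root stack at the pieces; one must also be a little careful because the $D_i$ are only sections after the étale localisation. An alternative that may be cleaner to reference is to run the Leray spectral sequence for $\mathcal{X} \xrightarrow{c} \mathbb{P}^1_S \xrightarrow{q} S$ with inputs $R^1 c_*\G_m \cong \bigoplus_i (D_i \hookrightarrow \mathbb{P}^1_S)_* \underline{\mathbb{Z}/n_i}$ and $R^{\geq 2} c_*\G_m = 0$ (immediate from the local model \eqref{Local description of relative stacky curve} and tameness, $\mu_{n_i}$ being étale) together with the standard values $R^1 q_*\G_m = \underline{\mathbb{Z}}$ and $R^2 q_*\G_m = 0$ for a $\mathbb{P}^1$-bundle over the regular scheme $S$; I would use whichever of the two is shortest to make rigorous.
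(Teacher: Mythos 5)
Your proposal is correct in outline but takes a genuinely different route from the paper. For $\pi_*\G_m=\G_m$ the paper simply invokes properness and geometric connectedness of the fibres, which your factorization through the coarse space refines but does not change. For $R^1\pi_*\G_m$ the two arguments diverge: you trivialize everything by an \'etale cover of $S$ (splitting the coarse $\mathbb{P}^1$-bundle and the branch divisors into sections) and then compute the presheaf $T\mapsto \Pic(\mathcal{X}_T)/\Pic(T)$ directly from the iterated root-stack presentation, showing it is the constant presheaf $M=(\Z h\oplus\bigoplus_{i,j}\Z L_{ij})/\langle n_iL_{ij}-h\rangle$; the paper instead applies the Stacks-project criterion reducing local constancy to bijectivity of the specialization maps over a regular strictly henselian local base, where it combines the comparison $\Pic\mathcal{X}=\Pic X$ over a regular base with trivial Picard group (citing Fringuelli) with its Lemma \ref{Picard group of stacky curve with coarse genus 0} over the fraction field and the splitting of the stacky locus over a strictly henselian ring. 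The trade-off is exactly the step you flag as ``the hard part'': the paper never needs the Picard group of a root stack in families, only over a field, at the price of two external citations; your version is more self-contained and explicit but owes a uniform-in-$T$ justification of the presentation of $M$ (surjectivity of the weight maps and descent of weight-zero line bundles to $\mathbb{P}^1_T$), which is true for tame root stacks and provable from the local model \eqref{Local description of relative stacky curve} but is real work rather than a formality. Your torsion bound $\#M_{\mathrm{tors}}\mid\prod_i n_i^{\delta_i}$ with invertibility from tameness and surjectivity of $D_i\to S$ matches the paper's conclusion. Note also that, as in the paper's own proof, tameness must be added to the hypotheses for the final assertion; both arguments use it in the same way.
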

\begin{proof}
We have $\pi_* \G_m = \G_m$ because the map is proper and all fibers are stacky curves and thus geometrically connected.

To show that $R^1 \pi_* \G_m$ is locally constant we apply \cite[\href{https://stacks.math.columbia.edu/tag/0GKC}{Tag 0GKC}]{stacks-project}. It remains to show that if $S$ is the spectrum of a regular strictly henselian ring $R$ with fraction field $K$ and residue field $\kappa$ then the specialization map $\Pic \mathcal{X} = (R^1 \pi_* \G_m)(S) \to (R^1 \pi_* \G_m)(K_s) = \Pic X_{K_s}$ is an isomorphism.

We note that by Lemma \ref{Picard group of stacky curve with coarse genus 0} $\Pic X_{K_s}$ is finitely generated and the order of its torsion subgroup divides the product of the degrees of the divisors of $X$ as a root stack. Lemma \ref{Relative stacky curves are smooth} implies that if $\mathcal{X}$ is tame then all of these degrees are invertible on $U$ and thus also on $\mathcal{X}$.

We apply \cite[Prop. 2.3.2]{Fringuelli2022Picard} to see that $\Pic \mathcal{X} = \Pic X$ since $\mathcal{X}$ is regular and $\Pic S = 0$. It remains to show that $\Pic X = \Pic X_{K_s}$, which by Lemma \ref{Picard group of stacky curve with coarse genus 0} is equivalent to $\mathcal{P}(K) = \mathcal{P}(K_s)$ for the stacky locus $\mathcal{P} \subset X_{\text{coarse}, K}$.

The closure in $\mathcal{X}_{\text{coarse}}$ of $\mathcal{P}$ is finite \'etale over $S = \Spec R$ by Lemma \ref{Relative stacky curves are smooth}. The fact that $R$ is strictly henselian implies that the closure is a disjoint union of copies of $S$ and thus that $\mathcal{P}(K) = \mathcal{P}(K_s)$.
\end{proof}
\subsection{The elementary obstruction for stacky curves}
We will now analyze the elementary obstruction for stacky curves. For the rest of this subsection we will look at the following situation.

Let $U$ be a Dedekind scheme whose field of fractions $k$ is a global field. Let $S$ be the set of places of $k$ which do not define points of $U$, it must contain all the archimedean places. Let $\mathcal{X} \to U$ be a relative tame stacky curve whose generic fiber $X := \mathcal{X}_k$ has genus $g < 1$. Let $\mathcal{P} \subset X_{\text{coarse}}$ be the stacky locus. In this subsection we will prove the first and third part of Theorem \ref{The elementary obstruction is the only obstruction}.

By Lemma \ref{R pi G_m of a stacky curve} $KD'(\mathcal{X}) = R^1 \pi_* \G_m$ is locally constant. The stalk at the generic point is $(R^1 \pi_* \G_m)_{k_s} = \Pic X_{k_s}$ so $R^1 \pi_* \G_m$ can be identified with the $\pi_1(U)$-module $\Pic X_{k_s}$. Let $\hat{T}^0 \subset R^1 \pi_* \G_m$ be the subsheaf corresponding to $\Pic X_{k_s}^0$ and $T^0 := \Hom(\hat{T}^0, \G_m)$, $T := \Hom(R^1 \pi_* \G_m, \G_m)$ the dual tori. Let $T_{\deg} := \ker(T \to T^0)$, it has group of characters $\Pic X_{k_s}/\Pic^0 X_{k_s} \cong \Z$ so $T_{\deg} \cong \G_m$.

The most difficult case is handled in the following lemma.
\begin{lemma}
If $X$ has signature $(0; 2, 2, 2)$ and $\Gal(k_s/k)$ acts transitively on $\mathcal{P}(k_s)$ then $\mathcal{X}(U) \neq \emptyset$.
\label{The case of signature (0; 2, 2, 2)}
\end{lemma}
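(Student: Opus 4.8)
My plan is to realise $X$ through conics over $k$ and to feed in Springer's theorem, the oddness of $3$ being the essential input. First the reductions: transitivity means $\mathcal{P}=\Spec L$ for a \emph{cubic field} $L/k$, so the genus-$0$ curve $X_{\text{coarse}}$ carries a closed point of odd degree $3$ and hence, by Springer's theorem, $X_{\text{coarse}}\cong\Proj^1_k$; since $\mathcal{X}_{\text{coarse}}\to U$ is smooth, proper, with genus-$0$ fibres (Lemma \ref{Relative stacky curves are smooth}) and rational generic fibre, it is a conic bundle with vanishing class, i.e.\ essentially $\Proj^1_U$. As $\mathcal{X}$ is tame with stabilisers of order $2$, every residue field of $U$ has $\geq 3$ elements, so each $\mathcal{O}_v$ ($v\notin S$) carries a point of $\Proj^1$ avoiding the degree-$3$ subscheme $\mathcal{P}$, which lifts through the root-stack map $\mathcal{X}\to\mathcal{X}_{\text{coarse}}$; hence $\mathcal{X}(\A_U)\neq\emptyset$. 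Finally, by Theorem \ref{Stacky points are integral points} the normalisation $V$ of $U$ in $\mathcal{P}$ is finite étale of degree $3$ over $U$ and $\mathcal{X}(V)\neq\emptyset$.

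Next I would identify $\Pic^0 X_{k_s}$, via Lemma \ref{Picard group of stacky curve with coarse genus 0}, with the augmentation submodule of $(\Z/2)[\mathcal{P}]$, so that the finite $U$-group scheme $T^0:=\Hom(\Pic^0 X_{k_s},\G_m)$ — which is killed by $2$ — sits in $1\to\mu_2\to R_{V/U}\mu_2\to T^0\to 1$. I then want the torsor $\mathcal{T}\to\mathcal{X}$ of type $\iota\colon\Pic^0 X_{k_s}\hookrightarrow KD'(\mathcal{X})$ to exist over $U$; by the exact sequence \eqref{Long exact sequence for tori coming from the triangle} this amounts to $\partial(\iota)=0$ in $H^2(U,T^0)$. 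Here $\partial$ commutes with restriction, and $\mathcal{X}(V)\neq\emptyset$ provides a section over $V$, splitting $H^2(V,T^0)\hookrightarrow H^2(\mathcal{X}_V,T^0)$, so $\partial_V(\iota_V)=0$; since $H^2(U,T^0)$ is $2$-torsion and $[V:U]=3$ is odd, $\mathrm{cor}_{V/U}\circ\mathrm{res}_{V/U}=\mathrm{id}$ there, so $\mathrm{res}_{V/U}$ is injective and $\partial(\iota)=0$. By Riemann–Hurwitz each geometric fibre of $\mathcal{T}\to\mathcal{X}$ is a genus-$0$ curve with no stacky structure, so $\mathcal{T}\to U$, and every twist $\mathcal{T}_a$ ($a\in H^1(U,T^0)$), is a smooth proper conic bundle over $U$; moreover $\mathcal{T}_a(U)\neq\emptyset$ iff the class $[\mathcal{T}_a]\in\Br U[2]\subseteq\Br k[2]$ vanishes.

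Then the descent: by Lemma \ref{Points on the base and on the torsor} over $V$ and $\mathcal{X}(V)\neq\emptyset$ there is $b\in H^1(V,T^0)$ with $(\mathcal{T}_V)_b$ split, and I would put $a:=\mathrm{cor}_{V/U}(b)\in H^1(U,T^0)$. For $v\notin S$ the class $[\mathcal{T}_a]$ already dies in $\Br\mathcal{O}_v=0$, so it is enough to show $(\mathcal{T}_a)_{k_v}$ is a split conic for each $v\in S$: then Hasse–Minkowski forces $[\mathcal{T}_a]=0$ in $\Br k$, hence $\mathcal{T}_a\cong\Proj(\mathcal{O}_U\oplus\mathcal{L})$ admits a $U$-point, and $\mathcal{X}(U)\supseteq\pi_a(\mathcal{T}_a(U))\neq\emptyset$ by Lemma \ref{Points on the base and on the torsor}. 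Locally $\mathrm{res}_{k_v}(a)=\sum_{w\mid v}\mathrm{cor}_{L_w/k_v}(b_w)$ by the Mackey formula, and $\sum_{w\mid v}[L_w:k_v]=3$ forces some completion $L_w$ to have odd degree over $k_v$; transporting the splitness of $(\mathcal{T}_V)_b$ over $L_w$ down to $k_v$ reproduces Springer's theorem and gives the claim.

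The hard part will be exactly this last corestriction computation. The assignment ``twist $\mapsto$ class of the conic'' is not additive: it is the connecting map of the non-split central extension $1\to\mu_2\to\widetilde{T^0}\to T^0\to 1$ with $\widetilde{T^0}$ a form of the quaternion group $Q_8$, so the corestriction identities carry trace-form/commutator correction terms that must be controlled. This is precisely the point where $[L:k]$ being \emph{odd} is indispensable — the analogue is false for even degree, which is why signature $(0;n,n)$ genuinely behaves differently (and is postponed to §7), and why the genus-$\tfrac12$ curves of Bhargava–Poonen can violate the integral Hasse principle whereas the genus-$\tfrac34$ curves handled here cannot.
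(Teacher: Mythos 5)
Your outline follows the paper's proof in every essential ingredient: the cubic field cut out by the transitive Galois action, the integral point of $\mathcal{X}_V$ over the normalization $V$ supplied by Theorem \ref{Stacky points are integral points}, the odd-degree restriction--corestriction trick on the $2$-torsion group $T^0$ to produce a torsor of injective type over $U$ itself, the identification of that torsor as an \'etale degree-$4$ cover of genus $0$ with no stacky structure, and Springer's theorem to descend a point from the cubic extension. In fact your twist is literally the paper's object: with $a=\mathrm{cor}_{V/U}(b)$ one has $[\mathcal{T}_a]=[\mathcal{T}]-\mathrm{cor}(b)=3[\mathcal{T}]-\mathrm{cor}(b)=\mathrm{cores}^{\mathcal{X}_V}_{\mathcal{X}}\bigl([\mathcal{T}_V]-b\bigr)$ in the $2$-torsion group $H^1(\mathcal{X},T^0)$, which is exactly the corestricted torsor $\mathcal{Z}$ the paper works with.

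The proposal is nevertheless incomplete, and the missing piece is not a routine verification: everything up to the last step is soft, and the entire content of the lemma sits in the computation you explicitly defer, namely that the conic attached to $\mathcal{T}_a$ is split. Your place-by-place route makes this harder than necessary: after the Mackey decomposition $\mathrm{res}_{k_v}(a)=\sum_{w\mid v}\mathrm{cor}_{L_w/k_v}(b_w)$ you must evaluate the quadratic connecting map of $1\to\mu_2\to\widetilde{T^0}\to T^0\to 1$ on a \emph{sum} of corestrictions, so cup-product correction terms between the summands appear before any single corestriction is analyzed, and the odd local degree only helps with one summand at a time. The paper avoids the sum entirely: it works globally over $K$ and its Galois closure $L$, expands $\mathrm{res}^{X}_{X_K}\mathrm{cores}^{X_K}_{X}([Y])$ by the double coset formula into Galois translates $[\sigma(Y)]=[Y]+\mathfrak{c}_{\sigma(Y)}$, each of which carries the rational point $\sigma(\mathcal{Q})$ and hence has vanishing conic invariant $\alpha(\mathfrak{c}_{\sigma(Y)})=0$; in the non-Galois case the only corestriction that survives is $\mathrm{cores}^{L}_{K}$ applied to a single class $\mathfrak{c}_{\tau(Y_L)}$ whose conic invariant is already zero. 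So the paper reduces the compatibility of $\alpha$ with corestriction to one benign instance, whereas your version needs it in full strength together with control of the commutator corrections --- precisely the statement you label ``the hard part'' and do not prove. As written, the argument therefore has a genuine gap at its decisive step.
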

\begin{proof}
Note that $T^0$ is $2$-torsion by Lemma \ref{Picard group of stacky curve with coarse genus 0}.

Let $\mathcal{P} \subset X_{\text{coarse}}$ be the stacky locus. The asscumption that $\Gal(k_s/k)$ acts transitively on $\mathcal{P}(k_s)$ implies that $\mathcal{P} = \Spec K$ where $K/k$ is a field extension of degree $3$. Let $L/k$ be the Galois closure of $K/k$.

Let $V, W$ respectively be the normalizations of $K, L$ over $S$. We deduce from Theorem \ref{Stacky points are integral points} that $V$ and thus its Galois closure $W$ is finite \'etale over $U$ and that there exists a point $\mathcal{Q} \in \mathcal{X}_V(V) \neq \emptyset$. This point induces a splitting of the exact sequence \eqref{Long exact sequence for tori coming from the triangle} for the torus $T^0$. There thus exists a $T^0$-torus $\mathcal{Y} \to \mathcal{X}_V$ with class $[\mathcal{Y}] \in H^1(\mathcal{X}_V, T^0)$ such that $\chi([\mathcal{Y}])$ is the inclusion $\hat{T}^0 \subset R^1 \pi_* \G_m$ and $\mathcal{Q}^* \mathcal{Y}$ is the trivial torsor over $V$. We thus have $\mathcal{Y}(V) \neq \emptyset$.

There exists a generalization of the corestriction in group cohomology to \'etale cohomology. It is often called the trace map \cite[\href{https://stacks.math.columbia.edu/tag/03SH}{Tag 03SH}]{stacks-project}. We will call it corestriction. For any finite \'etale map of Deligne-Mumford stacks $B \to A$ we will use the notations $\text{res}^{A}_{B}: H^{*}(A, \cdot) \to H^*(B, \cdot)$ and $\text{cores}^{B}_{A}: H^*(B, \cdot) \to H^*(A, \cdot)$ for, respectively, the restriction and corestriction map. 

Consider the corestriction map $\text{cores}^{\mathcal{X}_V}_{\mathcal{X}}: H^1(\mathcal{X}_V, T^0) \to H^1(\mathcal{X}, T^0)$ and the torsor $\mathcal{Z} \to \mathcal{X}$ with class $[\mathcal{Z}] = \text{cores}^{\mathcal{X}_V}_{\mathcal{X}}([\mathcal{Y}])$. The exact sequence \eqref{Long exact sequence for tori coming from the triangle} is functorial and thus commutes with restrictions and corestrictions. We compute by applying \cite[Cor. 1.5.7]{Neukirch2008Cohomology} and using that $\hat{T}^0$ is $2$-torsion that
\begin{equation*}
\begin{split}
    \chi([\mathcal{Z}]) &= \text{cores}^{V}_{U}(\hat{T}^0 \subset R^1 \pi_* \G_m)
    = \text{cores}^{V}_{U}(\text{res}^U_V(\hat{T}^0 \subset R^1 \pi_* \G_m)) \\ &= 3(\hat{T}^0 \subset R^1 \pi_* \G_m) = (\hat{T}^0 \subset R^1 \pi_* \G_m) \in \Hom_U(\hat{T}^0, R^1 \pi_* \G_m).
    \end{split}
\end{equation*}

Let $Z$ be the generic fiber of $\mathcal{Z} \to U$. The $T^0$-torsor $Z \to X$ has injective type so $Z$ is geometrically connected by \cite[(2.6), Lemma 2.3.1:(iii)] {Skorogobatov2001Torsors}. This implies that $\mathcal{Z} \to U$ is a stacky curve, indeed the only non-trivial part is that the fibers are geometrically connected and this follows from \cite[Tag \href{https://stacks.math.columbia.edu/tag/0E0N}{0E0N}]{stacks-project}. We see from Lemma \ref{Picard group of stacky curve with coarse genus 0} that $\# \Pic^0(X_{k_s}) = 4$ so the Riemann-Hurwitz formula \cite[Prop. 5.5.6]{Voight2022Canonical} shows that $Z$ has genus $0$. We claim that $Z(K) \neq \emptyset$. The lemma follows from the claim. Indeed, $Z$ is a genus $0$ curve so $[K:k] = 3$ and $Z(K) \neq \emptyset$ imply that $Z(k) \neq \emptyset$. It follows from Lemma \ref{Relative stacky curves are smooth} that $\mathcal{Z}$ is a scheme and thus by the valuative criterion for properness that $\mathcal{Z}(U) = Z(k) \neq \emptyset$. This implies that $\mathcal{X}(U) \neq \emptyset$.

Let $Y$ be the generic fiber of $\mathcal{Y} \to V$. We have $\mathcal{Y}(V) \neq \emptyset$ so $Y(K) \neq \emptyset$ and $Y \cong \Proj^1_K$. Let $M/K$ be a field extension. It follows from the exact sequence \eqref{Long exact sequence for tori coming from the triangle} that if $A \to X_M$ is any $T^0$-torsor such that $\chi([A]) = \hat{T}^0 \subset R^1 \pi_* \G_m $ that there exists a unique $\mathfrak{c}_A \in H^1(M, T^0)$ such that $[A] = [Y_M] + \mathfrak{c}_A$. The underlying variety of $A$ is a twist of $Y_M$ and thus a Severi-Brauer curve. The corresponding class in $ H^1(M, \text{PGL}_2) \cong \Br M [2]$ is given by the image of $\mathfrak{c}_A$ under the map 
\begin{equation*}
    \alpha:H^1(M, T^0) \to H^1(M, \text{Aut}_M(Y)) = H^1(M, PGL_2) \cong \Br M [2].
\end{equation*} 
If $A(M) \neq \emptyset$ then $A \cong \Proj^1_M$ and thus $\alpha(\mathfrak{c}_A) = 0.$

Note that for all $\sigma \in \Gal(L/k)$ we have $\sigma(\mathcal{Q}_L) \in \sigma(Y_L)(L)$ so $\alpha(\mathfrak{c}_{\sigma(Y_L)}) = 0$. We will prove that $\alpha(\mathfrak{c}_{Z_K}) = 0$, from which the claim $Z(K) \neq \emptyset$ follows.

There are two cases to consider, the first case is $L = K$. Then $\Gal(K/k) = \Z/3 \Z$. We deduce from \cite[Cor. 1.5.7]{Neukirch2008Cohomology} that
\begin{equation*}
  [Z_K] = \text{res}^{X}_{X_K}(\text{cores}^{X_K}_{X}( [Y])) = \sum_{\sigma \in \Gal(K/k)} [\sigma(Y)] = 3[Y] + \sum_{\sigma \in \Gal(K/k)} \mathfrak{c}_{\sigma(Y)}.
\end{equation*}
We have $3[Y] = [Y]$ because $T^0$ is $2$-torsion so $\alpha(\mathfrak{c}_{Z_K})= \sum_{\sigma \in \Gal(K/k)} \alpha(\mathfrak{c}_{\sigma(Y)}) = 0.$

The second case is $L \neq K$. Then $\Gal(L/k) = S_3$ and if $\tau \in \Gal(L/k)$ is any element not in the subgroup $\Gal(L/K)$ then \cite[Prop. 1.5.6, Cor. 1.5.7]{Neukirch2008Cohomology} shows that
\begin{equation*}
\begin{split}
  [Z_K] &= \text{res}^{X}_{X_K}(\text{cores}^{X_K}_{X}( [Y])) = [Y] + \text{cores}^{X_L}_{X_K} \tau([Y_L]) \\ &= [Y] + \text{cores}^{X_L}_{X_K}([Y_L]) + \text{cores}^{L}_{K}(\mathfrak{c}_{\tau(Y_L)}) = 3[Y] +  \text{cores}^{L}_{K}(\mathfrak{c}_{\tau(Y_L)})\in H^1(X_K, T^0).
\end{split}
\end{equation*}
We have $3[Y] = [Y]$ because $T^0$ is $2$-torsion. The map $\alpha$ commutes with corestriction so $\alpha(\mathfrak{c}_{Z_K}) =  \alpha(\text{cores}^{L}_{K}(\mathfrak{c}_{\tau(Y_L)})) = \text{cores}^{L}_K \alpha(\mathfrak{c}_{\tau(Y_L)}) = 0.$
\end{proof}

We can now prove Theorem \ref{The elementary obstruction is the only obstruction}.(\ref{Only stacky curves with signature (0; n, n) can have an obstruction}).
\begin{theorem}
Let $\mathcal{X} \to U$ be a relative stacky curve whose generic fiber $X$ has genus $ g < 1$ and which does not have signature $(0)$ or $(0; n,n)$ for any $n > 1$. Then $\mathcal{X}(U) \neq \emptyset$.
\label{Only stacky curves with signature not equal to (0;n, n) can have an elementary obstruction}.
\end{theorem}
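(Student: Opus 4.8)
The plan is a case analysis on the signature of $X$. In every case except signature $(0;2,2,2)$ with a transitive Galois action I will exhibit a $k$-rational stacky point, which forces $\mathcal{X}(U) \neq \emptyset$ by Theorem \ref{Stacky points are integral points}; the remaining case is exactly Lemma \ref{The case of signature (0; 2, 2, 2)}.

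First I would recall from \S 2 that $g < 1$ forces $g_{\text{coarse}} = 0$ and $r \leq 3$, and enumerate the signatures. Discarding $(0)$ and $(0; n,n)$ for $n > 1$, the possibilities are: $(0; n)$ with $n > 1$; $(0; n, m)$ with $1 < n \neq m$; and, among the three-point signatures $(0; e_1, e_2, e_3)$ satisfying $\frac{1}{e_1} + \frac{1}{e_2} + \frac{1}{e_3} > 1$, the family $(0; 2,2,n)$ with $n > 1$ together with $(0; 2,3,3)$, $(0; 2,3,4)$ and $(0; 2,3,5)$.

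The observation driving the argument is that $\mathcal{P}$ is a finite \'etale $k$-scheme on which $\Gal(k_s/k)$ acts preserving, for each geometric point, the order of the stabilizer of the point of $X$ lying above it. Hence any stacky point whose stabilizer order is not shared with another stacky point is fixed by $\Gal(k_s/k)$, so it defines a $k$-point of $\mathcal{P}$; then $\mathcal{P}(k) \neq \emptyset$ and Theorem \ref{Stacky points are integral points} yields $\mathcal{X}(U) \neq \emptyset$. This handles every case except $(0; 2,2,2)$: for $(0; n)$ use the unique stacky point; for $(0;n,m)$ with $n \neq m$, for $(0; 2,2,n)$ with $n > 2$, and for $(0; 2,3,3)$ use the point whose stabilizer order appears only once; and for $(0; 2,3,4)$ and $(0; 2,3,5)$ every stacky point is already $k$-rational.

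Finally, for signature $(0; 2,2,2)$ the group $\Gal(k_s/k)$ permutes the three geometric stacky points, so its orbits partition a set of size $3$. If some orbit has size $1$, that point is $k$-rational and we conclude as before via Theorem \ref{Stacky points are integral points}; otherwise the action is transitive and Lemma \ref{The case of signature (0; 2, 2, 2)} applies directly. I expect the real difficulty to be entirely isolated in that lemma---the corestriction-and-descent argument producing a genus-$0$ quotient curve with a point over the cubic field $K/k$---while the theorem itself is then just the organization of signatures together with the observation that unequal ramification orders force the existence of a rational stacky point.
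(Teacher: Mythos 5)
Your proposal is correct and follows essentially the same route as the paper: both arguments use the fact that the Galois action on $\mathcal{P}(k_s)$ preserves stabilizer orders to produce a $k$-rational stacky point (hence an integral point via Theorem \ref{Stacky points are integral points}) in every case except signature $(0;2,2,2)$ with transitive action, which is then delegated to Lemma \ref{The case of signature (0; 2, 2, 2)}. Your version merely spells out the case enumeration that the paper compresses into a reference to the list of signatures.
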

\begin{proof}
If the stacky locus $\mathcal{P} \subset X_{\text{coarse}}$ contains a $k$-point then $\mathcal{X}(U) \neq \emptyset$ by Theorem \ref{Stacky points are integral points}. But the action of $\Gal(k_s/k)$ on $\mathcal{P}(k_s)$ preserves stabilizers so if $X$ has a unique point in $\mathcal{P}(k_s)$ with a certain stabilizer then $\mathcal{X}(U) \neq \emptyset$. It follows from the list \cite[Prop. 5.5]{Behrend2006Uniformization} that the only remaining case is when $X$ has signature $(0; 2,2,2)$ and the action of $\Gal(k_s/k)$ on $\mathcal{P}(k_s)$ is transitive. This case is Lemma \ref{The case of signature (0; 2, 2, 2)}.
\end{proof}
This allows us to prove \ref{The elementary obstruction is the only obstruction}.(\ref{The elementary obstruction is the only obstruction: part 1}).
\begin{theorem}
Let $\mathcal{X} \to U$ be a relative stacky curve whose generic fiber $X$ has genus $g < 1$. The elementary obstruction is then the only obstruction to the existence of an integral point on $\mathcal{X}$, i.e. if $w_{\mathcal{X}} = 0$ then $\mathcal{X}(U) \neq \emptyset$.
\label{The elemantary obstruction is the only obstruction if genus < 1}
\end{theorem}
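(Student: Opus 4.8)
The plan is as follows. By Theorem~\ref{Only stacky curves with signature not equal to (0;n, n) can have an elementary obstruction} it suffices to treat the signatures $(0)$ and $(0;n,n)$ with $n>1$, and in both cases $\mathcal{X}_{\text{coarse}}\to U$ is a smooth proper genus‑$0$ curve by Lemma~\ref{Relative stacky curves are smooth}. For signature $(0)$, where $\mathcal{X}=\mathcal{X}_{\text{coarse}}$, I would use that $KD'(\mathcal{X})=R^1\pi_*\G_m$ is the constant sheaf $\underline{\Z}$ (Lemmas~\ref{R pi G_m of a stacky curve} and~\ref{Picard group of stacky curve with coarse genus 0}): in the exact sequence~\eqref{Long exact sequence for tori coming from the triangle} for $\tilde{T}=\G_m$ the connecting map $H^0(U,\underline{\Z})\to H^2(U,\G_m)$ sends the generator to $w_{\mathcal{X}}$, so $w_{\mathcal{X}}=0$ produces a line bundle on $\mathcal{X}$ of relative degree $1$; pushing it forward identifies $\mathcal{X}$ with $\mathbb{P}(\mathcal{E})$ for a rank‑$2$ bundle $\mathcal{E}$ on the Dedekind scheme $U$, and since $\mathcal{E}$ has a sub‑line‑bundle with locally free quotient, $\mathbb{P}(\mathcal{E})\to U$ has a section and $\mathcal{X}(U)\neq\emptyset$.

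Now take signature $(0;n,n)$. The coarse morphism $\mathcal{X}\to\mathcal{X}_{\text{coarse}}$ together with Lemma~\ref{Functoriality of the elementary obstruction}(2) gives $w_{\mathcal{X}_{\text{coarse}}}=0$, so by the previous step $\mathcal{X}_{\text{coarse}}(U)\neq\emptyset$, whence $X_{\text{coarse}}\cong\Proj^1_k$ and the dense open $X^\circ\subset X$ has $X^\circ(k)\neq\emptyset$. If the two stacky points are $k$‑rational I would finish with Theorem~\ref{Stacky points are integral points}; otherwise $\mathcal{P}=\Spec K$ with $K/k$ quadratic, and the normalization $V$ of $U$ in $K$ is finite étale over $U$ with a point $\mathcal{Q}\in\mathcal{X}_V(V)$ by Theorem~\ref{Stacky points are integral points}. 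By Lemma~\ref{Picard group of stacky curve with coarse genus 0} and Example~\ref{The universal cover there are at most two stacky points}, $\Pic^0 X_{k_s}\cong\Z/n\Z$ with $\Gal(k_s/k)$ acting through $\Gal(K/k)$ by $-1$, so $T^0:=\Hom(\Pic^0 X_{k_s},\G_m)$ is the $n$‑torsion subgroup scheme of the norm‑one torus $R^{(1)}_{K/k}\G_m$. Since $w_{\mathcal{X}}=0$, the connecting map in~\eqref{Long exact sequence for tori coming from the triangle} for $\tilde{T}=T^0$ vanishes, and I would take a $T^0$‑torsor $\pi\colon\mathcal{Z}\to\mathcal{X}$ over $U$ whose type is the inclusion $\Pic^0 X_{k_s}\hookrightarrow R^1\pi_*\G_m$. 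Exactly as in the proof of Lemma~\ref{The case of signature (0; 2, 2, 2)} the generic fiber $Z\to X$ is a connected étale cover of degree $n$, so $\mathcal{Z}\to U$ is a relative stacky curve, Riemann–Hurwitz gives $g(Z)=0$, hence $\mathcal{Z}$ is a scheme (Lemma~\ref{Relative stacky curves are smooth}), proper over $U$, and $\mathcal{Z}(U)=Z(k)$. It therefore suffices to produce a twist $\mathcal{Z}_b$, $b\in H^1(U,T^0)$, with vanishing Severi–Brauer class $[Z_b]\in\Br k$.

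Because $X(k)\neq\emptyset$, Lemma~\ref{Points on the base and on the torsor} shows some twist $Z_a$, $a\in H^1(k,T^0)$, has a rational point, so $[Z]$ differs from $0$ by an element in the image of the map $\bar\alpha\colon H^1(k,T^0)\to\Br k[2]$ coming from the inclusion of $T^0$ into a maximal torus of $\text{Aut}(Z_{\overline{k}})$; this maximal torus is the $k$‑form of $\G_m$ isomorphic to $R^{(1)}_{K/k}\G_m$, so $\bar\alpha$ factors through $H^1(k,R^{(1)}_{K/k}\G_m)=k^\times/\Norm_{K/k}K^\times$, a group of exponent $2$. If $n$ is odd, $H^1(k,T^0)$ has exponent $n$, so the image of $\bar\alpha$ is killed by $\gcd(n,2)=1$, hence $[Z]=0$, $Z(k)\neq\emptyset$, and $\mathcal{X}(U)\supseteq\pi(\mathcal{Z}(U))\neq\emptyset$. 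If $n$ is even I would first check $[Z]\in\Br(V/U):=\ker(\Br U\to\Br V)$: over $K$ the action of $T^0_K\cong\mu_{n,K}$ on $Z_K$ runs through the diagonal $\G_{m,K}$, so $\bar\alpha_K=0$ and $\mathcal{Q}\in\mathcal{X}_V(V)$ forces $[Z_K]=0$; and repeating the analysis at each place $v$ of $U$—using $\mathcal{X}(\mathcal{O}_v)\neq\emptyset$ (which holds because $\mathcal{X}_{\text{coarse},\mathcal{O}_v}\cong\Proj^1_{\mathcal{O}_v}$, together with the stacky point when $v$ splits in $K$ and with $\mathcal{D}$ having no $\kappa(v)$‑point when $v$ is inert) and $H^1(\mathcal{O}_v,R^{(1)}_{K/k}\G_m)=0$—shows $[Z]$ is trivial in $\Br k_v$ for all $v\in U$. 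Finally I would verify that $\bar\alpha$ maps $H^1(U,T^0)$ onto $\Br(V/U)$: for $n$ even the Kummer sequence for $T^0=R^{(1)}_{K/k}\G_m[n]$ reduces this to surjectivity of $H^1(U,R^{(1)}_{K/k}\G_m)\to\Br(V/U)$, which I would extract from the Hochschild–Serre spectral sequence for the étale double cover $V\to U$ and the fact that $\Br V=\Br\mathcal{O}_{K,S'}$ carries only the expected classes; choosing $b$ with $\bar\alpha(b)=-[Z]$ then gives $[Z_b]=0$ and $\mathcal{X}(U)\supseteq\pi_b(\mathcal{Z}_b(U))\neq\emptyset$.

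I expect the genuine difficulty to sit entirely in the $n$ even subcase: the surjectivity of $\bar\alpha$ on integral cohomology—equivalently, that $w_{\mathcal{X}}=0$ prevents $\Beh(\mathcal{X})$ from obstructing an integral point—and the bookkeeping that $[Z]$ is integral. The signature $(0)$ case, the identification of $T^0$ as a norm‑one torus, and the exponent argument in the $n$ odd case are comparatively routine once the setup is in place.
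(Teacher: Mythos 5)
Your reduction to the signatures $(0)$ and $(0;n,n)$ and your treatment of the signature $(0)$ case are fine and agree with the paper (the paper quotes \cite[Ex. 2.2.11]{Colliot1987Descente} to get $X \cong \Proj^1_k$ and then uses that $\mathcal{X}$ is a scheme plus the valuative criterion; your $\mathbb{P}(\mathcal{E})$ argument over $U$ is an acceptable variant). The $n$ odd subcase of $(0;n,n)$ also looks workable. The problem is the $n$ even subcase, which you correctly identify as the heart of the matter but which your strategy cannot close as set up. The only place where the hypothesis $w_{\mathcal{X}}=0$ enters your argument is in producing the $T^0$-torsor $\mathcal{Z}\to\mathcal{X}$ over $U$ of type $\Pic^0 X_{k_s}\hookrightarrow R^1\pi_*\G_m$; everything after that (the integrality of $[Z]$, the membership $[Z]\in\Br(V/U)$, the surjectivity of $\bar\alpha\colon H^1(U,T^0)\to\Br(V/U)$) is claimed unconditionally. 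But that torsor also exists in the Bhargava--Poonen example: there $n=2$, $T^0=\mu_2$, and $\{Z^2=f(X,Y)\}\to\Proj^1_{\Z[1/2q]}[\sqrt{f}]$ is a $\mu_2$-torsor of the required type defined over all of $U$, yet $\mathcal{X}(U)=\emptyset$. So if your argument went through it would prove a false statement. Concretely, the unverified surjectivity of $\bar\alpha$ on $H^1(U,T^0)$ is \emph{not} true in general --- its failure is precisely what $\Beh(\mathcal{X})$ (equivalently, by the duality in Theorem \ref{The locally constant algebras give the only obstruction}, the elementary obstruction) detects --- and you have not explained how the full vanishing $w_{\mathcal{X}}=0$, as opposed to its restriction to $\hat{T}^0$, would be brought to bear on it.

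The paper avoids this entirely by using the full strength of the splitting of the triangle \eqref{The fundamental triangle of KD}: $w_{\mathcal{X}}=0$ yields a torsor $\mathcal{V}\to\mathcal{X}$ under the \emph{whole} torus $T=\Hom(R^1\pi_*\G_m,\G_m)$ whose type is the identity (a universal torsor over $U$), and this is strictly more than the existence of $\mathcal{Z}$. Setting $\mathcal{Y}=[\mathcal{V}/T_{\deg}]$ recovers your genus-$0$ cover, but now it comes equipped with the residual $\G_m$-torsor $\mathcal{V}\to\mathcal{Y}$, whose generic fiber corresponds to a generator of $\Pic Y_{k_s}$ (a degree computation) that is by construction defined over $k$; hence $\Pic Y=\Pic Y_{k_s}$ and the genus-$0$ curve $Y$ is $\Proj^1_k$, with no Brauer-group surjectivity statement needed. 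If you want to salvage your route, you would have to show that $w_{\mathcal{X}}=0$ forces $[Z]$ to lie in the image of $\bar\alpha$ on $H^1(U,T^0)$, which is essentially equivalent to redoing this universal-torsor argument.
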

\begin{proof}
Due to Theorem \ref{Only stacky curves with signature not equal to (0;n, n) can have an elementary obstruction} we only have to deal with the case when $X$ has signature $(0)$ or $(0; n,n)$ for some $n > 1$.

If $X$ has signature $(0)$ then it is a curve of genus $0$. Lemma \ref{Functoriality of the elementary obstruction} implies that $w_{X} = 0$ and it follows from \cite[Ex. 2.2.11]{Colliot1987Descente} that $X \cong \Proj^1$. Lemma \ref{Relative stacky curves are smooth} shows that $\mathcal{X}$ is a scheme so we deduce from the valuative criterion for properness that $\mathcal{X}(U) = X(k) \neq \emptyset$.

Assume now that $X$ has signature $(0; n, n)$. The triangle \eqref{The fundamental triangle of KD} splits since $w_{\mathcal{X}} = 0$. The long exact sequence \eqref{Long exact sequence for tori coming from the triangle} coming from this triangle splits so there exist a $T$-torsor $\mathcal{V} \to \mathcal{X}$ such that $\chi([\mathcal{V}])$ is the identity morphism $R^1 \pi_* \G_m \to  R^1 \pi_* \G_m$. Let $\mathcal{Y} = [\mathcal{V}/ T_{\deg}]$, it is a $T^0$-torsor over $\mathcal{X}$. We claim that $\mathcal{Y}$ is a relative stacky curve, the only non-trivial part is to show that the fibers of $\mathcal{Y} \to U$ are geometrically connected. The generic fiber $Y := \mathcal{Y}_k \to X := \mathcal{X}_k$ of this torsor is the $T^0$-torsor with injective type $\Pic X_{k_s}^0 \to \Pic X_{k_s}$ and is thus geometrically connected by \cite[(2.6), Lemma 2.3.1:(iii)] {Skorogobatov2001Torsors}. The claim follows from \cite[\href{https://stacks.math.columbia.edu/tag/0E0N}{Tag 0E0N}]{stacks-project}.

The genus of $Y$ is $0$ by the Riemann-Hurwitz formula \cite[Prop. 5.5.6]{Voight2022Canonical} and the fact that $\# \Pic^0 X_{k_s} = n$ by Lemma \ref{Picard group of stacky curve with coarse genus 0}. It remains to show that $Y \cong \Proj^1_k$. 

The generic fiber $V \to Y$ of $\mathcal{V} \to \mathcal{Y}$ is the $\G_m$-torsor corresponding to the image of the map $\Pic X_{k_s} \to \Pic Y_{k_s}$. The map $Y \to X$ has degree $\# \Pic^0 X_{k_s} = n$. The following diagram thus commutes
\begin{equation*}
    \begin{tikzcd}
	{\Pic X_{k_s} } & {\Pic Y_{k_s} } \\
	{\frac{1}{n}\Z} & \Z
	\arrow["\deg", from=1-1, to=2-1]
	\arrow["{\cdot n}", from=2-1, to=2-2]
	\arrow[from=1-1, to=1-2]
	\arrow["\cong"{description}, draw=none, from=1-2, to=2-2]
\end{tikzcd}
\end{equation*}
The degree map $\Pic X_{k_s} \to \frac{1}{n} \Z$ is surjective so $\Pic X_{k_s} \to \Pic Y_{k_s}$ is surjective. This implies that the $\G_m$-torsor $V_{k_s} \to Y_{k_s}$ corresponds to a generator of $\Pic Y_{k_s}$. But the torsor $V \to Y$ also defines an element of $\Pic Y$ so $\Pic Y = \Pic Y_{k_s}$. This implies that $Y \cong \Proj^1_k$ since $Y$ is a genus $0$ curve.
\end{proof}

\subsection{The elementary obstruction and \texorpdfstring{$\Beh(\mathcal{X})$}{Beh(X)}}
The goal of this subsection is to extend \cite[Prop. 3.3]{Harari2008Local-global}, which relates the Brauer-Manin obstruction and the elementary obstruction, to integral models of Deligne-Mumford stacks. This will allow us to prove Theorem \ref{The elementary obstruction is the only obstruction}.(\ref{The Brauer-Manin obstruction coming from beh is the only one}). We use the same notation as the previous subsection except that we let $\mathcal{X}$ be any regular Deligne-Mumford stack and $\pi: \mathcal{X} \to U$ any morphism (not necessarily smooth or proper).

We recall the definition of the group $\Beh(\mathcal{X}) := \ker( \Br_a \mathcal{X} \to \prod_{v \in S} \Br_a \mathcal{X}_{k_v})$ of locally constant algebras.

The evaluation map at a place $v$
\begin{equation*}
    \text{ev}_v(\alpha; \cdot): X(k_v) \text{ if  } v \in S, \mathcal{X}(\mathcal{O}_v) \text{ otherwise} \to \Q/\Z: x_v \to \text{inv}_v(\alpha(x_v)).
\end{equation*}
is constant for $\alpha \in \Beh(\mathcal{X})$. If $v \in S$ this is because the image of $\alpha$ in $\Br X_{k_v}$ lies in $\Br_0 X_{k_v}$. If $v \not \in S$ then its image is an element of $\Br \mathcal{X}_{\mathcal{O}_v}$ so $\alpha(x_v) \in \Br \mathcal{O}_v = 0$.

It follows that if $\mathcal{X}(\A_U) \neq \emptyset$ then for any choice $(x_v)_{v \in S} \in \prod_{v \in S} \mathcal{X}(k_v)$ the Brauer-Manin pairing $\mathcal{X}(\A_U) \times \Beh(\mathcal{X}) \to \Q/ \Z$ factors through the map
\begin{equation}
    \beh: \Beh(\mathcal{X}) \to \Q/\Z: \alpha \to \sum_{v \in S} \text{inv}_v(\alpha(x_v)).
    \label{Definition of the map beh}
\end{equation}

We can also describe $\beh$ as follows, analogous to \cite[Lemma 3.1]{Harari2008Local-global}.
\begin{lemma}
If $\mathcal{X}(k_v) \neq \emptyset$ for all $v \in S$ then the following commutative diagram has exact rows
\begin{equation}
    \begin{tikzcd}
	0 & {\Br U} & {\Br_1 \mathcal{X}} & {\Br_a \mathcal{X}} & 0 \\
	0 & {\bigoplus_{v \in S} \Br k_v} & {\bigoplus_{v \in S} \Br_1 \mathcal{X}_{k_v}} & {\bigoplus_{v \in S} \Br_a \mathcal{X}_{k_v}} & 0
	\arrow[from=1-2, to=1-3]
	\arrow[from=1-2, to=2-2]
	\arrow[from=2-2, to=2-3]
	\arrow[from=1-3, to=2-3]
	\arrow[from=1-3, to=1-4]
	\arrow[from=2-3, to=2-4]
	\arrow[from=2-1, to=2-2]
	\arrow[from=1-1, to=1-2]
	\arrow[from=1-4, to=1-5]
	\arrow[from=2-4, to=2-5]
	\arrow[from=1-4, to=2-4]
\end{tikzcd}\
\label{Diagram of Brauer groups to which we can apply the snake lemma}
\end{equation}
The map $\beh$ is equal to the boundary map $\Beh(\mathcal{X}) \to \emph{coker}(\Br U \to \bigoplus_{v \in S} \Br k_v)$ coming from the snake lemma composed with the map $\bigoplus_{v \in S} \Br k_v \to \Q/ \Z: (\alpha_v)_v \to  \sum_{v \in S} \emph{inv}_v(\alpha_v)$.
\label{The map beh coming from the snake lemma}
\end{lemma}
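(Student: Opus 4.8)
The plan is to follow the strategy of \cite[Lemma 3.1]{Harari2008Local-global}: first exhibit the diagram \eqref{Diagram of Brauer groups to which we can apply the snake lemma} as a morphism of short exact sequences, then apply the snake lemma, and finally compute the resulting boundary map by hand. The vertical maps are pullback along $\mathcal{X}_{k_v}=\mathcal{X}\times_U\Spec k_v\to\mathcal{X}$ and along $\Spec k_v\to U$ (for $v\in S$ the latter factors through the generic point $\Spec k$, so $\mathcal{X}_{k_v}$ is simply the base change of the generic fibre $X$), together with the induced map on the quotients $\Br_a$; the squares commute by functoriality. Pullback sends $\Br_1\mathcal{X}$ into $\Br_1\mathcal{X}_{k_v}$ because any class in $\Br_1\mathcal{X}$ restricts to $0$ on the geometric generic fibre $X_{k_s}$, which is also the geometric fibre of $\mathcal{X}_{k_v}\to\Spec k_v$. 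Right-exactness of both rows is built into the definition $\Br_a(-)=\Br_1(-)/\Br_0(-)$, once one notes $\Br_0\subseteq\Br_1$ since a constant class restricts to $0$ on every geometric fibre.

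It then remains to check exactness on the left, namely injectivity of $\Br U\to\Br_1\mathcal{X}$ and of each $\Br k_v\to\Br_1\mathcal{X}_{k_v}$. The latter I would split by evaluation at a point of $\mathcal{X}(k_v)=X(k_v)$, which is non-empty by hypothesis; a direct sum of the resulting short exact sequences is short exact, which gives the bottom row. For $\Br U$, class field theory identifies it with $\ker\bigl(\bigoplus_{v\in S}\Br k_v\xrightarrow{\sum_v\text{inv}_v}\Q/\Z\bigr)$ (and with $0$ when $S=\emptyset$ and $k$ is a function field), so in particular $\Br U\hookrightarrow\bigoplus_{v\in S}\Br k_v$; composing with $\bigoplus_{v\in S}\Br k_v\hookrightarrow\bigoplus_{v\in S}\Br_1\mathcal{X}_{k_v}$ and using commutativity of the left square forces $\Br U\to\Br_1\mathcal{X}$ to be injective.

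With \eqref{Diagram of Brauer groups to which we can apply the snake lemma} now a morphism of short exact sequences, the snake lemma produces a boundary map $\delta\colon\Beh(\mathcal{X})=\ker(\Br_a\mathcal{X}\to\bigoplus_{v\in S}\Br_a\mathcal{X}_{k_v})\to\operatorname{coker}(\Br U\to\bigoplus_{v\in S}\Br k_v)$, and since $\Br U\to\bigoplus_{v\in S}\Br k_v\xrightarrow{\sum_v\text{inv}_v}\Q/\Z$ is the zero map (the class field theory sequence above) the invariant sum descends to $\operatorname{coker}(\Br U\to\bigoplus_{v\in S}\Br k_v)\to\Q/\Z$. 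To check $(\sum_v\text{inv}_v)\circ\delta=\beh$ I would trace the snake construction for a given $\alpha\in\Beh(\mathcal{X})$: lift $\alpha$ to $\tilde\alpha\in\Br_1\mathcal{X}$; its image in $\bigoplus_{v\in S}\Br_1\mathcal{X}_{k_v}$ maps to $0$ in $\bigoplus_{v\in S}\Br_a\mathcal{X}_{k_v}$, hence (by the injectivity just proved) has a unique preimage in $\bigoplus_{v\in S}\Br k_v$, which by applying the left inverse given by evaluation at $x_v$ componentwise is seen to be $(\tilde\alpha(x_v))_{v\in S}$. Therefore $\delta(\alpha)=[(\tilde\alpha(x_v))_{v\in S}]$ and $(\sum_v\text{inv}_v)(\delta(\alpha))=\sum_{v\in S}\text{inv}_v(\tilde\alpha(x_v))=\sum_{v\in S}\text{inv}_v(\alpha(x_v))=\beh(\alpha)$, the middle equality being exactly the definition \eqref{Definition of the map beh} of the evaluation of an element of $\Br_a\mathcal{X}$ at $x_v$.

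Essentially everything here is bookkeeping; the one step that genuinely uses arithmetic input, rather than being a formal diagram chase, is the exactness of the top row at $\Br U$, which rests on the class field theory description of $\Br U$ together with the local solubility hypothesis $\mathcal{X}(k_v)\neq\emptyset$. One could alternatively extract the two rows from the long exact cohomology sequences attached to the triangle \eqref{The fundamental triangle of KD} over $U$ and over each $k_v$, in the spirit of \cite[Prop. 1.1]{Harari2013DescentOpen} and Lemma \ref{Cohomology of KD and the Brauer group}, but the local points are still needed for the injectivity of $\Br U\to\Br_1\mathcal{X}$, so the elementary argument above seems cleaner.
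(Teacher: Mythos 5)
Your proposal is correct and follows essentially the same route as the paper: right exactness from the definition of $\Br_a$, splitting of the bottom row by evaluation at the local points $x_v$, injectivity of $\Br U\to\bigoplus_{v\in S}\Br k_v$ from class field theory to get left exactness of the top row, and then an explicit trace through the snake lemma to identify the boundary map with $\alpha\mapsto(\alpha(x_v))_{v\in S}$. You spell out a few points the paper leaves implicit (the descent of $\sum_v\mathrm{inv}_v$ to the cokernel, and why pullback preserves $\Br_1$), but the argument is the same.
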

\begin{proof}
Right exactness of the rows is true by the definition of $\Br_a \mathcal{X}$. For $v \in S$ we can take $x_v \in \mathcal{X}(k_v)$ which defines a section $\Br_1 \mathcal{X}_{k_v} \to \Br k_v$ by functoriality of the Brauer group. The fact that $\Br U$ injects into $\bigoplus_{v \in S} \Br k_v$ implies that the top row is also left exact. 

Using the construction of the snake lemma we see that the image of $\alpha \in \Beh (\mathcal{X})$ under the boundary map is $(\alpha(x_v)_v)_{v \in S} \in \text{coker}(\Br U \to \bigoplus_{v \in S} \Br k_v)$ for any choice of $(x_v)_{v \in S} \in \prod_{v \in S} \mathcal{X}(k_v)$. The lemma follows immediately.
\end{proof}

We require the notion of arithmetic \'etale cohomology with compact support $H^p_c(U, \cdot)$. If $k$ is a function field over a finite field then this is the usual cohomology with compact support. If $k$ is a number field then one has to make certain modifications to deal with the archimedan places. We refer to \cite[p. 165]{Milne2006Duality} for the details. We denote by $R\Gamma_c(U, \cdot) \in \mathcal{D}(\text{Ab})$ the complex of abelian groups computing $H^p_c(U, \cdot)$, i.e. what is referred to as $H_c(U, \cdot)$ in 
\cite[p. 166]{Milne2006Duality}.

The defining property \cite[Prop. II.2.3]{Milne2006Duality} of this cohomology with compact support is the existence of the following distinguished triangle in $\mathcal{D}(\text{Ab})$
\begin{equation*}
    R\Gamma_c(U, \mathcal{F}) \to R\Gamma(U, \mathcal{F}) \to \bigoplus_{v \in S} R \Gamma(k_v, j_v^* \mathcal{F}) \to R\Gamma_c(U, \mathcal{F})[1]
\end{equation*}
for any complex $\mathcal{F} \in \mathcal{D}(U)$. Where $j_v: \Spec k_v \to U$ is map coming from the inclusion $k \subset k_v$ and $R\Gamma(k_v, \cdot)$ is the complex computing Galois or Tate cohomology if $v$ is non-archimedean or archimedean, respectively. 

The long exact sequence corresponding to this triangle is
\begin{equation}
    \cdots \to H^p_c(U, \mathcal{F}) \to H^p(U, \mathcal{F}) \to \bigoplus_{v \in S} H^p(k_v, j_v^{*}\mathcal{F}) \to H^{p + 1}_c(U, \mathcal{F}) \to \cdots 
    \label{Long exact sequence of compactly supported cohomology}
\end{equation}
where $H^p(k_v, \cdot)$ is the Galois or Tate hypercohomology if $v$ is non-archimedean or archimedean, respectively.

If $S$ contains at least one non-archimedean place then $H^3(U, \G_m) = 0$ by \cite[Rem. II.2.2]{Milne2006Duality}. So Lemma \ref{Cohomology of KD and the Brauer group} and the above long exact sequence imply that
\begin{equation}
    \Beh(X) \cong \text{im}( H^1_c(U, KD'(\mathcal{X})) \to H^1(U,KD'(\mathcal{X})).
    \label{Surjectivity of H^1_c to Beh}
\end{equation}
\begin{proposition}
If $S$ contains at least one non-archimedean place then the following diagram commutes.
\begin{equation*}
    \begin{tikzcd}
	{H^1_c(U, KD'(\mathcal{X})) } & {H^1_c(U, \G_m[2]) } & {H^3_c(U, \G_m) } \\
	{\Beh(\mathcal{X})} && {\Q/\Z} \\
	&& {}
	\arrow["{w_{\mathcal{X}*}}", from=1-1, to=1-2]
	\arrow["{=}"{description}, draw=none, from=1-2, to=1-3]
	\arrow["\cong"', from=1-3, to=2-3]
	\arrow[from=1-1, to=2-1]
	\arrow["\beh", from=2-1, to=2-3]
\end{tikzcd}
\end{equation*}

Here $w_{\mathcal{X}*}$ is induced by $w_{\mathcal{X}}$ in the triangle \eqref{The fundamental triangle of KD} and $H^3_c(U, \G_m) \cong \Q/\Z$ is the isomorphism of \cite[Prop II.2.6]{Milne2006Duality}.
\label{Relation between beh and the boundary map of the triangle}
\end{proposition}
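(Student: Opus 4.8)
The plan is to identify $w_{\mathcal{X}*}$ on $H^1_c$ with the snake-lemma boundary describing $\beh$ in Lemma \ref{The map beh coming from the snake lemma}, by working inside the commutative diagram obtained by applying the three functors $R\Gamma_c(U,-)$, $R\Gamma(U,-)$ and $\bigoplus_{v\in S}R\Gamma(k_v,j_v^*-)$ to the fundamental triangle \eqref{The fundamental triangle of KD}.

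Concretely, I would first realise $\G_m[1]\to KD(\mathcal{X})\to KD'(\mathcal{X})\xrightarrow{w_{\mathcal{X}}}\G_m[2]$ by an honest short exact sequence of complexes $0\to G^\bullet\to K^\bullet\to K'^\bullet\to0$ computing $R\Gamma(U,-)$ of the three terms, together with its restriction $0\to\widetilde G^\bullet\to\widetilde K^\bullet\to\widetilde K'^\bullet\to0$ computing $\bigoplus_{v\in S}R\Gamma(k_v,j_v^*-)$, and the associated mapping cones $G_c^\bullet,K_c^\bullet,K_c'^\bullet$ computing $R\Gamma_c(U,-)$. Taking $H^1$ of these three short exact sequences of complexes and using Lemma \ref{Cohomology of KD and the Brauer group} together with the vanishing of $H^3(U,\G_m)$ and of $H^3(k_v,\G_m)$ (the latter in the Tate sense at the archimedean places) recovers precisely the diagram \eqref{Diagram of Brauer groups to which we can apply the snake lemma}, the connecting map of each row being the corresponding instance of $w_{\mathcal{X}*}$; and the compact-support triangle for $\G_m[1]$, again with $H^3(U,\G_m)=0$, identifies $H^3_c(U,\G_m)=H^1_c(U,\G_m[2])$ with $\operatorname{coker}\bigl(\Br U\to\bigoplus_{v\in S}\Br k_v\bigr)$ compatibly with the isomorphism $H^3_c(U,\G_m)\cong\Q/\Z$ of \cite[Prop.~II.2.6]{Milne2006Duality} and the sum-of-invariants map of global class field theory.

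The heart is then a chase in this picture. Given $\xi\in H^1_c(U,KD'(\mathcal{X}))$, its image $\bar\xi\in H^1(U,KD'(\mathcal{X}))=\Br_a\mathcal{X}$ automatically lies in $\Beh(\mathcal{X})$ by \eqref{Surjectivity of H^1_c to Beh}. Since $w_{\mathcal{X}}$ vanishes on $H^1(U,KD'(\mathcal{X}))$, a cocycle representing $\bar\xi$ lifts to an honest cocycle in $K^\bullet$, i.e.\ $\bar\xi$ lifts to some $\eta\in\Br_1\mathcal{X}$; threading this lift through the compact-support connecting homomorphism one finds $w_{\mathcal{X}*}(\xi)=[(0,\widetilde g)]$ where $\widetilde g\in\widetilde G^1$ is a cocycle whose image in $\bigoplus_{v\in S}\Br_1\mathcal{X}_{k_v}$ is $\operatorname{res}(\eta)$. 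Writing $\beta\in\bigoplus_{v\in S}\Br k_v$ for the class of $\widetilde g$, this $\beta$ is exactly the element produced by the snake-lemma recipe for $\beh$ in Lemma \ref{The map beh coming from the snake lemma} — here one uses that $\bigoplus_{v\in S}\Br k_v\hookrightarrow\bigoplus_{v\in S}\Br_1\mathcal{X}_{k_v}$, which holds because $\mathcal{X}(k_v)\neq\emptyset$ for $v\in S$ (a hypothesis implicit in the statement, needed for $\beh$ to be defined) — while $[(0,\widetilde g)]$ is the image of $\beta$ under the compact-support connecting map $\bigoplus_{v\in S}\Br k_v\to H^3_c(U,\G_m)$, i.e.\ the projection to $\operatorname{coker}\bigl(\Br U\to\bigoplus_{v\in S}\Br k_v\bigr)$. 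Postcomposing with $H^3_c(U,\G_m)\cong\Q/\Z$ then gives $\operatorname{inv}\bigl(w_{\mathcal{X}*}(\xi)\bigr)=\sum_{v\in S}\operatorname{inv}_v(\beta_v)=\beh(\bar\xi)$, which is the asserted commutativity.

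The main obstacle is bookkeeping rather than a genuine difficulty: one must match the three honest short exact sequences of complexes with \eqref{Diagram of Brauer groups to which we can apply the snake lemma} compatibly with every compact-support connecting map and track signs carefully, so as to land on $\beh$ and not $-\beh$. A more abstract alternative, which I would at least indicate, is to view the whole configuration as the morphism of compact-support distinguished triangles induced by $w_{\mathcal{X}}\colon KD'(\mathcal{X})\to\G_m[2]$ and to invoke the standard compatibility of connecting homomorphisms in such a $3\times3$ arrangement; this avoids the explicit cocycles at the price of having to pin down the same sign conventions via the octahedral axiom.
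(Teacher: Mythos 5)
Your proposal is correct and follows essentially the same route as the paper: both identify the compact-support connecting map $w_{\mathcal{X}*}$ with the snake-lemma boundary of the diagram \eqref{Diagram of Brauer groups to which we can apply the snake lemma} via the mapping-cone description of $R\Gamma_c(U,\cdot)$, and then conclude by Lemma \ref{The map beh coming from the snake lemma}. The paper packages the comparison as a zigzag of morphisms of distinguished triangles obtained by truncation where you run an explicit cocycle chase, but the content is the same.
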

\begin{proof}
This is basically contained in the proof of \cite[Prop. 3.3]{Harari2008Local-global}, we will expand the argument.

The distinguished triangle \eqref{The fundamental triangle of KD} induces the following distinguished triangle 
\begin{equation}
    R\Gamma_c(U, \G_m)[1] \to R\Gamma_c(U, KD(\mathcal{X})) \to  R\Gamma_c(U, KD'(\mathcal{X})) \xrightarrow{w_{\mathcal{X}*}} R\Gamma_c(U, \G_m)[2].
    \label{Triangle in compactly supported cohomology}
\end{equation}

Consider the following sequence of complexes, where the first row has cohomological degree $1$
\begin{equation*}
\begin{tikzcd}
	{H^2(U, \G_m)} & {H^1(U, KD(\mathcal{X}))} & {H^1(U, KD'(\mathcal{X}))} \\
	{\bigoplus_{v \in S}H^2(k_v, \G_m)} & {\bigoplus_{v \in S} H^1(k_v, j_v^*KD(\mathcal{X}))} & {\bigoplus_{v \in S} H^1(k_v, j_v^*KD'(\mathcal{X}))} 
	\arrow[from=1-1, to=2-1]
	\arrow[from=1-2, to=2-2]
	\arrow[from=1-3, to=2-3]
	\arrow[from=1-1, to=1-2]
	\arrow[from=1-2, to=1-3]
	\arrow[from=2-1, to=2-2]
	\arrow[from=2-2, to=2-3]
\end{tikzcd}
\end{equation*}
Using Lemma \ref{Cohomology of KD and the Brauer group} we can identify this with the short exact sequence of complexes \eqref{Diagram of Brauer groups to which we can apply the snake lemma}. It is in particular a short exact sequence of complexes and thus induces a distinguished triangle $T$ in $\mathcal{D}(\text{Ab})$.

The complex $R\Gamma_c(U, \cdot)$ is by definition a shift of the mapping cone of $R \Gamma(U, \cdot) \to \bigoplus_{v \in S} R \Gamma(k_v, j_v^* \cdot)$. By truncating on both sides we thus get a zigzag of maps from the triangle \eqref{Triangle in compactly supported cohomology} to the triangle $T$ in $D(\text{Ab})$, inducing a zigzag of maps between the corresponding long exact sequences.

We now note that the map $H_c^1(U, KD'(\mathcal{X})) \to \Beh(\mathcal{X})$ is the one induced by this zigzag in cohomology. Similarly, the isomorphism $H^3_c(U, \G_m) \cong \Q/\Z$ is by definition \cite[Prop. II.2.6]{Milne2006Duality} given by composing the isomorphism $H^3_c(U, \G_m) \cong \text{coker}(\Br U \to \oplus_v \Br k_v)$ coming from the zigzag with the isomorphism $\text{coker}(\Br U \to \oplus_v \Br k_v) \cong \Q/\Z: (\alpha_v)_v \to \sum_v \text{inv}_v(\alpha_v)$.

The boundary map in the long exact sequence coming from he triangle \eqref{Triangle in compactly supported cohomology}, respectively the triangle $T$, is $w_{\mathcal{X}*}$, respectively the one coming from the snake lemma. The proposition then follows from Lemma \ref{The map beh coming from the snake lemma}.
\end{proof}
We can now prove Theorem \ref{The elementary obstruction is the only obstruction}.(\ref{The Brauer-Manin obstruction coming from beh is the only one}).
\begin{theorem}
Let $U$ be Dedekind scheme whose fraction field $k$ is a global field and such that at least one non-archimedean place of $k$ does not define a point of $U$.

Let $\mathcal{X} \to U$ be a relative stacky curve whose generic fiber $X$ has genus $g < 1$. If $\mathcal{X}(\A_U)^{\Beh(\mathcal{X})} \neq \emptyset$ then $\mathcal{X}(U) \neq \emptyset$.
\label{The locally constant algebras give the only obstruction}
\end{theorem}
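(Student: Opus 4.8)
The plan is to reduce the statement to the vanishing of the elementary obstruction $w_{\mathcal{X}}$ and then apply Theorem \ref{The elemantary obstruction is the only obstruction if genus < 1}. So I fix $(x_v)_v \in \mathcal{X}(\A_U)^{\Beh(\mathcal{X})}$ and aim to prove $w_{\mathcal{X}} = 0 \in \operatorname{Ext}^2_U(KD'(\mathcal{X}),\G_m)$. Two consequences of this choice are used. First, $\mathcal{X}(\A_U) \neq \emptyset$, so $X(k_v) \neq \emptyset$ for $v \in S$ and $\mathcal{X}(\mathcal{O}_v) \neq \emptyset$ for $v \notin S$; by Lemma \ref{Functoriality of the elementary obstruction} this gives $w_{X_{k_v}} = 0$ and $w_{\mathcal{X}_{\mathcal{O}_v}} = 0$ respectively, i.e. $w_{\mathcal{X}}$ is everywhere locally trivial. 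Second, orthogonality to $\Beh(\mathcal{X})$ says precisely that the homomorphism $\beh$ of \eqref{Definition of the map beh} attached to $(x_v)_{v \in S}$ is the zero map.

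Recall from Lemma \ref{R pi G_m of a stacky curve} that $KD'(\mathcal{X}) = R^1\pi_*\G_m$ is a locally constant sheaf which is finitely generated with torsion subsheaf $\mathcal{T}$ of order invertible on $U$, and from Lemma \ref{Picard group of stacky curve with coarse genus 0} that the torsion-free quotient $KD'(\mathcal{X})/\mathcal{T}$ is $\Z$ with trivial $\pi_1(U)$-action. The hypothesis that $S$ contains a non-archimedean place gives $H^3(U,\G_m) = 0$; combining this with Lemma \ref{Cohomology of KD and the Brauer group} and the compactly supported exact sequence \eqref{Long exact sequence of compactly supported cohomology} identifies $\Beh(\mathcal{X})$ with the image of $H^1_c(U,KD'(\mathcal{X}))$ in $H^1(U,KD'(\mathcal{X})) \cong \Br_a\mathcal{X}$, as recorded in \eqref{Surjectivity of H^1_c to Beh}.

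Now the main point. By Proposition \ref{Relation between beh and the boundary map of the triangle}, the vanishing of $\beh$ forces the cup-product map $w_{\mathcal{X}*} : H^1_c(U,KD'(\mathcal{X})) \to H^3_c(U,\G_m) \cong \Q/\Z$ to be zero. Restricting along $H^1_c(U,\mathcal{T}) \hookrightarrow H^1_c(U,KD'(\mathcal{X}))$ and using functoriality of the cup product, $w_{\mathcal{X}}|_{\mathcal{T}} \in \operatorname{Ext}^2_U(\mathcal{T},\G_m)$ pairs trivially with all of $H^1_c(U,\mathcal{T})$; since $\mathcal{T}$ is finite with order invertible on $U$, Artin--Verdier duality then gives $w_{\mathcal{X}}|_{\mathcal{T}} = 0$. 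By the long exact $\operatorname{Ext}_U(-,\G_m)$-sequence of $0 \to \mathcal{T} \to KD'(\mathcal{X}) \to \Z \to 0$ (together with $\operatorname{Ext}^3_U(\Z,\G_m) = H^3(U,\G_m) = 0$), the class $w_{\mathcal{X}}$ is therefore pulled back from a class $\beta \in \operatorname{Ext}^2_U(\Z,\G_m) = \Br U$. To finish I would show $\beta = 0$: the everywhere local triviality of $w_{\mathcal{X}}$ controls the restrictions $\beta_v$, and, feeding in that the coarse curve $X_{\text{coarse}}$ is a genus-$0$ curve with points everywhere locally — hence $X_{\text{coarse}} \cong \Proj^1$ by Hasse--Minkowski — one checks $\beta_v = 0$ for every $v \in S$ using the local point $x_v$, whence $\beta = 0$ because $\Br U \hookrightarrow \bigoplus_{v \in S}\Br k_v$. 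Thus $w_{\mathcal{X}} = 0$, and Theorem \ref{The elemantary obstruction is the only obstruction if genus < 1} yields $\mathcal{X}(U) \neq \emptyset$.

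The hardest part is exactly this last step: the free summand $\Z$ of $KD'(\mathcal{X})$ is invisible to the pairing with $H^1_c(U,KD'(\mathcal{X}))$ (its contribution there is torsion-free, whereas $\Br U$ is torsion), so $\beta$ cannot be killed by duality alone and one must invoke the Hasse principle for the coarse conic. For signature $(0)$ this is the entire argument (there $\Beh(\mathcal{X}) = 0$ and the hypothesis just says $\mathcal{X}(\A_U) \neq \emptyset$), and for signatures other than $(0)$ and $(0;n,n)$ one does not even need the hypothesis by Theorem \ref{Only stacky curves with signature not equal to (0;n, n) can have an elementary obstruction}; the genuinely new content is confined to signature $(0;n,n)$, where $\mathcal{T} \cong \Pic^0 X_{k_s}$ is cyclic. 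Making the Poitou--Tate/Artin--Verdier formalism precise for the non-torsion complex $KD'(\mathcal{X})$ over the arithmetic base $U$, rather than over a field as in \cite[Cor.~3.6]{Harari2013DescentOpen}, is the main technical chore.
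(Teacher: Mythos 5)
Your overall skeleton coincides with the paper's: deduce from $\mathcal{X}(\A_U)^{\Beh(\mathcal{X})}\neq\emptyset$ that $\beh=0$, apply Proposition \ref{Relation between beh and the boundary map of the triangle} to conclude that $w_{\mathcal{X}*}:H^1_c(U,KD'(\mathcal{X}))\to H^3_c(U,\G_m)$ vanishes, and finish with Theorem \ref{The elemantary obstruction is the only obstruction if genus < 1}. The divergence, and the genuine gap, is in how you pass from $w_{\mathcal{X}*}=0$ to $w_{\mathcal{X}}=0$. Your guiding premise --- that the free summand $\Z$ of $KD'(\mathcal{X})$ is ``invisible'' to the duality pairing because its contribution to $H^1_c$ is torsion-free while $\Br U$ is torsion, so that a residual class $\beta\in\Br U$ survives and must be killed by a separate Hasse-principle argument --- is false. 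The duality theorem \cite[Thm. II.3.1]{Milne2006Duality} is stated for $\Z$-constructible (locally constant \emph{finitely generated}) sheaves, which is exactly what $KD'(\mathcal{X})=R^1\pi_*\G_m$ is by Lemma \ref{R pi G_m of a stacky curve}; in the degree relevant here it identifies $\text{Ext}^2_U(F,\G_m)$ with $H^1_c(U,F)^{\vee}$ with no completion issues, so $w_{\mathcal{X}*}=0$ forces $w_{\mathcal{X}}=0$ outright. Concretely, for $F=\Z$ one has $\text{Ext}^2_U(\Z,\G_m)=\Br U\hookrightarrow\bigoplus_{v\in S}\Br k_v$ and $H^1_c(U,\Z)\cong\text{coker}\bigl(\Z\to\bigoplus_{v\in S}H^0(k_v,\Z)\bigr)$, the pairing being $\bigl((n_v)_v,\alpha\bigr)\mapsto\sum_{v\in S}n_v\,\text{inv}_v(\alpha_v)$; taking $(n_v)_v$ supported at a single place shows this detects every nonzero element of $\Br U$. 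A finitely generated free group can perfectly well have a torsion group as its $\Q/\Z$-dual. The paper's proof is therefore a one-line application of the perfectness of the pairing \eqref{Perfect pairing in etale cohomology}, with no d\'evissage and no appeal to the coarse conic.

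The workaround you sketch in place of this is itself incomplete. The class $\beta\in\Br U=\text{Ext}^2_U(\Z,\G_m)$ lifting $w_{\mathcal{X}}$ is only well-defined modulo the image of $\text{Ext}^1_U(\mathcal{T},\G_m)$, so ``showing $\beta=0$'' is not the right target; you need the image of $\beta$ in $\text{Ext}^2_U(KD'(\mathcal{X}),\G_m)$ to vanish. More seriously, the local triviality $w_{X_{k_v}}=0$ obtained from the point $x_v$ only says that $\beta_v$ dies in $\text{Ext}^2_{k_v}(KD'(X_{k_v}),\G_m)$, i.e. that $\beta_v$ lies in the image of $\text{Ext}^1_{k_v}(\mathcal{T},\G_m)$ --- it does not give $\beta_v=0$ in $\Br k_v$, and the step ``one checks $\beta_v=0$ for every $v\in S$ using the local point $x_v$'' is asserted without an argument; nor is it explained how the isomorphism $X_{\text{coarse}}\cong\Proj^1$ would enter. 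These gaps might be repairable, but the repair amounts to reproving the case $F=\Z$ of the very duality statement you set out to avoid, so the detour buys nothing.
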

\begin{proof}
The complex $KD'(\mathcal{X})$ is a locally constant finitely generated sheaf due to Lemma \ref{R pi G_m of a stacky curve} so the following pairing is perfect by \cite[Thm. II.3.1]{Milne2006Duality}
\begin{equation}
    H^1_c(U, KD'(\mathcal{X})) \times \text{Ext}^2(KD'(\mathcal{X}), \G_m) \to H^3_c(U, \G_m) \cong \Q/\Z: (\alpha, w) \to w_*(\alpha).
    \label{Perfect pairing in etale cohomology}
\end{equation}

The assumption $\mathcal{X}(\A_U)^{\Beh(\mathcal{X})} \neq \emptyset$ implies that $\beh = 0$ and thus by Lemma \ref{Relation between beh and the boundary map of the triangle} that the map $w_{\mathcal{X} *}:H^1_c(U, KD'(\mathcal{X})) \to H^3_c(U, \G_m)$ is zero. The fact that the pairing \eqref{Perfect pairing in etale cohomology} is perfect implies that $w_{\mathcal{X}} = 0$. The theorem now follows from Theorem \ref{The elemantary obstruction is the only obstruction if genus < 1}.
\end{proof}
\section{Genus \texorpdfstring{$\frac{1}{2}$}{1/2} curves failing the Hasse principle}
Let $X$ be a genus $\frac{1}{2}$ tame stacky curve over a global field $k$. It follows from the formula \eqref{Genus of stacky curve} that it has signature $(0; 2,2)$. Assume that $X(\A_k) \neq \emptyset$, then $X_{\text{coarse}}(\A_k) \neq \emptyset$ and thus $X_{\text{coarse}} \cong \Proj^1_k$. The stacky locus $\mathcal{P} \subset \Proj^1$ is defined by a binary quadratic form $f\in k[X,Y]$. It follows from \cite[Lemma 5.3.10]{Voight2022Canonical} that $X \cong \Proj^1_k[\sqrt{f}]$ where $ \Proj^1_k[\sqrt{f}]$ is the stack of square-roots of $f$ \cite[Def. 2.2.1]{Cadman2007Tangency}. We may thus assume that $X = \Proj^1_k[\sqrt{f}]$. 

This stack has a universal cover given by $Y := \{Z^2 = f(X,Y) \}$ with the morphism $Y \to X$ being given by $[x:y:z] \to [x:y]$ on coarse moduli spaces, see Example \ref{The universal cover there are at most two stacky points} for a computation of this universal cover. This is a $\mu_2$-torsor, denote its class by $[Y/X] \in H^1(X, \mu_2)$. The Brauer group $\Br X$ is generated by $\Br k$ and the cup products $d \cup [Y/X] \in \Br X$ for $d \in k^{\times}/k^{\times 2} \cong H^1(k, \Z/2 \Z)$ by Proposition \ref{Brauer group is generated by cyclic algebras}.

Assume that $f = a x^2 + bxy + c y^2 \in \mathcal{O}_k[X,Y]$, this can always be achieved by a projective linear transformation, and let $q := b^2 - 4ac$ be the discriminant of $f$. We will now consider the integral model of $X$ given by the root stack $\mathcal{X} := \Proj_{\mathcal{O}_k[\frac{1}{2q}]}^1[\sqrt{f}]$. This is a smooth over $\mathcal{O}_k[\frac{1}{2q}]$ because $\{f = 0 \} \subset \Proj^1_{\mathcal{O}_k[\frac{1}{2q}]}$ is smooth over $\mathcal{O}_k[\frac{1}{2q}]$. Let $U := \{f \neq 0\} \subset X$ be the maximal open subscheme. It follows from the definition of a root stack that $\mathcal{X}(\mathcal{O}_{k}) \cap U(k)$ is given by those $[x:y] \in U(k)$ such that $v(f(x,y))$ is even for all finite places $v \nmid 2q$ and that we have an analogous description for $\mathcal{O}_v$-points. We have $\mathcal{X}(\mathcal{O}_v) \neq \emptyset$ for all finite places $v \nmid 2q$ because $f$ is smooth over $\mathcal{O}_v$ so we can choose $x,y \in \mathcal{O}_v$ such that $v(f(x,y)) = 0$. For the other places $v$ we have $X(k_v) \supset U(k_v) \neq \emptyset$.

We now specialize to the case $k = \Q$ and will compute when there is an elementary obstruction. 
\begin{lemma}
We have 
\begin{equation*}
    \Beh(\mathcal{X}) \cong \Big\{d \in \Z[\frac{1}{2q}]^{\times}/\{1,q\} \cdot \Z[\frac{1}{2q}]^{\times 2} :d \in \{1, q\} \Q_p^{\times 2} \emph{ for } p \mid 2q \emph{ and } d > 0 \emph{ if } q > 0\Big\}.
\end{equation*}
\end{lemma}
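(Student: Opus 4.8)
The plan is to compute the finite group $\Br_a \mathcal{X}$ together with all the restriction maps $\Br_a \mathcal{X} \to \Br_a X_{k_v}$ for $v \in S$, and then read off their common kernel, since by definition $\Beh(\mathcal{X}) = \ker\bigl(\Br_a \mathcal{X} \to \prod_{v \in S} \Br_a X_{k_v}\bigr)$. Here $U = \Spec \Z[\frac{1}{2q}]$, so that $S = \{\infty\} \cup \{ p : p \mid 2q\}$, and $\mathcal{X} = \Proj^1_{\Z[\frac{1}{2q}]}[\sqrt{f}]$.

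For the first step I would compute $\Br_a \mathcal{X}$ cohomologically. Since $2 \in S$ is non-archimedean we have $H^3(U, \G_m) = 0$ by \cite[Rem. II.2.2]{Milne2006Duality}, so Lemma~\ref{Cohomology of KD and the Brauer group} identifies $\Br_a \mathcal{X}$ with $H^1(U, KD'(\mathcal{X}))$, which by Lemma~\ref{R pi G_m of a stacky curve} equals $H^1(U, R^1\pi_* \G_m) = H^1(U, \Pic X_{k_s})$ for the locally constant sheaf $\Pic X_{k_s}$. As $X$ has signature $(0; 2,2)$, Lemma~\ref{Picard group of stacky curve with coarse genus 0} gives $\Pic^0 X_{k_s} \cong \Z/2\Z$ with generator $[P'] - [Q']$ for the two geometric stacky points $P', Q'$; this generator is fixed by any Galois element swapping $P'$ and $Q'$, so the Galois action is trivial and the sheaf is $\mu_2$. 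Now Lemma~\ref{Computation of H^1(U, Pic)} provides an exact sequence $0 \to \Z/\frac{d_{X_s}}{d_X}\Z \to H^1(U, \mu_2) \to H^1(U, \Pic X_{k_s}) \to 0$ whose left-hand term is generated by the cocycle $\sigma \mapsto \sigma D - D$ for a divisor $D$ on $X_{k_s}$ of degree $\frac{1}{2}$, while Kummer theory together with $\Pic \Z[\frac{1}{2q}] = 0$ gives $H^1(U, \mu_2) = \Z[\frac{1}{2q}]^{\times}/\Z[\frac{1}{2q}]^{\times 2}$.

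The crux is to recognise that left-hand subgroup as the one generated by $q$. Taking $D = [P']$, the cocycle $\sigma \mapsto \sigma[P'] - [P']$ is trivial when $\sigma$ fixes $P'$ and is the nontrivial element of $\Z/2\Z$ when $\sigma$ swaps $P'$ and $Q'$, hence it is the quadratic character cutting out the splitting field $\Q(\sqrt{q})$ of $f$; this extension is unramified outside $2q\infty$, so under the Kummer identification the character corresponds to the class of $q$. Therefore $\Br_a \mathcal{X} \cong \Z[\frac{1}{2q}]^{\times}/\{1,q\} \cdot \Z[\frac{1}{2q}]^{\times 2}$, and one verifies that the class attached to a unit $d$ is represented by $d \cup [\mathcal{Y}/\mathcal{X}]$ for the universal cover $\mathcal{Y} = \{Z^2 = f\}$, so that this matches the generators of Proposition~\ref{Brauer group is generated by cyclic algebras} and Example~\ref{The universal cover there are at most two stacky points}. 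Running the identical computation over each completion $k_v$, $v \in S$, and using functoriality of the exact sequence of Lemma~\ref{Computation of H^1(U, Pic)}, gives $\Br_a X_{k_v} \cong k_v^{\times}/\{1,q\} \cdot k_v^{\times 2}$ with restriction map induced by the inclusion $\Z[\frac{1}{2q}]^{\times} \hookrightarrow k_v^{\times}$; at $v = \infty$ the local group $\R^{\times}/\{1,q\} \cdot \R^{\times 2}$ is trivial if $q < 0$ and is $\Z/2\Z$, detecting the sign, if $q > 0$.

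Assembling these, $\Beh(\mathcal{X})$ is the set of $d \in \Z[\frac{1}{2q}]^{\times}/\{1,q\} \cdot \Z[\frac{1}{2q}]^{\times 2}$ mapping to $0$ in $k_v^{\times}/\{1,q\} \cdot k_v^{\times 2}$ for all $v \in S$, that is, those $d$ with $d \in \{1,q\}\Q_p^{\times 2}$ for every $p \mid 2q$ and $d \in \{1,q\}\R^{\times 2}$, the last condition being vacuous if $q < 0$ and equivalent to $d > 0$ if $q > 0$; this is exactly the asserted description. The step I expect to cost the most work is the bookkeeping that threads it all together: matching the abstract cocycle $\sigma \mapsto \sigma D - D$ with the concrete unit $q$ through the Galois action on the stacky points, checking that the several applications of Lemma~\ref{Computation of H^1(U, Pic)} over $U$ and over the $k_v$ are compatible so that the restriction maps are the evident inclusions of units, and tracking the asymmetric role of $q$ versus $1$ that produces the lopsided-looking local conditions — in particular the elementary but load-bearing fact that $\sqrt{d} \in \Q(\sqrt{q})$ if and only if $d \in \{1,q\}\Q^{\times 2}$, which remains valid locally even at $p = 2$ since there $\mu_2 \cong \Z/2\Z$.
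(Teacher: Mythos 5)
Your proposal is correct and follows essentially the same route as the paper's proof: identify $\Beh(\mathcal{X})$ with the kernel of $H^1(U, \Pic X_{\overline{\Q}}) \to \bigoplus_{v \mid 2q\infty} H^1(\Q_v, \Pic X_{\overline{\Q}})$ using $H^3(U,\G_m)=0$, compute both sides via the degree sequence of Lemma \ref{Computation of H^1(U, Pic)} together with Kummer theory, and identify the cocycle $\sigma \mapsto \sigma D - D$ with the class of $q$ as the quadratic character of the splitting field of $f$. The paper's own proof is simply a terser version of this same computation.
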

\begin{proof}
The Dedekind scheme $U := \Spec \Z[\frac{1}{2q}]$ does not contain the non-archimedean prime $2$ so $H^3(U, \G_m) = 0$ by \cite[Rem. II.2.2]{Milne2006Duality} so by Lemma \ref{Cohomology of KD and the Brauer group}
\begin{equation}
    \Beh(\mathcal{X}) \cong \ker \Big(H^1(U, \Pic X_{\overline{\Q}}) \to \bigoplus_{v \mid 2q \infty} H^1(\Q_v, \Pic X_{\overline{\Q}})\Big).
    \label{Beh in terms of H^1(U, Pic)}
\end{equation}

It follows from Lemma \ref{Picard group of stacky curve with coarse genus 0} that $\Pic^0 X_{\overline{\Q}} \cong \Z/2\Z$ with the trivial $\Gal(\overline{\Q}/\Q)$-action. We deduce via Lemma \ref{Computation of H^1(U, Pic)} that 
\begin{equation*}
    H^1(U, \Pic_{k_s}) \cong H^1(U, \Z/2\Z)/ \varphi \cong (\Z[\frac{1}{2q}]^{\times}/ \Z[\frac{1}{2q}]^{\times 2})/q.
    \label{H^1(U, Pic) as a quotient in the genus 1/2 case}
\end{equation*}
Where $\varphi: \pi_1(U) \to \Z/2\Z$ is the cocycle which sends $\sigma$ to $0$ if $\sigma$ fixes the roots of $f(x,y)$ and to $1$ otherwise. Its image under the isomorphism $H^1(U, \Z/2\Z) \cong {\times}/ \Z[\frac{1}{2q}]^{\times 2}$ is $q$.

Completely analogously we get $ H^1(\Q_v, \Pic_{k_s}) \cong \Q_v^{\times}/ \{1, q\} \cdot \Q_v^{\times 2}$ for $v \mid 2q \infty$. The lemma then follows by using these descriptions in \eqref{Beh in terms of H^1(U, Pic)}.
\end{proof}

An algebra corresponding to $d \in \Beh(\mathcal{X})$ is by Proposition \ref{Brauer group is generated by cyclic algebras} given by the cup product $d \cup [Y/X]$.

Recall that to compute the Brauer obstruction of $\Beh(\mathcal{X})$ we have to compute the map $\beh$ defined in \eqref{Definition of the map beh}. Recall \cite[\S 5.2.1]{Serre1973Arithmetic} that to each binary quadratic form $f$ and place $v$ we can assign its $\epsilon$-\emph{invariant} $\epsilon_v := \epsilon_v(f) \in \mu_2$. We will also let the evaluation map $\text{ev}_v(d; \cdot)$ take value in $\mu_2$ instead of the isomorphic group $\Z/2\Z$.

\begin{lemma}
Let $d \in \Beh(\mathcal{X})$. Let $v \mid 2q \infty$ be a place.
\begin{enumerate}
    \item We have $\emph{ev}_v(d ;P_v) = 1$ for all $P_v \in X(\Q_p)$ if $d \in \Q_v^{\times 2}$.
    \item If $d \in q \Q_v^{\times 2}$ then we have $\emph{ev}_v(d; P_v) = \epsilon_v$
\end{enumerate}
This implies that $\beh(d) = \sum_{v \mid 2q \infty, d \in q \Q^{\times 2}_v} \epsilon_v$.
\label{Local computations}
\end{lemma}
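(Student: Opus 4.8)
Here is the plan.

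\smallskip

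The first statement is formal: if $d \in \Q_v^{\times 2}$ then the image of $d$ under $\Q^\times/\Q^{\times 2} \to \Q_v^\times/\Q_v^{\times 2} = H^1(\Q_v,\mu_2)$ is trivial, so the pullback to $X_{\Q_v}$ of the algebra $d \cup [Y/X] \in \Br X$ vanishes, whence $\mathrm{ev}_v(d;P_v) = \mathrm{inv}_v\big((d\cup[Y/X])(P_v)\big) = 1$ for every $P_v \in X(\Q_v)$. For the second statement I would first reduce to points of the dense open $U = \{f\neq 0\}\subset X$: the evaluation map $X(\Q_v)\to\mu_2$ attached to $d\cup[Y/X]$ is locally constant, and $U(\Q_v)$ is dense in $X(\Q_v)$ (it is $X(\Q_v)$ minus finitely many stacky points on a smooth $v$-adic analytic curve; in fact, as discussed in the excerpt $U(\Q_v)\neq\emptyset$ always), so it suffices to compute $\mathrm{ev}_v(d;P_v)$ for $P_v = [x_0:y_0]\in U(\Q_v)$. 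On $U$ the class $[Y/X]$ is represented by the $\mu_2$-torsor $\{z^2 = f(x,y)\}$, so its pullback along $P_v$ is the class of $f(x_0,y_0)$ in $H^1(\Q_v,\mu_2)=\Q_v^\times/\Q_v^{\times 2}$; since $d\equiv q$ there, this yields $\mathrm{ev}_v(d;P_v) = (q, f(x_0,y_0))_v$, the Hilbert symbol (equivalently, the class of the quaternion algebra $(q,f(x_0,y_0))$).

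The heart of the matter is then the identity $(q, f(x_0,y_0))_v = \epsilon_v(f)$, which in particular confirms this value is independent of $P_v$, consistent with $d\cup[Y/X]$ being a locally constant algebra. We may assume $q\notin\Q_v^{\times 2}$: if $q$ is a square at $v$ then $d\in q\Q_v^{\times 2}=\Q_v^{\times 2}$ so the first part applies, and also $\epsilon_v(f)=(a,-aq)_v=(a,-a)_v=1$ by the computation below, so the claim is vacuous. Assuming $a\neq 0$ (which we may), over $L := \Q_v(\sqrt q)$ the form factors as $f(x,y) = a\,(x-\alpha y)(x-\bar\alpha y)$ with $\alpha = \tfrac{-b+\sqrt q}{2a}$ and $\bar\alpha$ its conjugate, so $f(x_0,y_0) = a\cdot N_{L/\Q_v}(x_0-\alpha y_0)$ with $x_0-\alpha y_0\in L^\times$. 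Hence $f(x_0,y_0)\equiv a$ modulo $N_{L/\Q_v}(L^\times)$, and since $(q,\cdot)_v$ kills $N_{L/\Q_v}(L^\times)$ we get $(q,f(x_0,y_0))_v = (q,a)_v$. Finally, completing the square diagonalizes $f$ over $\Q_v$ as $\langle a, -aq\rangle$, so $\epsilon_v(f) = (a,-aq)_v = (a,-a)_v(a,q)_v = (a,q)_v = (q,a)_v$, which proves the identity. (Alternatively one can avoid the norm argument and deduce the same identity from Serre's isotropy criterion for the ternary form $\langle a,-aq,-f(x_0,y_0)\rangle$, which is isotropic since $f$ represents $f(x_0,y_0)$, followed by Hilbert-symbol bookkeeping.)

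The displayed formula then follows. For $d\in\Beh(\mathcal{X})$ one has $\beh(d) = \sum_{v\in S}\mathrm{inv}_v\big((d\cup[Y/X])(x_v)\big)$ with $S = \{v\mid 2q\infty\}$ and arbitrary $x_v\in X(\Q_v)$, and by the preceding computation of $\Beh(\mathcal{X})$ each such $d$ lies in $\{1,q\}\Q_v^{\times 2}$ for every $v\in S$. Parts (1) and (2) evaluate each local term as $1$ or $\epsilon_v$ according to whether $d\in\Q_v^{\times 2}$ or $d\in q\Q_v^{\times 2}$, and in the overlap ($q\in\Q_v^{\times 2}$) both read $1=\epsilon_v$, so $\beh(d) = \sum_{v\mid 2q\infty,\,d\in q\Q_v^{\times 2}}\epsilon_v$ unambiguously. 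The only points that require care are the identification of the pullback of $[Y/X]$ with $f(x_0,y_0)$ and the ensuing Hilbert-symbol identity together with the (minor) density argument over stacky points; everything else is routine.
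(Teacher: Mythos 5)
Your proposal is correct and follows essentially the same route as the paper: reduce to $U(\Q_v)$ by local constancy, identify the evaluation with the Hilbert symbol $(d, f(x_0,y_0))_v$, and then invoke the representation criterion for binary forms to get $\epsilon_v$. The only difference is that you prove the identity $(q, f(x_0,y_0))_v = \epsilon_v$ by hand via the norm-form factorization over $\Q_v(\sqrt{q})$, whereas the paper simply cites the Corollary of Theorem~6 in \cite[\S 5.2.2]{Serre1973Arithmetic} (modulo the sign convention on the discriminant), which is the alternative you yourself mention.
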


\begin{proof}
The evaluation map is locally constant so we can assume that $P_v \in U(k_v)$. We can thus use that $\alpha|_U = (d, f/X^2) \in \Br U$ is a quaternion algebra.

For $P_v = [x:y] \in U(\Q_v)$ we have $\text{ev}_v(d ;P_v) = (d, f(x,y)/x^2)_v$. If $d \in \Q_v^{\times 2}$ then $(d, f(x,y)/x^2)_v = 1$ so assume that $d \in q \Q_v^{\times 2}$. In that case \cite[\S 5.2.2: Cor. of Thm. 6]{Serre1973Arithmetic} implies that $(d, f(x,y)/x^2)_v = (q, f(x,y)/x^2)_v = \epsilon_v$ (note that the convention for the discriminant differs by a sign).

The last statement follows from the definition of $\beh$.
\end{proof}

The considerations above and Theorem \ref{The elementary obstruction is the only obstruction} immediately lead to
\begin{theorem}
Let $f \in \Z[X,Y]$ be a binary quadratic form with discriminant $q$ and $\epsilon$-invariants $\epsilon_v$. The root stack $\mathcal{X} = \Proj^1_{\Z[\frac{1}{2q}]}[\sqrt{f}]$ always has local integral points everywhere and has a $\Z[\frac{1}{2q}]$-point if and only if there exists no $d \mid 2q$ such that:
\begin{enumerate}
    \item For all places $v \mid 2q \infty$ one has $d \in \Q_v^{\times 2}$ or $d \in q \Q_v^{\times 2}$.
    \item We have $\prod_{v \mid 2 q \infty, d \not \in \Q_v^{\times 2}} \epsilon_v = \prod_{v \mid 2 q \infty, d \in \Q_v^{\times 2}} \epsilon_v = -1 $.
\end{enumerate}
\label{Criterion for a Brauer-Manin obstruction coming from a single algebra}
\end{theorem}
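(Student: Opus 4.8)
Proof plan for Theorem \ref{Criterion for a Brauer-Manin obstruction coming from a single algebra}.

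The plan is to obtain the criterion by combining Theorem \ref{The elementary obstruction is the only obstruction}.(\ref{The Brauer-Manin obstruction coming from beh is the only one}) with the two explicit computations already made in this section: the description of $\Beh(\mathcal{X})$ and the formula for the map $\beh$ in Lemma \ref{Local computations}.

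First I would reduce the statement to the vanishing of $\beh$. As observed at the start of the section, $\mathcal{X}(\A_U) \neq \emptyset$ always, and for $\alpha \in \Beh(\mathcal{X})$ the local evaluation $\mathrm{ev}_v(\alpha; \cdot)$ vanishes at every $v \notin S$ and is constant at every $v \in S$; hence the Brauer--Manin pairing of $\mathcal{X}(\A_U)$ against $\Beh(\mathcal{X})$ does not depend on the chosen adelic point and is computed by $\beh$, so $\mathcal{X}(\A_U)^{\Beh(\mathcal{X})}$ equals $\mathcal{X}(\A_U)$ when $\beh = 0$ and is empty otherwise. Because $U = \Spec \Z[\frac{1}{2q}]$ omits the non-archimedean place $2$, the hypotheses of Theorem \ref{The elementary obstruction is the only obstruction}.(\ref{The Brauer-Manin obstruction coming from beh is the only one}) are met and it gives $\mathcal{X}(\A_U)^{\Beh(\mathcal{X})} \neq \emptyset \Rightarrow \mathcal{X}(U) \neq \emptyset$; conversely, the image of $\mathcal{X}(U)$ lies in the Brauer--Manin set, hence is orthogonal to $\Beh(\mathcal{X})$ by the reciprocity law $\sum_v \mathrm{inv}_v = 0$ on $\Br \Q$. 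Thus $\mathcal{X}(U) \neq \emptyset$ if and only if $\beh$ vanishes on $\Beh(\mathcal{X})$, i.e.\ if and only if there is no $d \in \Beh(\mathcal{X})$ with $\beh(d) = -1 \in \mu_2$.

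Next I would feed in the explicit descriptions. The computation of $\Beh(\mathcal{X})$ above exhibits its classes as represented by divisors $d \mid 2q$ satisfying condition (1) of the statement: the archimedean part of (1) is exactly the sign condition ``$d > 0$ if $q > 0$'' appearing there, and the relation $d \sim q/d$ modulo $\{1, q\}$ and squares removes the remaining ambiguity of representatives; this is routine finite-group bookkeeping. For such a $d$, Lemma \ref{Local computations} gives $\beh(d) = \prod_{v \mid 2q\infty,\ d \in q\Q_v^{\times 2}} \epsilon_v$. I would then compare the indexing set with $\{v \mid 2q\infty : d \notin \Q_v^{\times 2}\}$: at a place $v$ with $q \notin \Q_v^{\times 2}$ the cosets $\Q_v^{\times 2}$ and $q\Q_v^{\times 2}$ are disjoint, so by (1) the conditions $d \in q\Q_v^{\times 2}$ and $d \notin \Q_v^{\times 2}$ are equivalent; at a place $v$ with $q \in \Q_v^{\times 2}$ one has $\epsilon_v = (q, \cdot)_v = 1$ (as in the proof of Lemma \ref{Local computations}), so $v$ contributes trivially. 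Hence $\beh(d) = \prod_{v \mid 2q\infty,\ d \notin \Q_v^{\times 2}} \epsilon_v$. Since $\epsilon_v(f) = 1$ for $v \nmid 2q\infty$ (there $f$ reduces to a nondegenerate form) and $\prod_v \epsilon_v(f) = 1$ by Hilbert reciprocity, the sets $\{v \mid 2q\infty : d \notin \Q_v^{\times 2}\}$ and $\{v \mid 2q\infty : d \in \Q_v^{\times 2}\}$ partition $\{v \mid 2q\infty\}$ and their $\epsilon$-products multiply to $1$, hence coincide; therefore $\beh(d) = -1$ is precisely condition (2). Assembling these equivalences gives the theorem.

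The step I expect to be the main obstacle is the bookkeeping in the previous paragraph: first, verifying that the positive divisors $d \mid 2q$ obeying (1) really form a complete set of representatives for $\Beh(\mathcal{X})$, so that no class with a ``negative'' representative is lost; and second, tracking the degenerate places where $q$ becomes a local square, where one must use $\epsilon_v = 1$ and not merely $q\Q_v^{\times 2} = \Q_v^{\times 2}$. Everything else is a formal consequence of Theorem \ref{The elementary obstruction is the only obstruction} and the local computations already in place.
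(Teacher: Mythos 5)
Your proposal is correct and follows essentially the same route as the paper: reduce to the vanishing of $\beh$ via Theorem \ref{The locally constant algebras give the only obstruction} and the reciprocity law, then feed in the explicit description of $\Beh(\mathcal{X})$ and Lemma \ref{Local computations}, using the product formula $\prod_v \epsilon_v = 1$ together with $\epsilon_v = 1$ for $v \nmid 2q\infty$ to identify the two products in condition (2). The paper's own proof is just a terser version of exactly this assembly, so your more detailed bookkeeping (including the treatment of places where $q$ is a local square) is a faithful expansion rather than a different argument.
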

\begin{proof}
The equality $\prod_{v \mid 2 q \infty, d \not \in \Q_v^{\times 2}} \epsilon_v = \prod_{v \mid 2 q \infty, d \in \Q_v^{\times 2}} \epsilon_v$ follows from the product formula $\prod_v \epsilon_v = 1$ and the fact that $\epsilon_v = 1$ for $v \nmid 2 q \infty$ since $f$ has good reduction there.

Everything else follows from Lemma \ref{Local computations}, Theorem \ref{The locally constant algebras give the only obstruction} and the preceding discussion.
\end{proof}

\begin{example}
A simple example is $f = 3x^2 + 2xy + 5y^2$ for which $q = -56 = -7\cdot 8$. Completing the square we find $\epsilon_v = (3, 42)_v$, so $\epsilon_{\infty} = 1, \epsilon_{2} = \epsilon_{7} = -1$. We can thus apply Theorem \ref{Criterion for a Brauer-Manin obstruction coming from a single algebra} with $d := -7$. Indeed, $q < 0$, $d \in \Z_2^{\times 2}$ and $d \in  - 56 \Z_7^{\times 2}$ by Hensel's lemma. We conclude that $\mathcal{X}(\Z[\frac{1}{14}]) = \emptyset$.
\end{example}

We can also reinterpret the example of Bhargava-Poonen \cite[Thm. 1]{Bhargava2020Stacky}. Note that unlike them we do not assume that $p$ is a square mod $q$ and that $a$ is a square modulo $p$ and a non-square modulo $q$.
\begin{example}
Let $p, q, r$ be primes congruent to $7 \pmod{8}$ such that $p$ and $q$ are squares modulo $r$. Let $f = a x^2 + b xy + c y^2$ be a positive definite binary quadratic form of discriminant $-pqr$ such that $a$ is a non-square modulo $r$. An example is $f = 3 x^2 + x y - 850 y^2$ with $p = 7, q= 47,r = 31$.

By completing the square one sees that $\epsilon_v = (a, a pqr)_v = (a, - pqr)_v$. Since $- pqr \equiv 1 \pmod{8}$. And due to quadratic reciprocity we get that $\epsilon_{\infty} = \epsilon_{2} = 1, \epsilon_{r} = -1$. We can then apply Theorem \ref{Criterion for a Brauer-Manin obstruction coming from a single algebra} to $d := -r$. We have $-r \in \Z_2^{\times 2}, \Z_{p}^{\times 2}, \Z_{q}^{\times 2}$ and $-r \in -pqr\Z_r^{\times 2}, - pqr \R^{\times 2}$. We conclude that $\mathcal{X}(\Z[\frac{1}{2pqr}]) = \emptyset$.
\end{example}
\bibliographystyle{alpha} 
\bibliography{references}
\end{document}